\pgfplotsset{compat=1.18}
\newtheorem{thm}{Theorem}[section]
\newtheorem{lem}[thm]{Lemma}
\newtheorem{prop}[thm]{Proposition}
\newtheorem{cor}[thm]{Corollary}
\newtheorem{conj}[thm]{Conjecture}
\theoremstyle{definition}
\newtheorem{alg}[thm]{Algorithm}
\theoremstyle{remark}
\newtheorem{rem}[thm]{Remark}
\newtheorem{exercise}[thm]{Exercise}
\DeclareMathOperator{\disc}{disc}
\DeclareMathOperator{\Stab}{Stab}
\DeclareMathOperator{\Cl}{Cl}
\newcommand{\GL}{\mathrm{GL}}
\newcommand{\SL}{\mathrm{SL}}
\newcommand{\PGL}{\mathrm{PGL}}
\newcommand{\QQ}{\mathbb{Q}}
\newcommand{\PP}{\mathbb{P}}
\newcommand{\RR}{\mathbb{R}}
\newcommand{\ZZ}{\mathbb{Z}}
\renewcommand{\aa}{\mathfrak{a}}
\newcommand{\pp}{\mathfrak{p}}
\newcommand{\F}{\mathcal{F}}
\renewcommand{\L}{\mathcal{L}}
\newcommand{\M}{\mathcal{M}}
\newcommand{\OO}{\mathcal{O}}
\renewcommand{\P}{\mathcal{P}}
\renewcommand{\S}{\mathcal{S}}
\newcommand{\T}{\mathcal{T}}
\newcommand{\W}{\mathcal{W}}
\newcommand{\Z}{\mathcal{Z}}
\newcommand{\RB}{\mathcal{RB}}
\newcommand{\DB}{\mathcal{DB}}
\newcommand{\SRT}{\mathcal{SRT}}
\newcommand{\ba}{\overline}
\newcommand{\bs}{\backslash}
\newcommand{\cross}{\times}
\newcommand{\textand}{\quad \text{and} \quad}
\newcommand{\textor}{\quad \text{or} \quad}
\renewcommand{\to}{\mathop{\rightarrow}\limits}
\newcommand{\size}[1]{\lvert #1 \rvert}
\let\left\mleft
\let\right\mright
\newcommand{\Size}[1]{\left\lvert #1 \right\rvert}
\newcommand{\floor}[1]{\left\lfloor #1 \right\rfloor}
\newcommand{\intsec}{\cap}
\newcommand{\union}{\cup}
\newcommand{\bigintsec}{\bigcap}
\newcommand{\isom}{\cong}
\newcommand{\<}{\left\langle}
\renewcommand{\>}{\right\rangle}
\newcommand{\ignore}[1]{}
\title{Raney Transducers\\and the Lowest Point of the $p$-Lagrange spectrum}
\author{CMU REU Spring 2024\\
  Brandon Dong, Soren Dupont, Evan M.\ O'Dorney, and W.\ Theo Waitkus 
}
\begin{document}
  
  \maketitle
  
  \begin{abstract}
    It is well known that the golden ratio $\phi$ is the ``most irrational'' number in the sense that its best rational approximations $s/t$ have error $\sim 1/(\sqrt{5} t^2)$ and this constant $\sqrt{5}$ is as low as possible. Given a prime $p$, how can we characterize the reals $x$ such that $x$ and $p x$ are both ``very irrational''?  This is tantamount to finding the lowest point of the \emph{$p$-Lagrange spectrum} $\mathcal{L}_p$ as previously defined by the third author. We describe an algorithm using Raney transducers that computes $\min \mathcal{L}_p$ if it terminates, which we conjecture it always does. We verify that $\min \mathcal{L}_p$ is the square root of a rational number for primes $p < 2000$. Mysteriously, the highest values of $\min \mathcal{L}_p$ occur for the Heegner primes $67$, $3$, and $163$, and for all $p$, the continued fractions of the corresponding very irrational numbers $x$ and $p x$ are in one of three symmetric relations.
  \end{abstract}
  
  \section{Introduction}
  We recall the classical notions of the Lagrange and Markoff spectra. If $\xi$ is an irrational real number, we define its \emph{Lagrange approximability}
  \begin{equation} \label{eq:lambda}
    \lambda(\xi) = \limsup_{\tfrac{s}{t} \in \QQ, \tfrac{s}{t} \to \xi} \frac{1}{t^2 \Size{\dfrac{s}{t} - \xi}} \quad \in \quad \RR \union \infty,
  \end{equation}
  and the \emph{Lagrange spectrum} $\L \subseteq \RR$ to be the set of real values attained by $\lambda(\cdot)$. One thinks of $\lambda(\xi)$ as the ease of approximation of $\xi$ by rationals. A classical result usually called Hurwitz's theorem (though it can justly be attributed to Markoff) states that $\lambda(\xi) \geq \sqrt{5}$, with equality when $\xi = \phi$ is the golden ratio, or one of its images under $\SL_2\ZZ$ (which are hence the ``most irrational'' numbers). Similarly, if $\xi \neq \xi'$ are two irrationals, then we define their \emph{Markoff approximability}
  \begin{equation} \label{eq:mu}
    \mu(\xi,\xi') = \sup_{(s,t) \in \ZZ^2 \bs \{0\}} \frac{\Size{\xi - \xi'}}{\Size{s - t\xi} \Size{s - t\xi'}} = \sup_{(s,t) \in \ZZ^2 \bs \{0\}} \frac{\sqrt{\disc f}}{\size{f(s,t)}} \quad \in \quad \RR \union \infty
  \end{equation}
  where $f(x,y) = a(x - y\xi)(x - y\xi')$ is the quadratic form with roots $\xi,\xi'$ (the scaling $a$ is arbitrary; it is often desirable for $f$ to have integer coefficients). The \emph{Markoff spectrum} $\M$ is then the set of real values of $\mu(\cdot,\cdot)$. This spectrum was first considered by Markoff \cite{MarkoffI} in the guise of infima of quadratic forms. The Lagrange and Markoff spectra each consist of an initial discrete segment below $3$, a mysterious fractal middle region which remains the topic of current research \cite{MoreiraGeometric,MoreiraMLIsNotClosed}, and \emph{Hall's ray} $[F,\infty)$ where $F = 4.5278\ldots$ was computed exactly by Fre\u{\i}man \cite{Freiman1975}, who also showed that $\L \subsetneq \M$ \cite{Freiman1968}. See \cite{CusickFlahive} for a comprehensive account of results up to 1989, while new connections continue to be unearthed (see \cite{AbeTwoColor}).
  
  In this paper, we fix a positive integer $n$ and introduce ``$n$-analogues'' of the Lagrange and Markoff spectra by inserting a factor $\gcd(t,n)$ in the numerators of the definitions of approximability \eqref{eq:lambda}--\eqref{eq:mu}, thus:
  \begin{align} 
    \lambda_n(\xi) &= \limsup_{\tfrac{s}{t} \in \QQ, \tfrac{s}{t} \to \xi} \frac{\gcd(t, n)}{t^2 \Size{\dfrac{s}{t} - \xi}}, & \L_n &= \{\lambda_n(\xi) \in \RR : \xi \in \RR\} \label{eq:lambda_n} \\
    \mu_n(\xi,\xi') &= \sup_{(s,t) \in \ZZ^2 \bs \{0\}} \frac{\gcd(t, n) \cdot \Size{\xi - \xi'}}{\Size{s - t\xi} \Size{s - t\xi'}}, & \M_n &= \{\lambda_n(\xi,\xi') \in \RR : \xi\neq \xi' \in \RR\} \label{eq:mu_n}
  \end{align}
  These notations were introduced by the third author in \cite{OConics} and shown to govern the intrinsic approximation of points on conics, generalizing previous work on the unit circle, which gives $\L_2$ \cite{KopetzkyApprox,KimSim,ChaIntrinsic}, and on the conic $x^2 + xy + y^2 = 1$, which gives $\L_3$ \cite{ChaEis}. The spectra $\M_n$ appear in the work of Schmidt \cite[p.~15]{SchMinII} by generalizing the work of Markoff on infima of binary quadratic forms.
  
  Classically, the approximabilities $\lambda(\xi)$, $\mu(\xi,\xi')$ are invariant under the action of $\SL_2\ZZ$ (indeed $\GL_2\ZZ$) by linear fractional transformations on the real projective line. Likewise, it is easy to see that $\lambda_n(\xi)$ and $\mu_n(\xi,\xi')$ are invariant under transformations by the congruence subgroup
  \[
    \Gamma^0(n) = \left\{\begin{bmatrix}
      a & b \\ c & d
    \end{bmatrix} \in \SL_2\ZZ : n \mid b\right\}
  \]
  (an $\SL_2\ZZ$-conjugate of the more familiar $\Gamma_0(n)$, where the divisibility condition is imposed on $c$ instead). More generally, Vulakh \cite{VulakhMarkov} defines a notion of Markoff spectrum for any Fuchsian subgroup of $\SL_2\RR$.
  
  It is natural to ask if various facts about $\L$ and $\M$ carry over to $\L_n$ and $\M_n$, and whether there are idiosyncratic behaviors for certain values of $n$. In this paper, we begin to answer these questions as regards the bottom of the spectrum, where we expect to find a countable discrete sequence of isolated points converging to the first limit point. We restrict to $n = p$ prime (a simplification, as the gcd in \eqref{eq:lambda_n}--\eqref{eq:mu_n} can then only take two values). Finding $\min \L_p$ can be thought of as computing a $\xi$ such that $\xi$ and $p\xi$ are both ``very irrational,'' that is, hard to approximate by rationals of small denominator.
  
  Schmidt \cite{SchMinI,SchMinII} computes the initial discrete segment of $\M_n$ for $n = 2,3,5,6$. Vulakh \cite{VulakhMarkov} does the same for $n = 13$ (Theorem 32) and claims that ``[t]he results obtained in the preceding sections can be used to find the discrete part of $\M_m$ in those cases [not already solved by Schmidt]'' \cite[p.~4090]{VulakhMarkov}. However, no general algorithm is given, nor is a general theorem stated on the structure of $\M_n$ for all $n$. In this paper, we begin to fill this gap. We show that $\L_n$ and $\M_n$ have a common minimum, and we give an algorithm that, if it terminates, computes $\min \L_p$ for a prime $p$ and verifies the following:
  \begin{conj}\label{conj:main}
    For all positive integers $n$, the lowest point $\min \L_n$ is an isolated point of $\L_n$ and is the irrational square root of a rational number.
  \end{conj}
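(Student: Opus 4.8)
The plan is to reduce $\min\L_n$ to a finite combinatorial optimization on the Raney transducer $T_n$, and then to read the three asserted properties --- attainment, the $\sqrt{\text{rational}}$ form, and isolation --- off the structure of an optimal cycle. Throughout we use the (already established) equality $\min\L_n=\min\M_n$. The first step is to rewrite $\lambda_n(\xi)$ as a $\limsup$ of explicit local quantities along the bi-infinite continued fraction of $\xi$. The classical identity $1/\big(t_k^2\,|\xi-s_k/t_k|\big)=\alpha_{k+1}+\beta_{k+1}$, with $\alpha_k=[a_k;a_{k+1},\dots]$ and $\beta_k=[0;a_{k-1},\dots,a_1]$, handles the convergents whose denominator is coprime to $n$; the factor $\gcd(t,n)$ forces us in addition to follow the continued fraction of $d\xi$ for each $d\mid n$, since the approximations $s/t$ of $\xi$ with $d\mid t$ correspond to approximations of $d\xi$. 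The transducer $T_n$ is precisely the finite automaton that, reading the continued fraction of $\xi$, writes that of $n\xi$ (and through its intermediate states, that of every $d\xi$): a bi-infinite admissible path of $T_n$ carries exactly the joint continued-fraction data of $\xi$ and $n\xi$, and along such a path $\lambda_n(\xi)=\limsup_k\Phi$ for an explicit local payoff $\Phi$.

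The second step is to minimize $\limsup_k\Phi$ over all bi-infinite admissible paths in the finite graph $T_n$. After restricting to the strongly connected components that a recurrent path can occupy, the infimum equals $\min_C\max_C\Phi$ over directed cycles $C$ --- a minimax-cycle problem, and exactly the search our algorithm performs. Its termination is the statement that this search is finite and that the optimum is genuinely attained on a cycle; this is the conjectural heart of the matter. Granting termination, the optimum is realized on a cycle, so a minimizing $\xi$ has an eventually periodic continued fraction and is therefore a quadratic irrational; in particular $\min\L_n$ is attained. (The abstract's observation that the optimal $x$ and $px$ always lie in one of three symmetric relations is the kind of a priori structural constraint on the optimal cycle that a termination proof would presumably exploit.)

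The third step is to identify the value. Let $\xi$ be such a quadratic-irrational minimizer, with algebraic conjugate $\xi'$ and primitive integral binary quadratic form $f$ of discriminant $D=\disc f$, which is positive and not a perfect square. Partitioning $\ZZ^2\bs\{0\}$ according to the finitely many values of $\gcd(t,n)$ gives
\[
  \min\L_n \;=\; \min\M_n \;=\; \mu_n(\xi,\xi') \;=\; \max_{d\mid n}\frac{d\sqrt{D}}{m_d},
\]
where $m_d=\min\{\,|f(s,t)| : (s,t)\ne 0,\ \gcd(t,n)=d\,\}$ is a positive integer, since the integral form $f$ has no rational root and hence never vanishes on $\ZZ^2\bs\{0\}$, and a nonempty set of positive integers has a least element. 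Each term equals $\sqrt{D(d/m_d)^2}$, so $\min\L_n=\sqrt{r}$ for a positive rational $r$, which is irrational because $\sqrt r\in\QQ$ would force $D$ to be a perfect square. (On the continued-fraction side the same value appears through the Galois reversal identity: at a seam inside the period, $\alpha_k+\beta_k=\eta_k-\eta_k'$ for the purely periodic quadratic irrational $\eta_k=[\overline{a_k,a_{k+1},\dots}]$, and this equals $\sqrt{\disc}/(\text{leading coefficient})$.) Finally, for isolation, the set $\{\xi:\lambda_n(\xi)\le c\}$ consists of the bi-infinite paths of $T_n$ that avoid every cycle on which $\Phi$ exceeds $c$; since $T_n$ is finite, the numbers $\max_C\Phi$ taken over cycles form a discrete set near their minimum, so there is a genuine gap just above $\min\L_n$ --- the analogue of the classical jump from $\sqrt5$ to $\sqrt8$ --- and $\min\L_n$ is an isolated point of $\L_n$.

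The main obstacle is exactly the termination invoked in the second step: that the minimax-cycle optimum on $T_n$ is attained on a finite cycle and is found by a terminating search, rather than only approached along an aperiodic path or requiring an unbounded exploration of the transducer. Once a periodic (hence quadratic-irrational) minimizer is available, the remaining conclusions --- the $\sqrt{\text{rational}}$ form of the value, its irrationality, and the isolation of $\min\L_n$ --- are routine. A secondary gap is that the transducer apparatus, as constructed for a prime $p$, must first be upgraded to a product transducer tracking $d\xi$ for all $d\mid n$ simultaneously before the composite case of the conjecture can even be formulated algorithmically.
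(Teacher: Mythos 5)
The statement you are proving is a conjecture, and the paper does not prove it either: its own contribution is the conditional Theorem \ref{thm:alg_OK} (if Algorithm \ref{alg:main} terminates, then $\min\L_p$ is an isolated point and the square root of a rational) together with computational verification for primes $p<2000$. Your proposal follows essentially that same route --- encode the joint continued fractions of $\xi$ and $n\xi$ on a (product) Raney transducer, minimize a limsup of cut qualities over bi-infinite paths, and argue that the optimum is attained on a cycle --- and you correctly identify termination/attainment on a cycle as the unproven heart. So as a proof of Conjecture \ref{conj:main} the proposal has the same irreducible gap as the paper; that part is an honest restatement of the strategy, not a proof.

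However, there is a further genuine flaw beyond the admitted gap: your isolation argument does not work as stated. You claim that ``since $T_n$ is finite, the numbers $\max_C\Phi$ taken over cycles form a discrete set near their minimum.'' Finiteness of the transducer does not give this: there are infinitely many cycles of unbounded length, and their $\lambda$-values need not be discrete near any particular point --- indeed the spectra $\L_n$ have limit points (classically the cycle values $\lambda_z=\sqrt{9-4/z^2}$ accumulate at $3$), so discreteness near the minimum is exactly what must be proved, not a consequence of finiteness. The paper gets isolation from the stopping condition of Algorithm \ref{alg:main}: once every length-$m$ middle segment of a good path of length $m+2t$ has a unique good extension, every infinite path whose cuts all stay below the threshold is forced, after a bounded transient, onto one of \emph{finitely many} explicit cycles, which yields a concrete gap $\beta$ above $\alpha$ (and this is also where Lemma \ref{lem:sensitive} is needed, since the cut quality is not a function of a bounded window of edges, so ``$\min_C\max_C\Phi$ over cycles'' is only an approximation to the true limsup). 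Your step three (the value is an irrational square root of a rational, via $\mu_n(\xi,\bar\xi)=\max_{d\mid n} d\sqrt{D}/m_d$ for a quadratic-irrational minimizer) is fine and parallels the paper's Section \ref{sec:quad_approx}, but it is conditional on the attainment you could not establish, and the isolation claim needs the stopping-condition-type argument, not the finiteness of $T_n$.
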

  \begin{thm} \label{thm:main}
    Conjecture \ref{conj:main} is verified for all prime values $n = p < 2000$.
  \end{thm}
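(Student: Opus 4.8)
The plan is to prove Theorem~\ref{thm:main} by a finite but substantial computation: run the Raney transducer algorithm developed in the sections that follow for each prime $p < 2000$, confirm that it terminates, and read off from each run that both clauses of Conjecture~\ref{conj:main} hold for that $p$. Two structural facts make this tractable. First, as will be established, $\mathcal{L}_n$ and $\mathcal{M}_n$ share a common minimum, so it suffices to analyze $\min\mathcal{M}_p$. Second, for $n = p$ prime the factor $\gcd(t,p)$ in \eqref{eq:lambda_n}--\eqref{eq:mu_n} takes only the two values $1$ and $p$, so the Raney transducers involved carry the simplest possible weighting.

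The algorithm is a branch-and-bound enumeration of the (a~priori infinite) family of doubly infinite continued-fraction windows describing pairs $(\xi, p\xi)$, encoded as states of finitely many Raney transducers and ordered by the lower bound each state places on the attainable value of $\mu_p$. Termination for a given $p$ produces three things: (a)~an explicit quadratic surd $\xi_p$, presented as an eventually periodic continued fraction together with that of $p\xi_p$, the two linked by one of the three symmetric relations; (b)~a certificate that $\mu_p(\xi_p,\xi_p') = \min\mathcal{M}_p < \infty$; and (c)~an explicit $\varepsilon_p > 0$ for which no real $\xi$ satisfies $\lambda_p(\xi)\in(\min\mathcal{L}_p,\ \min\mathcal{L}_p+\varepsilon_p)$.

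From these outputs both parts of Conjecture~\ref{conj:main} follow for $p$. Let $f$ be an integer binary quadratic form with roots $\xi_p,\xi_p'$. Combining (a)--(b) with the computation in \eqref{eq:mu} (adjusted by the $\gcd$ factor of \eqref{eq:mu_n}) gives $\min\mathcal{L}_p = \min\mathcal{M}_p = \mu_p(\xi_p,\xi_p') = r\sqrt{\disc f}$, where $r = \sup_{(s,t)\in\ZZ^2\bs\{0\}}\gcd(t,p)/\size{f(s,t)}$ is a positive rational, finite by (b). Hence $\min\mathcal{L}_p = \sqrt{r^2\,\disc f}$ is the square root of a rational number, and it is irrational because $\xi_p$ is, so $\disc f$ is not a perfect square. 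By (c), $\min\mathcal{L}_p$ is an isolated point of $\mathcal{L}_p$. Ranging over all primes $p < 2000$ completes the proof.

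The main obstacle is not conceptual but a matter of rigor and scale. Each run must be carried out in exact arithmetic --- the comparisons, reductions, and state orderings all involve quadratic irrationalities, and must be handled symbolically or with certified interval arithmetic rather than in floating point --- and one must be sure that every search genuinely closes rather than merely exhausting a time or memory budget. The most demanding runs are expected to be those where $\min\mathcal{L}_p$ sits highest, namely the Heegner primes (the abstract singles out $67$, $3$, and $163$), together with the larger primes near $2000$; for these the crux is confirming that the pruning bounds are strong enough to collapse the search tree to a finite set. Everything else rests on the correctness of the ``if it terminates'' guarantee for the algorithm, proved in the sections that follow, which is invoked here as a black box.
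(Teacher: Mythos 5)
Your proposal takes essentially the same route as the paper: Theorem~\ref{thm:main} is proved exactly by running Algorithm~\ref{alg:main} for every prime $p < 2000$, confirming termination computationally, and invoking the ``if it terminates'' guarantee (Theorem~\ref{thm:alg_OK}), which simultaneously yields the isolation of $\min\L_p$ (via the returned bound $\beta$) and the square-root-of-a-rational property. The only cosmetic difference is that the paper gets the latter from the periodic continued fraction of the optimal cycle (Section~\ref{sec:quad_approx}) rather than from your supremum $r$ over the integral form $f$ --- and there you should justify rationality by noting the supremum is \emph{attained} (since $\size{f(s,t)}$ ranges over positive integers), not merely that it is finite.
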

  The lowest point $\min \L_p$ fluctuates with $p$ (see Section \ref{sec:data}) and reaches its highest observed value at $\min \L_{67} = 3.678\ldots,$ leading to the following curiosity:
  \begin{thm} \label{thm:67}
    The prime $p = 67$ is the unique prime $p < 2000$ with the following property: There is no irrational $\xi$ such that the continued fraction expansions of $\xi$ and $p\xi$ consist after a certain point of only $1$\kern-2pt's and $2$\kern-2pt's.
  \end{thm}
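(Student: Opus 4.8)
The plan is to translate the condition on $p$ into a numerical inequality for $\min\L_p$ and then read the answer off the values of $\min\L_p$ produced by Theorem~\ref{thm:main}. Concretely, I aim for the equivalence: \emph{for a prime $p$ there is an irrational $\xi$ whose continued fraction and that of $p\xi$ both consist, after some point, of only $1$'s and $2$'s, if and only if $\min\L_p\le 2\sqrt3$.} This rests on two auxiliary facts. The first is that $\lambda_p(\xi)=\max\bigl(\lambda(\xi),\lambda(p\xi)\bigr)$ for every irrational $\xi$ --- the mechanism behind the conic interpretation in \cite{OConics}, which I expect to be available from the set-up, and of which only ``$\le$'' is strictly needed here: splitting the $\limsup$ defining $\lambda_p(\xi)$ according to whether $p\mid t$, the terms with $p\nmid t$ are ordinary approximation qualities of $\xi$, while for $t=pu$ one has $\frac{\gcd(t,p)}{t^2}\bigl|\tfrac st-\xi\bigr|^{-1}=\frac1{u^2}\bigl|\tfrac su-p\xi\bigr|^{-1}$, an approximation quality of $p\xi$; for ``$\ge$'' use $\gcd(t,p)\ge 1$ and the same substitution in reverse on good approximations of $p\xi$. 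The second fact is that the continued fraction of an irrational $\eta$ is eventually all $1$'s and $2$'s exactly when $\lambda(\eta)\le 2\sqrt3$.

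I would prove the second fact from the classical formula $\lambda(\eta)=\limsup_{n}\bigl(a_n+[0;a_{n+1},a_{n+2},\dots]+[0;a_{n-1},\dots,a_1]\bigr)$ for $\eta=[a_0;a_1,a_2,\dots]$ (see \cite{CusickFlahive}) together with the elementary identity $\sup\{[0;b_1,b_2,\dots]:b_i\in\{1,2\}\}=[0;\overline{1,2}]=\sqrt3-1$. If $\eta$ is eventually all $1$'s and $2$'s, then for large $n$ each of the two tail continued fractions is $\le\sqrt3-1$ (for the finite backward tail, up to an error that vanishes as $n$ grows) and $a_n\le 2$, so $\lambda(\eta)\le 2+2(\sqrt3-1)=2\sqrt3$. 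Conversely, if $\lambda(\eta)\le 2\sqrt3$ then $a_n\le 3$ for large $n$ (each summand is $\ge a_n$); and if $a_n=3$ infinitely often, then at such (large) $n$ both tails have all entries in $\{1,2,3\}$, hence are each $\ge\inf\{[0;b_1,b_2,\dots]:b_i\in\{1,2,3\}\}=[0;\overline{3,1}]=\tfrac{\sqrt{21}-3}{6}$, giving $\lambda(\eta)\ge 3+2\cdot\tfrac{\sqrt{21}-3}{6}=\tfrac{6+\sqrt{21}}{3}>2\sqrt3$, a contradiction --- so the expansion is eventually all $1$'s and $2$'s.

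Assembling, the equivalence is immediate: if $\xi$ and $p\xi$ have the stated form then $\lambda_p(\xi)=\max(\lambda(\xi),\lambda(p\xi))\le 2\sqrt3$, whence $\min\L_p\le 2\sqrt3$; conversely, for $p<2000$ the minimum is attained (by Conjecture~\ref{conj:main}, verified in Theorem~\ref{thm:main}), say at $\xi^\ast$, and $\max(\lambda(\xi^\ast),\lambda(p\xi^\ast))=\lambda_p(\xi^\ast)=\min\L_p\le 2\sqrt3$ forces the continued fractions of $\xi^\ast$ and $p\xi^\ast$ to be eventually all $1$'s and $2$'s. It then remains to invoke Theorem~\ref{thm:main}: it outputs $\min\L_p$ for every prime $p<2000$, and since $2\sqrt3=\sqrt{12}=3.4641\ldots$, the tabulated values (Section~\ref{sec:data}) show $\min\L_{67}=3.678\ldots>2\sqrt3$ while $\min\L_p\le 2\sqrt3$ for every other prime $p<2000$ --- indeed even the next largest values, at the Heegner primes $3$ and $163$, lie below $2\sqrt3$. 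Thus $67$ is the unique prime below $2000$ with the stated property, and for each of the other primes the minimizing cycle of the Raney transducer furnishes an explicit quadratic-irrational witness $\xi$.

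The genuine difficulty is external to this deduction: it is Theorem~\ref{thm:main} --- the termination and correctness of the transducer algorithm --- that supplies the numbers $\min\L_p$, and given those the argument above is short. Within it, the two points demanding care are the sharpness of the constant $2\sqrt3$ in the continued-fraction lemma (so that ``eventually only $1$'s and $2$'s'' is matched exactly, with no slack on either side) and the check that no prime other than $67$ has $\min\L_p$ in the narrow window $(2\sqrt3,\,3.68]$.
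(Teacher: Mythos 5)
Your proposal is correct and is essentially the paper's own argument: the paper likewise deduces Theorem \ref{thm:67} from the computed values of $\min\L_p$ (Theorem \ref{thm:main}) together with the classical fact that $\lambda$-values at most $2\sqrt{3}$ are exactly those of numbers whose continued fractions are eventually all $1$'s and $2$'s (the gap $(2\sqrt{3},\sqrt{13})$ in $\L$), combined with $\lambda_p(\xi)=\max(\lambda(\xi),\lambda(p\xi))$ from Proposition \ref{prop:lambda_n->lambda}; your only additions are a self-contained proof of the gap lemma with the weaker but sufficient bound $(6+\sqrt{21})/3$ in place of $\sqrt{13}$. One small slip: $\min\L_3=2\sqrt{3}$ exactly, so it does not ``lie below'' $2\sqrt{3}$, but this is harmless since your criterion is the non-strict inequality $\min\L_p\le 2\sqrt{3}$.
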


  Along the way, we prove that $\L_n$ and $\M_n$ enjoy certain desirable properties long known for $\L$ and $\M$, specifically:
  \begin{itemize}
    \item $\L_n \subseteq \M_n$ (Proposition \ref{prop:LnMn});
    \item Any $\alpha \in \M_n$ is realized by a pair $(\xi,\xi')$ for which the supremum in the definition \eqref{eq:mu_n} is attained (Proposition \ref{prop:attained});
    \item $\L_n$ and $\M_n$ are closed (Propositions \ref{prop:Ln_closed} and \ref{prop:Mn_closed} respectively);
    \item $\L_n$ and $\M_n$ contain a Hall's ray $[nF, \infty)$ (Proposition \ref{prop:hall}).
  \end{itemize}

  \subsection{Methods}
  Kim--Sim \cite{KimSim} study and compare $\L_2$ and $\M_2$ using \emph{Romik expansions,} a useful way of simultaneously recording the continued fraction expansions of $\xi$ and $2\xi$ by words over an alphabet of three digits. Cha--Chapman--Gelb--Weiss \cite{ChaEis} create a suitable analogue of Romik expansions with a five-digit alphabet useful for studying $\L_3$ and $\M_3$. However, for reasons that we will explain below, we expect no analogous code with a finite alphabet to exist for $n > 3$. Hence we need another approach. It is unclear if the geometrical methods of Vulakh can be harnessed for automated computation. Instead, we use \emph{Raney transducers,} a technique for applying a linear fractional transformation to a real number expressed in continued fraction form \cite{Raney_Automata}. A Raney transducer is a finite directed graph whose edges are labeled with segments of a continued fraction. Our algorithm iterates through paths on the Raney transducer with an eye to looking for suitable cycles, representing periodic continued fractions with the desired low approximabilities.

  \subsection{Organization of the paper}
  In Section \ref{sec:classical}, we recall classical results on the spectra $\L$ and $\M$. In Section \ref{sec:basic}, we prove some elementary results on $\L_n$ and $\M_n$ by working from the definitions. In Section \ref{sec:raney}, we describe the construction of two types of Raney transducer: a ``fast'' one, due to Raney, that is useful for computations, and a ``slow'' one that has certain theoretical advantages. In Section \ref{sec:closed}, we apply Raney transducers to show that $\L_n$ is closed. In Section \ref{sec:alg}, we describe an algorithm for computing $\min\L_p$ for prime $p$ and verifying Conjecture \ref{conj:main}. In Section \ref{sec:Markoff}, we explain certain low-lying points connected with Markoff triples that appear repeatedly as the output of our algorithm. Finally, in Section \ref{sec:data}, we display data and muse on patterns found in the outputs for all $p$.

  \subsection{Code}
  The Sage code used in the computational parts of the paper can be found at \url{https://github.com/sad-ish-cat/DioApprox}. In the comments at the end of the file \texttt{raney.sage} are some sample commands as a guide to replicating the computations.

  \subsection{Acknowledgements}
  This paper is an outgrowth of our results from the course ``Topics in Undergraduate Research'' at Carnegie Mellon University, spring 2024. We thank Theresa Anderson for organizing the course.

  \section{Classical results on approximability} \label{sec:classical}

  If $\xi$ is an irrational number, we define the \emph{quality} of an approximation $s/t$ to be the quantity
  \[
    \frac{1}{t^2 \Size{\dfrac{s}{t} - \xi}}
  \]
  appearing in \eqref{eq:lambda}. The following results are classical.
  \begin{prop} \label{prop:quality}
    Let $\xi = [a_0, a_1, a_2, \ldots]$ be an irrational number, expressed as an infinite simple continued fraction.
    \begin{enumerate}[(a)]
      \item The quality of the convergent $p_k/q_k = [a_0, \ldots, a_{k-1}]$ is given by
      \[
      \frac{(-1)^k}{q_k^2\bigl(\frac{p_k}{q_k} - \xi\bigr)} = a_k + [0,a_{k-1},a_{k-2}, \ldots, a_1] + [0,a_{k+1},a_{k+2}, \ldots]\big).
      \]
      \item Any approximation to $\xi$ of quality $\geq 2$ is a convergent.
      \item The approximability of $\xi$ is the limsup of the qualities of its convergents:
      \begin{equation}\label{eq:approx}
        \lambda(\xi) = \limsup_{k \to \infty} \big(a_k + [0,a_{k-1},a_{k-2}, \ldots, a_1] + [0,a_{k+1},a_{k+2}, \ldots]\big).
      \end{equation}
    \end{enumerate}
  \end{prop}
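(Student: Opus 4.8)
The plan is to derive all three parts from the standard recursions for continued-fraction convergents, so I would begin by recalling the two ``tails'' attached to the index $k$. Write $\xi = [a_0,a_1,\ldots]$, let $p_k/q_k = [a_0,\ldots,a_{k-1}]$ be the $k$-th convergent, and introduce the \emph{complete quotient} $\xi_k = [a_k,a_{k+1},\ldots]$ and the \emph{backward quotient} $\beta_k = [0,a_{k-1},a_{k-2},\ldots,a_1] = q_{k-1}/q_k$. The basic identity is
\[
  \xi = \frac{p_k \xi_k + p_{k-1}}{q_k \xi_k + q_{k-1}},
\]
which, combined with $p_k q_{k-1} - p_{k-1} q_k = (-1)^k$, gives
\[
  q_k^2\left(\frac{p_k}{q_k} - \xi\right) = \frac{q_k(p_k - q_k \xi)}{1} = \frac{(-1)^k}{\xi_k + q_{k-1}/q_k} = \frac{(-1)^k}{\xi_k + \beta_k}.
\]
Since $\xi_k = a_k + [0,a_{k+1},a_{k+2},\ldots]$, this is exactly part (a). This step is routine bookkeeping with the recursions; the only care needed is keeping the index conventions consistent with the paper's choice $p_k/q_k = [a_0,\ldots,a_{k-1}]$ (note the $k-1$).

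For part (c), I would observe that the right-hand side of \eqref{eq:approx} is precisely $\limsup_k |(-1)^k / (q_k^2(p_k/q_k - \xi))|^{-1}$ wait --- more directly, it is $\limsup_k$ of the reciprocal of the absolute error scaled by $q_k^2$, i.e. the limsup of the qualities of the convergents. So part (c) follows from part (a) together with part (b): part (a) identifies the quality of each convergent with the displayed sum of three continued fractions, and part (b) guarantees that no \emph{non}-convergent approximation can contribute to the limsup, because any approximation of quality $\geq 2$ is already a convergent, hence the limsup over \emph{all} rationals $s/t \to \xi$ equals the limsup over convergents (the qualities exceeding $2$, if any, come only from convergents, and qualities below $2$ cannot affect a limsup that is always $\geq \sqrt 5 > 2$). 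I would spell out this last point carefully since it is the logical hinge between (b) and (c).

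The main work is part (b), the classical fact that an approximation of quality at least $2$ is a convergent. I would prove the contrapositive in the sharp Legendre form: if $|s/t - \xi| < 1/(2t^2)$ then $s/t$ is a convergent of $\xi$. The standard argument writes $\xi - s/t = \varepsilon \theta / t^2$ with $|\theta| < 1/2$ and $\varepsilon = \pm 1$, expands $s/t$ as a finite continued fraction $[b_0,\ldots,b_m]$ of length chosen so that $(-1)^m = \varepsilon$ (possible since $[b_0,\ldots,b_m] = [b_0,\ldots,b_m - 1, 1]$ lets us adjust the parity), and then produces a real number $\omega > 1$ with $\xi = (\omega s + s')/(\omega t + t')$ where $s'/t'$ is the previous convergent of $s/t$; a short computation forces $\omega > 1$, which means $\xi = [b_0,\ldots,b_m,\omega]$ with $\omega$ a legitimate complete quotient, so $s/t = [b_0,\ldots,b_m]$ is genuinely a convergent of $\xi$. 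I expect this parity-adjustment-and-$\omega>1$ estimate to be the only genuinely delicate step; everything else is assembling known recursions. Since these are billed as ``classical,'' I would likely present part (b) tersely with a citation to a standard reference (e.g. Khinchin or Hardy--Wright) and give full detail only for parts (a) and (c), where the exact form in \eqref{eq:approx} is what later sections need.
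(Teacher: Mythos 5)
Your outline is correct, but note that the paper does not actually prove this proposition: it is stated as classical and dispatched with a citation to Appendix 1 of Cusick--Flahive, so there is no internal argument to compare against. What you propose is precisely the standard proof that that reference contains: part (a) from the identity $\xi = (p_k\xi_k + p_{k-1})/(q_k\xi_k + q_{k-1})$ together with $p_kq_{k-1}-p_{k-1}q_k=(-1)^k$ and the mirror formula $q_{k-1}/q_k=[0,a_{k-1},\ldots,a_1]$ (and you correctly track the paper's shifted indexing $p_k/q_k=[a_0,\ldots,a_{k-1}]$); part (b) is Legendre's theorem; part (c) is the reduction of the limsup to convergents. Two small points to tighten. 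First, quality $\geq 2$ gives only $\size{s/t-\xi}\leq 1/(2t^2)$, while Legendre's hypothesis is strict; you should remark that equality is impossible because $\xi$ is irrational, so the strict form does apply. Second, your appeal to ``the limsup is always $\geq\sqrt5$'' invokes Hurwitz, which is itself usually deduced from (a) and (c); to keep the logic self-contained it is cleaner to use the elementary fact (Vahlen) that of any two consecutive convergents at least one has quality $>2$, which already shows the limsup over convergents exceeds $2$ and hence that non-convergent approximations (all of quality $<2$ by (b)) cannot affect it. With those adjustments your argument is complete and is, in substance, the classical proof the paper outsources.
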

  \begin{proof}
    These are standard results; see \cite[Appendix 1]{CusickFlahive}.
  \end{proof}

  A \emph{cut} of a continued fraction is a choice of truncation point $k$ as above. A notation like $[a_0, \ldots, a_{k-1} \big\vert a_k, \ldots]$ is often used; but this obscures the symmetry between the terms before and after $a_k$. For this reason, we denote a cut by $[a_0, \ldots, a_{k-1} \boxed{a_k} a_{k+1}, \ldots]$. The \emph{quality} of the cut is the $\lambda$-value
  \[
    \lambda\bigl([a_0, \ldots, a_{k-1} \boxed{a_k} a_{k+1}, \ldots]\bigr) =
    a_k + [0,a_{k-1},a_{k-2}, \ldots, a_1] + [0,a_{k+1},a_{k+2}, \ldots].
  \]
  Note that the dominant contribution to the quality $\lambda(C)$ of a cut is $a_k$; the terms $a_{k \pm h}$ lying farther away are progressively less important as $h$ grows. This can be made quantitative:
  \begin{lem} \label{lem:sensitive}
    If two simple continued fractions $\xi = [a_0,a_1,\ldots]$ and $\xi' = [a_0',a_1',\ldots]$ have the same initial terms $[a_0,\ldots, a_k] = [a_0',\ldots, a_k']$, then the difference of their values is bounded by
    \[
      \size{\xi - \xi'} \leq 2^{1-k}.
    \]
  \end{lem}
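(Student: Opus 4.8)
The plan is to compare the two continued fractions term by term, using the standard fact that consecutive convergents $p_k/q_k$ and $p_{k+1}/q_{k+1}$ bracket the value of the continued fraction, and that the true value lies between any two consecutive convergents. Since $\xi$ and $\xi'$ share the initial segment $[a_0,\ldots,a_k]$, they share the convergents $p_0/q_0,\ldots,p_{k+1}/q_{k+1}$ (the convergent $p_{k+1}/q_{k+1}=[a_0,\ldots,a_k]$ depends only on $a_0,\ldots,a_k$). Both $\xi$ and $\xi'$ then lie in the closed interval with endpoints $p_k/q_k$ and $p_{k+1}/q_{k+1}$, so
\[
  \size{\xi-\xi'} \;\leq\; \Size{\frac{p_{k+1}}{q_{k+1}} - \frac{p_k}{q_k}} \;=\; \frac{1}{q_k q_{k+1}},
\]
using the determinant identity $p_{k+1}q_k - p_k q_{k+1} = \pm 1$.

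Next I would bound $q_k q_{k+1}$ from below. Since all partial quotients $a_i$ (for $i\geq 1$) are positive integers, hence $\geq 1$, the denominators satisfy the recursion $q_{i+1} = a_{i+1} q_i + q_{i-1} \geq q_i + q_{i-1}$ with $q_0 = 1$, $q_1 = a_1 \geq 1$; an easy induction gives $q_i \geq F_{i}$ (a Fibonacci-type bound), and more crudely $q_i \geq 2^{(i-1)/2}$ for $i\geq 1$. Therefore $q_k q_{k+1} \geq 2^{(k-1)/2} \cdot 2^{k/2} = 2^{(2k-1)/2} = 2^{k - 1/2} \geq 2^{k-1}$, which yields $\size{\xi-\xi'} \leq 2^{1-k}$ as claimed. (One should handle the small cases $k=0,1$ separately, where the bound is weak but still valid, since $\xi,\xi'$ lie in an interval of length at most $1$.)

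I do not expect any real obstacle here — this is a textbook estimate. The only mild subtlety is bookkeeping: one must be careful about the indexing convention in the paper (the convergent written $p_k/q_k = [a_0,\ldots,a_{k-1}]$ uses $k$ terms, so "sharing $[a_0,\ldots,a_k]$" means sharing $k{+}1$ partial quotients and hence the convergents through index $k{+}1$), and one must make sure the exponent in the Fibonacci/powers-of-two bound is matched up so that the final constant comes out to exactly $2^{1-k}$ rather than something slightly larger. A clean alternative that sidesteps the convergent machinery entirely: induct on $k$ directly, writing $\xi = [a_0,a_1,\ldots] = a_0 + 1/[a_1,a_2,\ldots]$ and similarly for $\xi'$, so that $\size{\xi - \xi'} = \size{1/\eta - 1/\eta'}$ where $\eta = [a_1,a_2,\ldots]$, $\eta' = [a_1',\ldots]$ share $k$ initial terms and both satisfy $\eta,\eta' \geq 1$; then $\size{1/\eta - 1/\eta'} = \size{\eta-\eta'}/(\eta\eta') \leq \size{\eta - \eta'}$, and applying the inductive hypothesis to $\eta,\eta'$ gives $\size{\xi-\xi'} \leq 2^{1-(k-1)} \cdot (\text{something} \leq 1/2)$; tightening the factor $1/(\eta\eta')$ to $\leq 1/2$ when $a_1 \geq 1$ (so $\eta \geq 2$ unless the tail is trivial, a case one checks) closes the induction. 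I would present whichever of these two is shorter in the final writeup, probably the inductive one since it avoids invoking convergent identities.
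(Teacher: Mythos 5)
The paper offers no in-house argument for this lemma (it simply cites Cusick--Flahive, Lemma 1), and your main route --- both numbers lie between the consecutive convergents determined by $a_0,\ldots,a_k$, so $\size{\xi-\xi'} \leq 1/(q_k q_{k+1})$, then a Fibonacci-type lower bound on the denominators --- is exactly that standard proof, and it does close. Two bookkeeping slips to fix in the writeup, the first of which you partly anticipated: (1) the crude bound $q_i \geq 2^{(i-1)/2}$ is not enough, since the relevant product is that of the denominators of $[a_0,\ldots,a_{k-1}]$ and $[a_0,\ldots,a_k]$ (consecutive indices), which this bound only makes $\geq 2^{k-3/2}$, giving $2^{3/2-k}$ rather than $2^{1-k}$; instead use $q_{k-1}q_k \geq F_k F_{k+1} \geq 2^{k-1}$ (standard indexing), proved by the one-line induction $q_i q_{i+1} \geq (q_i + q_{i-1})q_i \geq 2\,q_{i-1}q_i$. (2) Your alternative single-step induction does not work as sketched: the claim that $\eta = [a_1,a_2,\ldots] \geq 2$ ``unless the tail is trivial'' is false whenever $a_1 = 1$ (e.g.\ $[1,m,\ldots]$ is close to $1$ for large $m$), so the factor $1/(\eta\eta')$ need not be $\leq 1/2$ and one step only yields $2^{2-k}$. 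The fix is to descend two levels at a time: writing $\xi = [a_0,a_1,\zeta]$ one gets $\size{\xi-\xi'} = \size{\zeta-\zeta'}/\bigl((a_1\zeta+1)(a_1\zeta'+1)\bigr)$ with $(a_1\zeta+1)(a_1\zeta'+1) \geq 4$ since $\zeta,\zeta' \geq 1$, which restores the needed factor of $2$ per step on average. With either repair the lemma follows; the convergent version is the shorter one and matches the cited reference.
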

  \begin{proof}
    This is standard and elementary; see \cite[Lemma 1]{CusickFlahive}.
  \end{proof}

  \subsection{Approximabilities of quadratic irrationals} \label{sec:quad_approx}
  To compute $\lambda(\xi)$ for a quadratic irrational $\xi$ is a finite task: if
  \[
    \xi = [a_0,\ldots, a_k, \overline{b_1,\ldots, b_\ell}],
  \]
  then the transient terms $a_i$ are of no consequence, and taking the limit of the qualities of cuts at a recurring term $b_i$, we may write
  \begin{equation} \label{eq:cut_periodic}
    \lambda\bigl(\overline{b_1,\ldots\boxed{b_i}\ldots,b_\ell}\bigr) = 
    b_i + [0, \overline{b_{i+1}, \ldots, b_\ell, b_1, \ldots, b_i}]
        + [0, \overline{b_{i-1}, \ldots, b_1, b_\ell, \ldots, b_i}],
  \end{equation}
  so that
  \begin{equation}
    \lambda(\xi) = \max_{1 \leq i \leq \ell} \bigl(\overline{b_1,\ldots\boxed{b_i}\ldots,b_\ell}\bigr).
    \label{eq:periodic}
  \end{equation}

  If $\bar\xi$ is the algebraic conjugate of $\xi$, note that $\mu(\xi,\bar\xi) = \lambda(\xi)$. The formula for the conjugate of a purely periodic continued fraction,
  \[
    \operatorname{conj}([\overline{b_1,\ldots,b_\ell}]) = -[0, \overline{b_\ell,\ldots, b_1}],
  \]
  implies that the right-hand side of \eqref{eq:cut_periodic} and hence \eqref{eq:periodic} is the difference of a quadratic irrational and its conjugate, and hence is the irrational square root of a rational number.
  
  \section{Elementary properties of \texorpdfstring{$\L_n$}{ℒₙ} and \texorpdfstring{$\M_n$}{ℳₙ}} \label{sec:basic}
  
  We begin by proving some properties of general interest about the $n$-spectra $\L_n$ and $\M_n$. In the classical case $n = 1$, these were mostly proved in Cusick \cite{CusickConnection} and/or Cusick--Flahive \cite{CusickFlahive}. First, we show that the $n$-approximability $\lambda_n(\xi), \mu_n(\xi,\xi')$ defined in \eqref{eq:lambda_n}--\eqref{eq:mu_n} can be computed in terms of the classical approximability $\lambda(\xi), \mu(\xi,\xi')$ respectively.
  
  \begin{prop} \label{prop:lambda_n->lambda}
    \[
    \lambda_n(\xi) = \max_{g\mid n} \lambda(g\xi) \textand 
    \mu_n(\xi,\xi') = \max_{g\mid n} \mu(g\xi, g\xi').
    \]
  \end{prop}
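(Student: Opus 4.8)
The plan is to prove the two identities in parallel, since the Markoff case includes the Lagrange case via the observation $\lambda(\xi) = \mu(\xi,\bar\xi)$ recorded above; but in fact it is cleanest to prove the $\mu_n$ identity directly and then either specialize or give the $\lambda_n$ identity by the same bookkeeping. I would start from the definition \eqref{eq:mu_n} and organize the lattice $\ZZ^2\setminus\{0\}$ according to the value of $g = \gcd(t,n)$. For a fixed divisor $g \mid n$, write $t = g t'$; the pairs $(s,t)$ with $\gcd(t,n) = g$ are a subset of those with $g \mid t$, and for the latter the quantity
\[
  \frac{g \cdot \size{\xi - \xi'}}{\size{s - t\xi}\,\size{s - t\xi'}}
  = \frac{g\,\size{\xi - \xi'}}{\size{s - g t'\xi}\,\size{s - g t'\xi'}}
  = \frac{1}{g}\cdot\frac{\size{g\xi - g\xi'}}{\size{s - t'(g\xi)}\,\size{s - t'(g\xi')}}
\]
is visibly of the shape appearing in $\mu(g\xi, g\xi')$, up to the constant factor $1/g$ which I must account for. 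So the real content is a careful comparison of the sup over $\{(s,t) : \gcd(t,n) = g\}$ with the sup over all of $\ZZ^2\setminus\{0\}$ that defines $\mu(g\xi,g\xi')$.

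The key steps, in order: (1) Show $\mu_n(\xi,\xi') = \max_{g\mid n}\ \sup\{\,(\text{ratio}) : (s,t)\in\ZZ^2\setminus\{0\},\ \gcd(t,n)=g\,\}$, simply by partitioning the index set. (2) For each $g$, bound this conditional sup above by $\mu(g\xi,g\xi')$: the displayed computation shows every term with $g \mid t$ equals $\tfrac1g\,\big(\text{ratio for }g\xi,g\xi'\text{ at }(s,t')\big) \le \tfrac1g \mu(g\xi,g\xi') \le \mu(g\xi,g\xi')$ since $\mu(\cdot,\cdot)\ge 1$ always (e.g. because $\mu \ge \lambda \ge \sqrt5 > 1$, or directly), hence the conditional sup over the possibly-smaller set $\{\gcd(t,n)=g\}$ is also $\le \mu(g\xi,g\xi')$. (3) For the reverse inequality, I want, for each $g \mid n$, to produce a sequence of lattice points $(s_j, t_j)$ with $\gcd(t_j, n) = g$ exactly whose ratios approach $\mu(g\xi,g\xi')$. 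Starting from a sequence $(\sigma_j, \tau_j)\in\ZZ^2\setminus\{0\}$ realizing $\mu(g\xi,g\xi')$ as a sup, set $t_j = g\tau_j$ and $s_j = \sigma_j$; then the ratio for $(\xi,\xi')$ at $(s_j,t_j)$ equals $\tfrac1g$ times the ratio for $(g\xi,g\xi')$ at $(\sigma_j,\tau_j)$ — wait, that gives the factor $1/g$ the wrong way. The resolution is the standard one: $\mu$ is invariant under scaling the form, and more to the point the factor $g$ in the numerator of $\mu_n$ is exactly calibrated so that after substituting $t = gt'$ one gets $\mu(g\xi,g\xi')$ with no leftover constant, provided one also adjusts $s$; concretely $\size{s - t\xi} = \size{s - t'(g\xi)}$ and the numerator $g\size{\xi-\xi'} = \size{g\xi - g\xi'}$, so the ratio is \emph{literally} $\mu(g\xi,g\xi')$'s summand, not $1/g$ of it. I will redo step (2) with this correction: the summand for $(s,t)$ with $g\mid t$ equals the $\mu(g\xi,g\xi')$-summand at $(s, t/g)$, so the conditional sup is $\le \mu(g\xi,g\xi')$ with no factor, and the $1/g$ confusion disappears.

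So step (3) becomes: given $(\sigma_j,\tau_j)$ with ratio for $(g\xi,g\xi')$ tending to $\mu(g\xi,g\xi')$, I want lattice points with $t = g\tau$ and $\gcd(g\tau,n) = g$, i.e. $\gcd(\tau, n/g) = 1$. The obstacle — the one genuinely nontrivial point — is that the optimizing $\tau_j$ might share factors with $n/g$. This is a density/approximation issue: I need to show that one can choose the near-optimal $(\sigma,\tau)$ with $\tau$ coprime to $n/g$ without spoiling the ratio. I would handle it by replacing $(\sigma,\tau)$ with $(\sigma + k\lfloor\text{something}\rfloor, \tau)$ or, better, by using that for quadratic (or general) irrationals the sup in $\mu$ is approached along \emph{convergent-like} pairs whose denominators $\tau$ grow and hit every residue class infinitely often, or simply by the continued-fraction description: among cuts realizing $\lambda(g\xi)$ (for the $\lambda_n$ case) the denominators $q_k$ of $g\xi$'s convergents are, up to bounded index shifts, free to be taken coprime to any fixed modulus by passing to a subsequence — this uses that $\gcd(q_k, q_{k+1}) = 1$, so consecutive convergent denominators can't both be divisible by a given prime, giving enough coprime ones. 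I expect this coprimality-of-denominators argument to be the main obstacle and the place where I'd need to be careful; the rest is the elementary substitution $t = gt'$ together with $\mu(\cdot,\cdot) \ge 1$. The $\lambda_n$ identity then follows by the identical partition of convergents of $\xi$ by $\gcd(q_k, n)$, combined with Proposition \ref{prop:quality}(c) relating $\lambda(g\xi)$ to the limsup of qualities of $g\xi$'s convergents.
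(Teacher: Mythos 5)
Your partition by the exact value $\gcd(t,n)=g$ creates the obstacle you flag in step (3), and the argument you sketch there does not close it --- but the obstacle itself is a misdiagnosis. For the inequality $\max_{g\mid n}\mu(g\xi,g\xi')\le\mu_n(\xi,\xi')$ you never need lattice points with $\gcd(t,n)$ \emph{exactly} $g$: given any $(\sigma,\tau)\in\ZZ^2\setminus\{0\}$, set $(s,t)=(\sigma,g\tau)$; then the $\mu(g\xi,g\xi')$-summand at $(\sigma,\tau)$ equals $\dfrac{g\,\size{\xi-\xi'}}{\size{s-t\xi}\,\size{s-t\xi'}}$, while the actual $\mu_n$-summand at $(s,t)$ carries $\gcd(t,n)\ge g$ in its numerator and is therefore at least as large. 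Monotonicity in the numerator thus gives $\mu_n(\xi,\xi')\ge\mu(g\xi,g\xi')$ for every $g\mid n$ with no coprimality condition at all; combined with your (self-corrected) step (2), the $\mu_n$ identity is done. The same remark handles $\lambda_n$: any sequence $s/t'\to g\xi$ pulls back to $s/(gt')\to\xi$ whose $n$-quality dominates the quality of $s/t'$ for $g\xi$, so $\lambda_n(\xi)\ge\lambda(g\xi)$.

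As written, however, step (3) is a genuine gap: the claim that near-optimal denominators for $\mu(g\xi,g\xi')$ or $\lambda(g\xi)$ can be taken coprime to $n/g$ is not justified. The sup or limsup may be approached only along a sparse subsequence of convergents, and $\gcd(q_k,q_{k+1})=1$ only prevents two \emph{consecutive} denominators from sharing a given prime; it does not let you find, inside the optimizing subsequence, denominators coprime to a composite $n/g$, nor do convergent denominators hit every residue class in general. The paper avoids the issue from the outset by writing $\gcd(t,n)=\max_{g\mid n,\ g\mid t}g$, exchanging the max with the limsup, and only then substituting $t=gt'$ under the divisibility condition $g\mid t$ (not the exact-gcd condition), so that the inner limsup is literally $\lambda(g\xi)$ up to the harmless remark about non-reduced fractions; restructuring your steps (1)--(3) along these lines, or using the monotonicity observation above, repairs the proof.
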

  \begin{proof}
    We have
    \begin{align*}
      \lambda_n(\xi) &= \limsup_{\tfrac{s}{t} \to \xi} \frac{\gcd(t,n)}{t^2 \Size{\tfrac{s}{t} - \xi}} \\
      &= \limsup_{\tfrac{s}{t} \to \xi} \max_{g\mid n, g\mid t} \frac{g}{t^2 \Size{\tfrac{s}{t} - \xi}} \\
      &= \max_{g \mid n} \limsup_{\tfrac{s}{t} \to \xi, g\mid t} \frac{g}{t^2 \Size{\tfrac{s}{t} - \xi}} \\
      &= \max_{g \mid n} \limsup_{\tfrac{s}{t'} \to g\xi} \frac{g}{(gt')^2 \bigl\lvert{\tfrac{s}{gt'} - \xi}\bigr\rvert} \\
      &= \max_{g \mid n} \limsup_{\tfrac{s}{t'} \to g\xi} \frac{1}{t'^2 \Size{\tfrac{s}{t'} - g\xi}} \\
      &= \max_{g \mid n} \lambda(g\xi).
    \end{align*}
    Note that in the transformation process, we must allow non-reduced fractions $s/t$ and $s/t'$, but including or excluding such fractions does not affect the supremum because the reduced form always yields a better approximation quality. The proof for $\mu_n$ is analogous.
  \end{proof}
  \begin{cor}
    If $m \mid n$, then
    \begin{equation}\label{eq:n'|n}
      \L_n \subseteq \L_m \textand \M_n \subseteq \M_m.
    \end{equation}
    In particular,
    \begin{equation} \label{eq:LnL}
      \L_n \subseteq \L \textand \M_n \subseteq \M.
    \end{equation}
  \end{cor}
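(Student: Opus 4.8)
The plan is to deduce the corollary directly from Proposition~\ref{prop:lambda_n->lambda} by a divisor-chasing argument. Fix $m \mid n$; the goal is to show that every value taken by $\lambda_n(\cdot)$ is also taken by $\lambda_m(\cdot)$, and likewise for the Markoff spectra. The key observation is that the set of divisors of $m$ is a subset of the set of divisors of $n$, so for any real $\xi$ we have
\[
  \lambda_n(\xi) = \max_{g \mid n} \lambda(g\xi) \geq \max_{g \mid m} \lambda(g\xi) = \lambda_m(\xi).
\]
This inequality alone is not enough — it compares the two approximabilities of the \emph{same} $\xi$, whereas the spectra are defined as ranges. So the real content is to exhibit, for a given $\xi$, some $\xi^\ast$ with $\lambda_m(\xi^\ast) = \lambda_n(\xi)$.

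**The natural choice** is $\xi^\ast = h\xi$, where $h = n/m$. Indeed, the divisors of $n$ are in bijection with pairs $(h', g')$ with $h' \mid h$ and \dots — more simply, one checks that $\{g : g \mid n\} \supseteq \{h \cdot g' : g' \mid m\}$ when $h \mid n/\gcd(\dots)$, but cleanest is: since $m \mid n$, write $n = mh$; then for any $g' \mid m$ we have $hg' \mid hm = n$. Hence
\[
  \lambda_m(h\xi) = \max_{g' \mid m} \lambda(g' \cdot h\xi) = \max_{g' \mid m} \lambda(hg' \cdot \xi) \leq \max_{g \mid n} \lambda(g\xi) = \lambda_n(\xi).
\]
Combined with the reverse inequality $\lambda_m(h\xi) \geq \lambda_n(h\xi) \geq \dots$ — hmm, that goes the wrong way. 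Let me reconsider: I actually want equality, and the above gives only $\lambda_m(h\xi) \le \lambda_n(\xi)$, which shows $\L_n \subseteq \{\text{things} \le \text{values in } \L_m\}$, not containment. The correct move is the opposite substitution.

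**Here is the fix.** Given $\xi$, I want to realize $\lambda_n(\xi)$ as $\lambda_m$ of something. Note $\lambda_n(\xi) = \max_{g \mid n} \lambda(g\xi)$; let $g_0 \mid n$ achieve this max. Since $m \mid n$, I claim $\lambda_m\big(\tfrac{g_0}{\gcd(g_0,?)}\dots\big)$ — again awkward. The genuinely clean approach: simply take $\xi^\ast$ such that its $\lambda_m$-orbit $\{g'\xi^\ast : g' \mid m\}$ contains $g_0 \xi$ and nothing with larger classical approximability. Taking $\xi^\ast = g_0\xi$ gives $\lambda_m(g_0\xi) = \max_{g'\mid m}\lambda(g' g_0 \xi) \ge \lambda(g_0\xi) = \lambda_n(\xi)$, and for the reverse, each $g'g_0$ with $g' \mid m$ satisfies $g'g_0 \mid m g_0$; this need not divide $n$. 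So this fails too when $g_0$ is not coprime to $m$. \textbf{The main obstacle}, then, is precisely this: the composite $g'g_0$ may fall outside the divisors of $n$, so $\lambda_m(g_0\xi)$ could strictly exceed $\lambda_n(\xi)$, breaking equality. I would resolve it by instead proving the containment without claiming the \emph{same} witness works for the reverse direction — that is, containment $\L_n \subseteq \L_m$ only requires that \emph{every} value of $\lambda_n$ appears among values of $\lambda_m$, and the cleanest route is: for $m \mid n$, $\lambda_n(\xi) = \max_{g \mid n}\lambda(g\xi)$, and each $\lambda(g\xi)$ equals $\lambda_m(g\xi)$ whenever $g\xi$ has the property that its only $\lambda_m$-competitor achieving the max is itself — which one arranges by a density/limsup argument, or by noting $\lambda(\eta) \in \L \supseteq \L_m$ is already in $\L_m$ via $\eta$ taken over $\SL_2\ZZ$-images, appealing to \eqref{eq:LnL}'s proof structure. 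In the write-up I would present the slick version: $\L_n = \bigcup_{g \mid n}\{\lambda(g\xi)\}$ over all $\xi$ is just the classical $\L$ intersected appropriately, but since the statement in the excerpt is $\L_n \subseteq \L_m$, the honest short proof is to observe that $\lambda_n(\xi) = \lambda(g_0\xi)$ for some $g_0 \mid n$, hence $g_0 \mid n$ forces, after replacing $\xi$ by $g_0 \xi / \gcd$-normalization, membership in $\L_m$ because $\L_m$ is itself closed under the relevant substitutions — details I would verify from the definitions, and this bookkeeping is the one step that needs care rather than cleverness. The second containment $\M_n \subseteq \M_m$ is word-for-word identical using the $\mu$ half of Proposition~\ref{prop:lambda_n->lambda}, and \eqref{eq:LnL} is the special case $m = 1$.
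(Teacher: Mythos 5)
Your write-up correctly diagnoses why your two candidate witnesses fail, but it never resolves the difficulty: the final paragraph trails off into ``density/limsup argument'' and ``$\L_m$ is closed under the relevant substitutions,'' which are not arguments, and the appeal to the proof structure of \eqref{eq:LnL} is circular, since \eqref{eq:LnL} is the $m=1$ case of the very statement being proved. So as written there is a genuine gap: you never exhibit a $\xi^\ast$ with $\lambda_m(\xi^\ast)=\lambda_n(\xi)$. The paper treats the corollary as immediate from Proposition \ref{prop:lambda_n->lambda}, and the missing step is a regrouping of divisors that your attempt circles around but does not land on. Write $n=mh$. Every divisor $g$ of $n$ factors as $g=g'e$ with $g'\mid m$ and $e\mid h$ (for each prime $q$, give $g'$ the smaller of the exponents of $q$ in $g$ and in $m$; then $g'\mid m$ and $g/g'\mid h$), and conversely every such product $g'e$ divides $mh=n$. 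Hence
\[
  \lambda_n(\xi) \;=\; \max_{g\mid n}\lambda(g\xi) \;=\; \max_{e\mid h}\,\max_{g'\mid m}\lambda(g'e\xi) \;=\; \max_{e\mid h}\lambda_m(e\xi).
\]
The outer maximum runs over finitely many values, so it is attained at some $e_0\mid h$, giving $\lambda_n(\xi)=\lambda_m(e_0\xi)\in\L_m$. The identical regrouping applied to the $\mu$ half of Proposition \ref{prop:lambda_n->lambda} gives $\M_n\subseteq\M_m$, and $m=1$ yields \eqref{eq:LnL}.

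In other words, the obstacle you flagged (that $g'g_0$ need not divide $n$ when $g_0\mid n$ maximizes $\lambda(g\xi)$) is real only because you chose the wrong witness: the correct one is $e_0\xi$ with $e_0$ a divisor of $h=n/m$, for which $g'e_0\mid n$ holds automatically for every $g'\mid m$, so both inequalities $\lambda_m(e_0\xi)\le\lambda_n(\xi)$ and $\lambda_m(e_0\xi)\ge\lambda_n(\xi)$ come for free from the displayed identity. Your observation that $h\xi$ gives only the one-sided bound $\lambda_m(h\xi)\le\lambda_n(\xi)$ is correct but, as you noted, insufficient; with the divisor regrouping above the proof closes in three lines.
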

  \begin{rem}
    We take this corollary as a sign that we have chosen the ``correct'' scaling of $\L_n$ and $\M_n$. In the literature \cite{ChaIntrinsic,ChaEis,KimSim}, these spectra have arisen in different contexts and are scaled by a multiplicative constant. With some authors, such as Vulakh \cite{VulakhMarkov}, the approximabilities are the reciprocals of those given here.
  \end{rem}
  
  \begin{prop} \label{prop:LnMn}
    $\L_n \subseteq \M_n$. Moreover, any $\alpha \in \L_n$ can be realized as $\mu_n(\xi,\xi')$ for some $\xi,\xi' \in \RR$ for which the supremum in the definition \eqref{eq:mu_n} is attained at some $(s,t) \in \ZZ^2 \bs \{0\}$.
  \end{prop}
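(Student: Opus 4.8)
The plan is to reduce to the classical situation via Proposition~\ref{prop:lambda_n->lambda} and then rerun the classical ``a sequence of best approximations produces a bi-infinite word realizing the Lagrange value as a Markoff value'' argument (Cusick~\cite{CusickConnection}, Cusick--Flahive~\cite{CusickFlahive}), but in a form that simultaneously controls the continued fractions of $\xi$ and of all its dilates $g\xi$ with $g\mid n$.

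First I would set up the data. Fix $\alpha\in\L_n$, say $\alpha=\lambda_n(\xi_0)$. By Proposition~\ref{prop:lambda_n->lambda} there is a divisor $g_0\mid n$ with $\lambda(g_0\xi_0)=\alpha$, while $\lambda(g\xi_0)\le\alpha$ for all $g\mid n$. Write $\xi_0=[a_0,a_1,\dots]$ with convergent matrices $M_k$. For each $g\mid n$ and each cut position $k$, reducing $\begin{bmatrix}g&0\\ 0&1\end{bmatrix}M_k$ as (a continued-fraction prefix of $g\xi_0$) times a ``small'' remainder matrix $R_{g,k}$ of determinant $\pm g$ picks out a cut position $\ell_g(k)$ of $g\xi_0$, with $\ell_g(k)\to\infty$, and such that every sufficiently deep cut of $g\xi_0$ arises as some $\ell_g(k)$. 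Letting $c_g(k)$ be the quality of the cut of $g\xi_0$ at $\ell_g(k)$, we get $\limsup_k c_g(k)=\lambda(g\xi_0)$; thus $\limsup_k c_{g_0}(k)=\alpha$ while $\limsup_k c_g(k)\le\alpha$ for every $g\mid n$.

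Next, choose $k_1<k_2<\cdots$ with $c_{g_0}(k_j)\to\alpha$. Since $\alpha<\infty$, the partial quotients of $\xi_0$, and of each $g\xi_0$, near the relevant cuts are bounded, so by a diagonal/pigeonhole extraction we may assume: (i) the windows $(a_{k_j-r},\dots,a_{k_j+r})$ converge, as $j\to\infty$, to a bi-infinite word $A=(A_i)_{i\in\ZZ}$; and (ii) for each of the finitely many $g\mid n$, the remainder matrix $R_{g,k_j}$ is eventually constant in $j$. Here (ii) is exactly the step that needs the \emph{finiteness of the state set}: multiplying a continued fraction by a fixed rational is a finite-state operation (Raney~\cite{Raney_Automata}, developed in Section~\ref{sec:raney}), which forces the $R_{g,k}$ into a finite set. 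Granting this, the windows of $g\xi_0$ about $\ell_g(k_j)$ also converge, to a bi-infinite word $A^{(g)}$ that is the image of $A$ under ``multiply the bi-infinite continued fraction by $g$.'' Set $\xi':=[A_0,A_1,A_2,\dots]$ and $\xi'':=-[0,A_{-1},A_{-2},\dots]$: these are distinct irrationals ($\xi'>1$ and $\xi''\in(-1,0)$), $(\xi',\xi'')$ is the classical bi-infinite pair of $A$ and $(g\xi',g\xi'')$ that of $A^{(g)}$, so $\mu(g\xi',g\xi'')=\sup_{i\in\ZZ}\lambda_i(A^{(g)})$ by the classical identity~\cite{CusickFlahive}, and (since $\alpha>2$) this supremum is attained wherever the value $\alpha$ is reached.

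To conclude, for every $g\mid n$ and $i$ the cut of $g\xi'$ at position $i$ is a limit of cuts of $g\xi_0$ whose indices go to infinity, so $\lambda_i(A^{(g)})\le\limsup_k c_g(k)\le\alpha$ and hence $\mu(g\xi',g\xi'')\le\alpha$; and for $g=g_0$, $i=0$, continuity of the quality functional gives $\lambda_0(A^{(g_0)})=\lim_j c_{g_0}(k_j)=\alpha$. Thus $\mu(g_0\xi',g_0\xi'')=\alpha$, attained at the convergent $(u_0,v_0)$ of $g_0\xi'$ at position $0$; Proposition~\ref{prop:lambda_n->lambda} then gives $\mu_n(\xi',\xi'')=\max_{g\mid n}\mu(g\xi',g\xi'')=\alpha$, proving $\L_n\subseteq\M_n$, and unwinding that proposition shows $(s,t)=(u_0,g_0v_0)$ attains the supremum in~\eqref{eq:mu_n}. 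The hard part is (ii): carrying the classical limiting-word construction through for all divisors at once genuinely requires the finite-state description of rational scaling of continued fractions, which is precisely what ties this proof to the Raney machinery of Section~\ref{sec:raney}; a minor additional nuisance is that cut indices of $\xi_0$ and of $g\xi_0$ do not correspond one-to-one, so ``cut $i$ of $A^{(g)}$'' must be tracked as ``some cut of $g\xi_0$ whose index still tends to $\infty$,'' which is harmless for the $\limsup$ bounds but needs to be said carefully. (As a sanity check in the other direction, note $\mu_n(\xi,\eta)\ge\lambda_n(\xi)$ for any $\eta\ne\xi$, since the convergents of $g\xi$ already witness $\mu(g\xi,g\eta)\ge\lambda(g\xi)$; so the whole difficulty lies in choosing the companion number so as to force the reverse inequality.)
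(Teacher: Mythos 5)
Your reduction to the classical bi\hyp{}infinite word argument has a genuine gap at the key identification: the claim that $(g\xi',g\xi'')$ is the bi-infinite pair of $A^{(g)}$ is false in general. Write $M_k$ for the convergent matrices of $\xi_0$, $\theta_k=M_k^{-1}(\xi_0)$ for its tails, and factor $\operatorname{diag}(g,1)\,M_k=N_{\ell_g(k)}R_{g,k}$ with $N_\ell$ a convergent matrix of $g\xi_0$ and $R_{g,k}$ your remainder (transducer-state) matrix. Then the tail of $g\xi_0$ at the cut $\ell_g(k)$ is $R_{g,k}(\theta_k)$ and its backward endpoint is $N_\ell^{-1}(\infty)=R_{g,k}\bigl(M_k^{-1}(\infty)\bigr)$, so after your extraction the endpoints of the limit word $A^{(g)}$ are $(R_g\xi',R_g\xi'')$, not $(g\xi',g\xi'')$. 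The two pairs are related by $\operatorname{diag}(g,1)R_g^{-1}$, which has determinant $\pm1$ but is integral only when $g$ divides the first column of $R_g$ (this already fails for the level-$3$ state $\bigl[\begin{smallmatrix}2&1\\1&2\end{smallmatrix}\bigr]$); so they need not be $\GL_2\ZZ$-equivalent, and the identity $\mu(g\xi',g\xi'')=\sup_i\lambda_i(A^{(g)})$, which you use both for the bound $\leq\alpha$ at every divisor and for the equality at $g_0$, is unjustified. Equivalently: running the transducer on the continued fraction of $\xi'$ from the initial state $\operatorname{diag}(g,1)$ is not the continuation of the run on $\xi_0$, which sits in the state $R_g$ at your cuts. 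The root cause is that the shifts $M_{k_j}^{-1}$ used to form $(\xi',\xi'')$ lie in $\GL_2\ZZ$ but not in $\Gamma^0(n)$, and $\mu_n$ is invariant only under the latter; for $n=p$ prime nothing in your construction prevents $\mu(p\xi',p\xi'')>\alpha$, since the partial quotients of $p\xi'$ are controlled only up to a factor on the order of $p$.

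Your step (ii) (eventual constancy of the finitely many states, via Raney) and the closing sanity check $\mu_n\geq\lambda_n$ are fine; what is missing is a reduction that preserves the $n$-quality structure. This is exactly where the paper's proof diverges: it never touches continued fractions, but moves the approximating fractions $s_i/t_i$ to one of the finitely many cusps of $X_0(n)$ by elements $\gamma_i\in\Gamma^0(n)$, normalizes by the parabolic stabilizer of that cusp, and takes $(\xi',\xi'')=\lim\bigl(\gamma_i(\infty),\gamma_i(\xi)\bigr)$; the finiteness of cusps plays the role you intended for the finitely many transducer states, but because the $\gamma_i$ lie in $\Gamma^0(n)$ the limit pair has all divisors $g\mid n$ controlled simultaneously, and the supremum is attained at the fixed cusp $s/t$. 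To salvage your symbolic route you would have to do the analogous thing, e.g.\ combine two cut positions carrying identical states for all $g\mid n$ into an element of $\Gamma^0(n)$ (or otherwise choose the companion pair so that for every $g\mid n$ the dilate $(g\xi',g\xi'')$ is $\GL_2\ZZ$-equivalent to the corresponding limit word); as written, the proposal does not do this.
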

  
  \begin{proof}
    Let $\alpha = \lambda_n(\xi) \in \L_n$. Then there is a sequence of fractions
    \[
    \frac{s_1}{t_1}, \frac{s_2}{t_2}, \ldots
    \]
    converging to $\xi$ such that the ``$n$-qualities,'' that is, the arguments to the limsup in the definition \eqref{eq:lambda_n}
    \[
    \alpha_i \coloneqq \frac{\gcd(t_i,n)}{t_i^2 \bigl\lvert \frac{s_i}{t_i} - \xi \bigr\rvert},
    \]
    converge to $\alpha$. The action of $\Gamma^0(n)$ on $\PP^1(\QQ)$ has finitely many orbits (the \emph{cusps} of the associated modular curve $X_0(n)$). Hence, after passing to a subsequence, we may assume that each $s_i/t_i$ maps by some $\gamma_i \in \Gamma^0(n)$ to a single fraction $s/t$. Note that $\gcd(t_i,n) = \gcd(t,n)$. The map $\gamma_i$ is unique up to postcomposition by the stabilizer $\Z = \Stab_{\Gamma^0(n)}(s/t)$, which is isomorphic to $\ZZ$, generated by one parabolic element with unique fixed point $s/t$. Thus, fixing a closed fundamental domain $\F$ for $\Z$, not containing $s/t$ nor (for simplicity) $\infty$, we can pick $\gamma_i$ such that $\gamma_i(\infty) \in \F$. Now, after passing to a subsequence, we can find $\xi', \xi'' \in \PP^1(\RR)$ such that
    \[
    \gamma_i(\infty) \to \xi', \quad \gamma_i(\xi) \to \xi''.
    \]
    We claim that $(\xi',\xi'')$ is the desired pair with $\mu_n(\xi',\xi'') = \alpha$. We have
    \begin{equation} \label{eq:LnMn_inv}
      \alpha = \lim_{i \to \infty} \frac{\gcd(t_i,n)}{t_i^2 \bigl\lvert \frac{s_i}{t_i} - \xi \bigr\rvert}
      = \lim_{i \to \infty} \frac{\gcd(t,n) \size{\gamma_i(\xi) - \gamma_i(\infty)}}{\size{s - t\gamma_i(\xi)} \size{s - t\gamma_i(\infty)}}.
    \end{equation}
    First note that if $\xi' = \xi''$, then as $i \to \infty$, the numerator of \eqref{eq:LnMn_inv} tends to $0$ while the denominator is bounded since $s/t, \infty \notin \F$. So $\alpha = 0$, which is impossible. So $\xi' \neq \xi''$.
    
    For any reduced fraction $u/v$, let $u_i/v_i = \gamma_i^{-1}(u/v)$ and note that, by the $\Gamma^0(n)$-invariance of the Markoff $n$-approximability,
    \begin{align}
      \frac{\gcd(v,n) \Size{\xi' - \xi''}}{\Size{u - v\xi'}\Size{u - v\xi''}} &= \lim_{i \to \infty} \frac{\gcd(v,n) \Size{\gamma_i(\infty) - \gamma_i(\xi)}}{\Size{u - v\gamma_i(\infty)}\Size{u - v\gamma_i(\xi)}} \nonumber \\
      &= \lim_{i \to \infty} \frac{\gcd(v_i, n)}{v_i \Size{u_i - v_i \xi}}. \label{x:uv}
    \end{align}
    We claim that $u_i/v_i \to \xi$, which will complete the proof, since the right-hand side of \eqref{x:uv} is bounded above by $\alpha$, with equality holding when $u/v = s/t$.
    
    Suppose not. After passing to a subsequence, $u_i/v_i$ converge to some point $\eta \in \PP^1(\RR)$ different from $\xi$. Now
    \begin{equation} \label{x:toxi}
      \frac{\gcd(v_i,n)}{v_i \size{u_i - \xi v_i}} = \frac{\gcd(v,n) \size{\gamma_i(\xi) - \gamma_i(\infty)}}{\size{u - \gamma_i(\xi) v}\size{u - \gamma_i(\infty) v}} \to \frac{\gcd(v,n) \size{\xi' - \xi''}}{\size{u - \xi' v} \size{u - \xi'' v}} > 0.
    \end{equation}
    Since $\eta \neq \xi$, the left-hand side of \eqref{x:toxi} goes to $0$ unless $u_i/v_i$ is infinitely often equal to the same fraction. Passing to a subsequence, we assume that $u_i/v_i = \eta = u_1/v_1$ is constant. Now $\gamma_i = \sigma^{m_i} \gamma_1$ lies in a fixed coset of the stabilizer $\Z_{\eta} = \sigma^\ZZ$ of $\eta$ in $\Gamma^0(n)$. Since the $\gamma_i$ cannot be constant on an infinite subsequence, we must have $\size{m_i} \to \infty$ and so, for any $x \in \PP^1(\RR)$ different from $u/v$, we have $\lim_{i \to \infty} \gamma_i(x) = \eta$. In particular, $\xi' = \eta = \xi''$, which is a contradiction.
  \end{proof}
  
  \begin{prop} \label{prop:attained}
    Any $\alpha \in \M_n$ can be realized by some $\xi,\xi' \in \RR$ for which the supremum in the definition \eqref{eq:mu_n} is attained at some $(s,t) \in \ZZ^2 \bs \{0\}$.
  \end{prop}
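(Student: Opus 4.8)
The plan is to run the argument from the proof of Proposition~\ref{prop:LnMn} with a Markoff datum in place of a Lagrange one. Write $\alpha = \mu_n(\xi_0, \xi_0')$ and fix a sequence $(s_i, t_i) \in \ZZ^2 \bs \{0\}$, which we may take primitive (rescaling a vector never increases the ``$n$-quality''), along which the ``$n$-qualities''
\[
  \alpha_i \coloneqq \frac{\gcd(t_i, n)\Size{\xi_0 - \xi_0'}}{\Size{s_i - t_i\xi_0}\Size{s_i - t_i\xi_0'}}
\]
converge to $\alpha$. Passing to a subsequence I may assume $\gcd(t_i, n)$ equals a fixed $g$, and then --- since $\Gamma^0(n)$ has only finitely many orbits on $\PP^1(\QQ)$ --- that all the $s_i/t_i$ lie in one orbit. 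Choosing a representative $s/t$ of that orbit that itself has $\gcd(t,n) = g$, I pick $\gamma_i \in \Gamma^0(n)$ with $\gamma_i(s_i/t_i) = s/t$; then, exactly as in Proposition~\ref{prop:LnMn}, I normalize $\gamma_i$ modulo the (infinite cyclic) stabilizer of $s/t$ in $\Gamma^0(n)$ so that $\gamma_i(\infty)$ lies in a fixed compact fundamental domain $\F$ avoiding $s/t$ and $\infty$, and pass to a further subsequence so that $\gamma_i(\xi_0) \to \xi$ and $\gamma_i(\xi_0') \to \xi'$ in $\PP^1(\RR)$. The claim is that $(\xi,\xi')$ realizes $\alpha$, with the supremum in \eqref{eq:mu_n} attained at $(s,t)$.

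The verification would rest on two inputs already available. The first is the transformation law for the $n$-quality under $\Gamma^0(n)$: for $\gamma \in \Gamma^0(n)$ and primitive $(u,v)$, the $n$-quality of $(\gamma\eta, \gamma\eta')$ at $(u,v)$ equals that of $(\eta, \eta')$ at $\gamma^{-1}(u,v)$, provided $(u,v)$ and $\gamma^{-1}(u,v)$ share the same denominator-$\gcd$ with $n$ --- the same bookkeeping underlying the remark ``$\gcd(t_i,n) = \gcd(t,n)$'' in the proof of Proposition~\ref{prop:LnMn}. Applied with $(u,v) = (s,t)$ and $\gamma = \gamma_i$, and using that $s/t$ was chosen with $\gcd(t,n) = g = \gcd(t_i,n)$, this says the $n$-quality of $(\gamma_i\xi_0, \gamma_i\xi_0')$ at $(s,t)$ is exactly $\alpha_i$; letting $i \to \infty$ (the controls $s/t, \infty \notin \F$ keep the denominators from degenerating) gives
\[
  \frac{\gcd(t, n)\Size{\xi - \xi'}}{\Size{s - t\xi}\Size{s - t\xi'}} = \lim_{i\to\infty}\alpha_i = \alpha,
\]
which forces $\xi \neq \xi'$ and hence $\mu_n(\xi,\xi') \geq \alpha$, the value $\alpha$ being attained at $(s,t)$. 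The second input is the $\Gamma^0(n)$-invariance of $\mu_n$ itself: $\mu_n(\gamma_i\xi_0, \gamma_i\xi_0') = \alpha$ for every $i$, so the $n$-quality of $(\gamma_i\xi_0,\gamma_i\xi_0')$ at an arbitrary primitive $(u,v)$ is $\leq \alpha$; passing to the limit (for $\alpha$ finite this is legitimate: the same degeneration-at-a-rational argument shows neither $\xi$ nor $\xi'$ can equal $u/v$ or $\infty$, so $\xi,\xi' \in \RR$ are irrational and no denominator collapses) shows the $n$-quality of $(\xi,\xi')$ at $(u,v)$ is $\leq \alpha$, and taking the supremum over $(u,v)$ gives $\mu_n(\xi,\xi') \leq \alpha$. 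Together these give $\mu_n(\xi,\xi') = \alpha$, attained at $(s,t)$.

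I expect the main difficulty to be the $\gcd$ bookkeeping through the cusp reduction: arranging the representative $s/t$ so that the transformation law lands on $\alpha_i$ on the nose, and --- crucially --- handling the arbitrary $(u,v)$ in the upper-bound step via the $\Gamma^0(n)$-invariance of $\mu_n$ rather than by a pullback $\gamma_i^{-1}(u,v)$, which need not preserve the denominator-$\gcd$; the rest is routine (continuity of the $n$-quality in the pair, non-degeneracy of denominators, and disposing of the edge cases $\alpha = \infty$ or $\xi,\xi'$ rational). Notably, the trickiest point in Proposition~\ref{prop:LnMn} --- showing the pulled-back fractions $u_i/v_i$ reconverge to the approximated real --- has no counterpart here, since the supremum defining $\mu_n$ ranges over all integer vectors with no convergence constraint, so the upper bound $\mu_n(\xi,\xi') \leq \alpha$ falls out immediately once the invariance of $\mu_n$ is in hand.
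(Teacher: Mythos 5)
Your strategy is genuinely different from the paper's: the paper disposes of the statement in two lines (if the supremum is not attained, then $\size{s_i}+\size{t_i}\to\infty$, one factor in the denominator of \eqref{eq:mu_n} tends to $0$, hence $\alpha=\lambda_n(\xi)\in\L_n$, and Proposition \ref{prop:LnMn} already produces a pair at which the supremum is attained), whereas you re-run the cusp-reduction renormalization directly on the Markoff pair, in the spirit of Proposition \ref{prop:Mn_closed}. The problem is that your version has a gap at its central step, the non-degeneracy of the limit pair. In Proposition \ref{prop:LnMn} the second member of the pair \emph{is} $\infty$, so the normalization $\gamma_i(\infty)\in\F$ pins a member of the pair inside a closed set avoiding $s/t$ and $\infty$; that is exactly what makes the sentence ``the numerator tends to $0$ while the denominator is bounded'' legitimate there. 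In your setting neither member of $(\xi_0,\xi_0')$ is $\infty$, so pinning $\gamma_i(\infty)$ controls neither $\size{s-t\gamma_i\xi_0}$ nor $\size{s-t\gamma_i\xi_0'}$. Consequently your parenthetical ``the controls $s/t,\infty\notin\F$ keep the denominators from degenerating'' does not exclude the scenario in which $\gamma_i\xi_0$ and $\gamma_i\xi_0'$ \emph{both} converge to $s/t$ (or both to $\infty$): there the quality at $(s,t)$ is an indeterminate $0/0$, so $\alpha_i\to\alpha>0$ yields no contradiction, and the limit identity from which you deduce $\xi\neq\xi'$ and attainment is unproved. This is precisely the point where the Markoff case differs from the Lagrange case, and it is the one step that needed an argument.

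The gap is repairable, but not by the sentence you wrote. Cleanest repair: first dispose of the case where the supremum is attained (nothing to prove), so that $\size{s_i}+\size{t_i}\to\infty$ and, as in the paper's proof, WLOG $s_i/t_i\to\xi_0$; then normalize so that $\gamma_i(\xi_0')\in\F$, i.e.\ pin a member of the pair just as $\infty$ is pinned in Proposition \ref{prop:LnMn}; the argument of Proposition \ref{prop:Mn_closed} then goes through (one still needs the bound at $(u,v)=(1,0)$ coming from $\mu_n(\gamma_i\xi_0,\gamma_i\xi_0')=\alpha$ to rule out $\lim\gamma_i\xi_0=\infty$, since the quality at $(s,t)$ alone does not). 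Alternatively, keep your normalization but prove the missing fact: for an unbounded sequence in $\SL_2\ZZ$ with $\gamma_i(\infty)$ confined to $\F$, the columns become parallel, so after a subsequence there is a single point $Q$ with $\gamma_i(x)\to\lim\gamma_i(\infty)\in\F$ for all $x\neq Q$; then at most one of $\xi_0,\xi_0'$ can avoid collapsing into $\F$, collapse of both contradicts $\alpha>0$, and the remaining degeneracies are excluded as above. Neither of these ingredients appears in your sketch; or one can simply take the paper's reduction to $\L_n$, which sidesteps the issue entirely.
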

  
  \begin{proof}
    Let $\alpha = \mu_n(\xi,\xi')$. If the supremum $\alpha$ in \eqref{eq:mu_n} is not attained, then there is an infinite sequence of distinct pairs $(s_i,t_i)$ for which
    \begin{equation} \label{x:attained}
      \lim_{i \to \infty} \frac{\gcd(t, n) \cdot \Size{\xi - \xi'}}{\Size{s_i - t_i\xi} \Size{s_i - t_i\xi'}} = \alpha.
    \end{equation}
    Since $\size{s_i} + \size{t_i} \to \infty$, the two factors in the denominator cannot both be bounded, so on passing to a subsequence, one of them tends to $\infty$ and the other to $0$. WLOG $s_i/t_i \to \xi$. Then
    \[
    \alpha = \limsup_{\tfrac{s}{t} \to \xi} \frac{\gcd(t, n) \cdot \Size{\xi - \xi'}}{\Size{s - t\xi} \Size{s - t\xi'}} = \limsup_{\tfrac{s}{t} \to \xi} \frac{\gcd(t, n)}{t \Size{s - t\xi}} = \lambda_n(\xi).
    \]
    Hence $\alpha \in \L_n$. By the previous proposition, $\alpha \in \M_n$ is realized by a $(\xi'',\xi''')$ for which the supremum is attained.
  \end{proof}
  
  \begin{prop} \label{prop:Mn_closed}
    $\M_n$ is closed.
  \end{prop}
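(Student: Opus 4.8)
The plan is to prove closedness by a compactness argument in the spirit of the classical proof that $\M=\M_1$ is closed, but set up so as to use only the $\Gamma^0(n)$-invariance of $\mu_n$ rather than full $\GL_2\ZZ$-invariance. Suppose $\alpha_j\in\M_n$ with $\alpha_j\to\alpha\in\RR$; discarding finitely many terms, assume $\alpha_j\le M\coloneqq\alpha+1$. By Proposition~\ref{prop:attained} I may choose distinct irrationals $\xi_j,\xi_j'$ with $\mu_n(\xi_j,\xi_j')=\alpha_j$ for which the supremum in \eqref{eq:mu_n} is attained; since replacing an optimal $(s,t)$ by its primitive multiple only increases the ratio, I may take it attained at a \emph{primitive} $(s_j,t_j)\in\ZZ^2\bs\{0\}$. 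Writing $Q_{s,t}(\eta,\eta')\coloneqq\frac{\gcd(t,n)\,\Size{\eta-\eta'}}{\Size{s-t\eta}\,\Size{s-t\eta'}}$, we then have $\mu_n(\xi_j,\xi_j')=Q_{s_j,t_j}(\xi_j,\xi_j')=\alpha_j$. The aim is to extract a subsequence with $(\xi_j,\xi_j')\to(\xi,\xi')$, distinct irrationals, satisfying $\mu_n(\xi,\xi')=\alpha$; this $\alpha$ then lies in $\M_n$.

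First I would normalize the optimal cut. Since $\Gamma^0(n)$ has only finitely many orbits on $\PP^1(\QQ)$ (the cusps of $X_0(n)$), after a subsequence all the points $s_j/t_j$ lie in one orbit, and applying suitable $\gamma_j\in\Gamma^0(n)$ and replacing $(\xi_j,\xi_j')$ by $(\gamma_j\xi_j,\gamma_j\xi_j')$ — which preserves $\mu_n$ and merely transports the optimal cut — I may assume the optimal cut is a fixed primitive $(s_0,t_0)$, so that $\mu_n(\xi_j,\xi_j')=Q_{s_0,t_0}(\xi_j,\xi_j')=\alpha_j$ with $g\coloneqq\gcd(t_0,n)$ independent of $j$. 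Fix $\rho\in\SL_2\ZZ$ with $\rho(s_0/t_0)=\infty$. A short computation (the $\rho$-factors in numerator and denominator cancel) gives $Q_{s_0,t_0}(\xi_j,\xi_j')=g\,\Size{\rho\xi_j-\rho\xi_j'}$, whence $\Size{\rho\xi_j-\rho\xi_j'}=\alpha_j/g\le M/g$. The stabilizer $\Stab_{\Gamma^0(n)}(s_0/t_0)$ is generated, up to $\pm I$, by a parabolic, so its image under $\rho$-conjugation is $\<T^w\>$ for the finite cusp width $w$, acting on $\rho\xi_j$ by integer translations by $w$; applying such a translation — a legitimate $\Gamma^0(n)$-move fixing the cut — I may assume $\rho\xi_j$ lies in a fixed compact interval avoiding $\infty=\rho(s_0/t_0)$, and then $\rho\xi_j'$ lies within $M/g$ of it. Thus $(\rho\xi_j,\rho\xi_j')$ ranges over a fixed compact subset of $\RR^2$.

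After one more subsequence $(\rho\xi_j,\rho\xi_j')\to(\zeta,\zeta')$ with $\Size{\zeta-\zeta'}=\alpha/g>0$; put $(\xi,\xi')\coloneqq(\rho^{-1}\zeta,\rho^{-1}\zeta')$, so $\xi\neq\xi'$, neither equals $s_0/t_0$, and $(\xi_j,\xi_j')\to(\xi,\xi')$. On the set of pairs of distinct irrationals, $\mu_n=\sup_{(s,t)}Q_{s,t}$ is a supremum of continuous functions, hence lower semicontinuous, giving $\mu_n(\xi,\xi')\le\liminf_j\mu_n(\xi_j,\xi_j')=\alpha$; and $Q_{s_0,t_0}$ is continuous at $(\xi,\xi')$, so $\mu_n(\xi,\xi')\ge Q_{s_0,t_0}(\xi,\xi')=\lim_j Q_{s_0,t_0}(\xi_j,\xi_j')=\lim_j\alpha_j=\alpha$. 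Hence $\mu_n(\xi,\xi')=\alpha$, and $\alpha\in\M_n$.

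The step needing the most care is the normalization in the second paragraph: because $\mu_n$ is invariant only under $\Gamma^0(n)$, one cannot conjugate the optimal cut to $\infty$ and simply read off a bounded bi-infinite continued fraction, as in the classical case $n=1$. The point is that $\Gamma^0(n)$ nevertheless has finitely many cusps, each of finite width, so that after pinning the cut at a cusp representative the parabolic stabilizer confines one endpoint to a compact set while the identity $Q_{s_0,t_0}(\xi_j,\xi_j')=g\Size{\rho\xi_j-\rho\xi_j'}=\alpha_j$ confines the other; one should also check that the limit has no coordinate equal to $\infty$ or to a rational — this is immediate from lower semicontinuity (e.g.\ the $(s,t)=(1,0)$ term forces $n\Size{\xi_j-\xi_j'}\le\alpha_j$, and $\mu_n$ is $+\infty$ at such degenerate pairs). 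Everything else — finiteness of the cusps, lower semicontinuity of $\mu_n$, and continuity of the single functional $Q_{s_0,t_0}$ (for which the estimate of Lemma~\ref{lem:sensitive} is far more than enough) — is routine.
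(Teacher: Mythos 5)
Your proof is correct and takes essentially the same route as the paper's: invoke Proposition \ref{prop:attained}, move the attained cut to one of the finitely many cusps of $\Gamma^0(n)$ and use its parabolic stabilizer to normalize, extract a convergent subsequence, and finish by a term-by-term limit comparison (your lower semicontinuity plus continuity of the single functional at the fixed cut is exactly the paper's inequality with equality at $u/v = s/t$). The only differences are presentational: you conjugate the cusp to $\infty$ to make the compactness explicit, whereas the paper works with a fundamental domain for the stabilizer, allows limits in $\PP^1(\RR)$, and rules out rational or infinite limits afterwards, just as you do.
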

  \begin{proof}
    Let $\alpha_1, \alpha_2, \ldots$ be a sequence of elements in $\M_n$ tending to a limit $\alpha$; we shall show that $\alpha \in \M$. By Proposition \ref{prop:attained}, each $\alpha_i = \mu_n(\xi_i, \xi'_i)$ with the supremum being attained at some $s_i/t_i \in \PP^1(\QQ)$. Applying $\Gamma^0(n)$, we can transform each $s_i/t_i$ to one of finitely many values (the cusps of $X_0(n)$, as above), and then, passing to a subsequence, we may assume that $s_i/t_i = s/t$ are all equal. Let $\Z \isom \ZZ$ be the stabilizer of $s/t$ and $\F$ be a fundamental domain for $\Z$ as in the proof of Proposition \ref{prop:LnMn}. Applying elements of $\Z$, we may assume that $\xi_i \in \F$. Passing to a subsequence again, we may assume that $\xi_i \to \xi$ and $\xi_i' \to \xi'$ converge in $\PP^1(\RR)$. For any $(u,v) \in \ZZ^2\bs \{0\}$,
    \begin{align}
      \frac{\gcd(v,n) \Size{\xi - \xi'}}{\Size{u - v\xi}\Size{u - v\xi'}} &= \lim_{i \to \infty} \frac{\gcd(v,n) \Size{\xi_i - \xi_i'}} {\Size{u - v\xi_i}\Size{u - v\xi_i'}} \label{x:uvx} \\
      &\leq \lim_{i \to \infty} \alpha_i \nonumber \\
      &= \alpha, \nonumber
    \end{align}
    equality holding when $u/v = s/t$. In particular, $\xi \neq \xi'$ because $\alpha \neq 0$ and $\xi \neq s/t$, as in the proof of Proposition \ref{prop:LnMn}. Also, $\xi$ and $\xi'$ are finite and irrational because otherwise there would be a choice of $(u,v)$ for which the left-hand side of \eqref{x:uvx} tends to infinity. So $\mu_n(\xi, \xi') = \alpha$, as desired.
  \end{proof}
  
  The $n$-Lagrange spectrum $\L_n$ is also closed, but the proof involves Raney transducers and thus will be taken up in the next two sections. Here are a few other elementary facts.
  
  \begin{prop} \label{prop:min}
    $\min \L_n = \min \M_n$.
  \end{prop}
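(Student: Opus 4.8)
The plan is to deduce the proposition from the two containments already at our disposal, plus one elementary general inequality. One direction is immediate: $\L_n\subseteq\M_n$ by Proposition \ref{prop:LnMn}, and both spectra are nonempty and bounded below (they lie inside $\M$, hence inside $[\sqrt5,\infty)$, by the Corollary following Proposition \ref{prop:lambda_n->lambda}), and $\M_n$ is closed by Proposition \ref{prop:Mn_closed}; so $\alpha:=\min\M_n$ exists and $\min\L_n\ge\alpha$. (I will recover $\min\L_n\in\L_n$ as a byproduct, so Proposition \ref{prop:Ln_closed} need not be invoked.) Everything therefore comes down to producing a single $\xi$ with $\lambda_n(\xi)\le\alpha$.

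The engine is the inequality $\lambda_n(\xi)\le\mu_n(\xi,\xi')$, valid for every irrational $\xi$ and every $\xi'\ne\xi$. To prove it, restrict the supremum defining $\mu_n(\xi,\xi')$ in \eqref{eq:mu_n} to pairs $(s,t)$ with $s/t\to\xi$. Then $\size{t}\to\infty$, and writing $s-t\xi'=(s-t\xi)+t(\xi-\xi')$ gives $\size{s-t\xi'}=\size{t}\,\size{\xi-\xi'}\,(1+o(1))$, whence
\[
  \frac{\gcd(t,n)\,\size{\xi-\xi'}}{\size{s-t\xi}\,\size{s-t\xi'}}=\frac{\gcd(t,n)}{t^2\,\bigl\lvert\tfrac{s}{t}-\xi\bigr\rvert}\,(1+o(1)).
\]
Taking the $\limsup$ along the fractions approaching $\xi$ turns the left side into a quantity $\le\mu_n(\xi,\xi')$ and the right side into exactly $\lambda_n(\xi)$ by the definition \eqref{eq:lambda_n}, so $\mu_n(\xi,\xi')\ge\lambda_n(\xi)$. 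This is the same manipulation as in the proof of Proposition \ref{prop:attained}, only used here without assuming the supremum is unattained; alternatively one may combine Proposition \ref{prop:lambda_n->lambda} with the classical fact $\mu(\zeta,\zeta')\ge\lambda(\zeta)$ applied to $\zeta=g\xi$, $\zeta'=g\xi'$ for each $g\mid n$.

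To finish, write $\alpha=\mu_n(\xi,\xi')$ for some $\xi\ne\xi'$. Since $\alpha<\infty$, both $\xi$ and $\xi'$ are irrational: if $\xi$ were rational, choosing $(s,t)$ with $s/t=\xi$ would give $\mu_n(\xi,\xi')\ge\mu(\xi,\xi')=\infty$. The inequality above then yields $\lambda_n(\xi)\le\alpha$, while $\lambda_n(\xi)\in\L_n\subseteq\M_n$ forces $\lambda_n(\xi)\ge\min\M_n=\alpha$; hence $\lambda_n(\xi)=\alpha\in\L_n$. Finally, every element of $\L_n$ lies in $\M_n$ and is therefore $\ge\alpha$, so $\alpha$ is in fact the least element of $\L_n$, i.e.\ $\min\L_n=\alpha=\min\M_n$. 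The argument is short and I do not foresee a genuine obstacle; the only point needing a little care is that the restricted $\limsup$ equals $\lambda_n(\xi)$ \emph{exactly} (not merely bounds it), which requires allowing non-reduced fractions $s/t$ when tracking $\gcd(t,n)$ — precisely the remark already made in the proof of Proposition \ref{prop:lambda_n->lambda} — together with the verification that the realizing $\xi$ is genuinely irrational, which is why the finiteness of $\alpha$ is used.
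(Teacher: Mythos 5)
Your argument is correct and is essentially the paper's proof: take the minimum $\alpha=\mu_n(\xi,\xi')$ of the closed, bounded-below set $\M_n$, use $\lambda_n(\xi)\le\mu_n(\xi,\xi')$ together with $\L_n\subseteq\M_n$ (Proposition \ref{prop:LnMn}) to force $\lambda_n(\xi)=\alpha\in\L_n$. The only difference is that you spell out the inequality $\lambda_n(\xi)\le\mu_n(\xi,\xi')$ and the irrationality of $\xi$, which the paper asserts without comment; your verification is sound.
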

  \begin{proof}
    Since $\M_n$ is closed and bounded below, it has a minimal element $\alpha = \mu_n(\xi,\xi')$. We have $\mu_n(\xi,\xi') \geq \lambda_n(\xi)$. But $\lambda_n(\xi) \in \L_n \subseteq \M_n$, so equality holds and $\alpha = \lambda_n(\xi) \in \L_n$.
  \end{proof}

  \begin{prop} \label{prop:scale}
    $n\L \subseteq \L_n$ and $n\M \subseteq \M_n$. Here $n\L = \{n\alpha : \alpha \in \L\}$ and likewise for $n\M$.
  \end{prop}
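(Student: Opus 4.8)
The plan is to prove both inclusions by the same idea: turn a classical approximability into an $n$-approximability by exhibiting a representative at which the denominators of the optimal approximations are all divisible by $n$, so that the factor $\gcd(t,n)$ in \eqref{eq:lambda_n}--\eqref{eq:mu_n} is forced to equal $n$ exactly on the approximations that count, and the $n$-approximability comes out $n$ times as large. I would dispatch $n\M\subseteq\M_n$ first, as it is a short computation once Proposition \ref{prop:attained} is available.

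\emph{The inclusion $n\M\subseteq\M_n$.} Let $\alpha=\mu(\xi,\xi')\in\M$. By Proposition \ref{prop:attained} in the case $n=1$, I may choose the representing pair so that the supremum in \eqref{eq:mu} is attained, at some $(s_0,t_0)\in\ZZ^2\setminus\{0\}$; this vector is automatically primitive, since dividing out its content would strictly increase the quotient, contradicting maximality. Pick $\gamma\in\SL_2\ZZ$ carrying the column vector $(1,0)$ to $(s_0,t_0)$ and replace $(\xi,\xi')$ by $(\gamma^{-1}\xi,\gamma^{-1}\xi')$: by $\SL_2\ZZ$-invariance of $\mu$ this preserves $\alpha$, and since the maximizing vector transforms by $\gamma^{-1}$, the supremum is now attained at $(s,t)=(1,0)$. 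Thus $\mu(\xi,\xi')=|\xi-\xi'|=\alpha$ and $|\xi-\xi'|\le\alpha\,|s-t\xi|\,|s-t\xi'|$ for every $(s,t)$. Evaluating \eqref{eq:mu_n} at $(1,0)$, where $\gcd(0,n)=n$, gives $\mu_n(\xi,\xi')\ge n|\xi-\xi'|=n\alpha$, while every term of \eqref{eq:mu_n} is $\gcd(t,n)\cdot\frac{|\xi-\xi'|}{|s-t\xi|\,|s-t\xi'|}\le n\alpha$; hence $\mu_n(\xi,\xi')=n\alpha\in\M_n$.

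\emph{The inclusion $n\L\subseteq\L_n$.} Let $\alpha=\lambda(\eta)\in\L$ and, using Proposition \ref{prop:quality}, fix cuts $k_1<k_2<\cdots$ of $\eta$ with convergents $p_{k_j}/q_{k_j}$ whose qualities tend to $\alpha$. Since $\gcd(p_{k_j},q_{k_j})=1$, the reductions $(p_{k_j},q_{k_j})\bmod n$ are unimodular vectors of $(\ZZ/n)^2$; after passing to a subsequence they all equal one vector $(\bar p,\bar q)$. Then $(-\bar q,\bar p)$ is again unimodular and orthogonal to $(\bar p,\bar q)$; lift it to a primitive $(c,d)\in\ZZ^2$ with $(c,d)\equiv(-\bar q,\bar p)\pmod n$ and complete it to $\gamma=\begin{bmatrix} a&b\\ c&d\end{bmatrix}\in\SL_2\ZZ$. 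Put $\xi=\gamma\eta$, so $\lambda(\xi)=\lambda(\eta)=\alpha$. Using $\gamma z-\gamma w=(z-w)/\bigl((cz+d)(cw+d)\bigr)$ one checks that the images $(P_{k_j},Q_{k_j})=(ap_{k_j}+bq_{k_j},\,cp_{k_j}+dq_{k_j})$ are approximations of $\xi$ whose qualities again tend to $\alpha$ (the denominator grows like $|c\eta+d|\,q_{k_j}$ and the quality is asymptotically unchanged), while $Q_{k_j}\equiv c\bar p+d\bar q\equiv 0\pmod n$; since $\alpha\ge\sqrt5>2$, Proposition \ref{prop:quality}(b) makes these genuine convergents of $\xi$ for large $j$. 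The rescaling $p/q\mapsto p/(gq)$ of \eqref{eq:lambda_n}, exactly as in the proof of Proposition \ref{prop:lambda_n->lambda}, gives for every $g\ge 1$
\[
  \lambda(g\xi)=g\cdot\limsup_{\substack{P/Q\to\xi\\ g\mid Q}}\frac{1}{Q^2\,|P/Q-\xi|}\,;
\]
with $g=n$ and the convergents $(P_{k_j},Q_{k_j})$ this yields $\lambda(n\xi)\ge n\alpha$, and the same identity gives $\lambda(g\xi)\le g\lambda(\xi)=g\alpha\le n\alpha$ for all $g\mid n$. Hence $\lambda_n(\xi)=\max_{g\mid n}\lambda(g\xi)=n\alpha\in\L_n$ by Proposition \ref{prop:lambda_n->lambda}.

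The two limit estimates in the second part --- that applying $\gamma$ to a convergent of $\eta$ multiplies its denominator by a factor tending to $|c\eta+d|$ and leaves the approximation quality asymptotically fixed --- are routine, and the first part is a two-line verification. The one point that genuinely needs care is the congruence bookkeeping: after pigeonholing the good convergents into a single residue class mod $n$, one needs an $\SL_2\ZZ$ matrix whose bottom row annihilates that class. This comes down to the elementary facts that the orthogonal complement of a unimodular vector in $(\ZZ/n)^2$ again contains a unimodular vector, and that such a vector lifts to a primitive integer vector (adjust any lift $(\tilde c,\tilde d)$ to $(\tilde c+kn,\tilde d)$, which is coprime for suitable $k$ because $\gcd(\tilde c,\tilde d,n)=1$). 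For $\alpha\ge nF$ one could instead simply quote the Hall's ray $[nF,\infty)\subseteq\L_n$ of Proposition \ref{prop:hall}, but the construction above is uniform in $\alpha$.
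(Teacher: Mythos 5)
Your proof is correct and takes essentially the same route as the paper: for $n\M \subseteq \M_n$ both arguments invoke Proposition \ref{prop:attained} and move the optimizing primitive vector to $(1,0)$ by an integral transformation, and for $n\L \subseteq \L_n$ both pigeonhole the residues mod $n$ of a sequence of near-optimal approximations and apply a $\GL_2\ZZ$ (here $\SL_2\ZZ$) transformation to force the relevant denominators to be divisible by $n$. You simply spell out details the paper leaves implicit (the explicit matrix with prescribed bottom row, the asymptotic preservation of quality, and the bound $\lambda(g\xi)\le g\lambda(\xi)$).
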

  \begin{proof}
    Let $\alpha = \lambda(\xi) \in \L$. Then $\alpha$ is the limit of the $n$-qualities of a sequence of approximations $s_1/t_1, s_2/t_2, \ldots$ tending to $\xi$. Passing to a subsequence, we may assume that all the $s_i \equiv s$ and all the $t_i \equiv t$ are congruent modulo $n$. Then, applying a transformation in $\GL_2 \ZZ$ to $\xi$, we may assume that $t = 0$. Then $\lambda_n(\xi) = n\alpha$, since no sequence of approximations can do better than $s_1/t_1, s_2/t_2, \ldots.$ This proves that $n\L \subseteq \L_n$.

    The proof that $n\M \subseteq \M_n$ is similar but even easier, since by Proposition \ref{prop:attained}, any $\alpha \in \M$ is achieved by a $(\xi,\xi')$ such that the quality
    \[
      \frac{\Size{\xi - \xi'}}{\Size{s - t\xi} \Size{s - t\xi'}}
    \]
    attains its maximum at some $(s,t)$. Applying a $\GL_2\ZZ$-transformation, we may assume that $(s,t) = (1,0)$. We then observe that $\mu_{n}(\xi,\xi') = n\alpha$.
  \end{proof}
  As an immediate corollary, we get a Hall's ray for the $n$-spectra.
  \begin{prop} \label{prop:hall}
    $[nF, \infty) \subseteq \L_n \subseteq \M_n$, where $F = 4.5278\ldots$ is Fre\u{\i}man's constant, the least $F$ such that $[F, \infty) \subseteq \L$.
  \end{prop}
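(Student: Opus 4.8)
The plan is to obtain this as an immediate consequence of the two inclusions already in hand, namely $n\L \subseteq \L_n$ (Proposition \ref{prop:scale}) and $\L_n \subseteq \M_n$ (Proposition \ref{prop:LnMn}), together with the classical Hall's ray for the ordinary Lagrange spectrum. Recall that $F = 4.5278\ldots$ is \emph{defined} to be the least real number such that $[F,\infty) \subseteq \L$; the existence of such an $F$ (i.e.\ the fact that $\L$ contains a half-line) is Hall's theorem, with this exact constant computed by Fre\u{\i}man, as cited in Section \ref{sec:classical}.

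First I would observe the trivial set-theoretic identity $n\L \supseteq n[F,\infty) = [nF, \infty)$, where the last equality uses only that multiplication by the positive scalar $n$ is an order-preserving bijection of $\RR$. Then, chaining with Proposition \ref{prop:scale}, this gives $[nF,\infty) \subseteq n\L \subseteq \L_n$. Finally, applying Proposition \ref{prop:LnMn} yields $\L_n \subseteq \M_n$, so altogether $[nF,\infty) \subseteq \L_n \subseteq \M_n$, which is exactly the claim.

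There is essentially no obstacle here: the proposition is bookkeeping that packages Proposition \ref{prop:scale} with a known classical input, and the only thing to be careful about is citing Hall/Fre\u{\i}man correctly for the source inclusion $[F,\infty) \subseteq \L$. One could remark that this argument does not pin down the \emph{least} half-line contained in $\L_n$ or $\M_n$ — a priori $\L_n$ might contain $[G,\infty)$ for some $G < nF$ — so the statement as phrased is the honest and correct one to prove from what is available. If a sharper constant were wanted, one would need to analyze Fre\u{\i}man-type constructions directly in the $\Gamma^0(n)$ setting, but that is beyond the scope of this elementary section.
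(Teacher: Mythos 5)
Your argument is correct and is exactly how the paper obtains this result: Proposition \ref{prop:hall} is stated there as an immediate corollary of Proposition \ref{prop:scale} (scaling Fre\u{\i}man's ray $[F,\infty)\subseteq\L$ by $n$) together with $\L_n \subseteq \M_n$ from Proposition \ref{prop:LnMn}. Nothing further is needed.
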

  
  \section{Raney transducers} \label{sec:raney}

  \subsection{\texorpdfstring{$LR$}{LR}-sequences}
  $LR$-sequences are a beautiful and handy alternative way to think about continued fractions. They appear to have been discovered several times, going back to Hurwitz \cite[\textsection 5]{Hurwitz1894}, who used the signs $+$, $-$ instead of $R$, $L$ respectively. A pleasant exposition is given by Series \cite{SeriesIntelligencer}.
  
  Consider the linear fractional transformations (LFT's)
  \[
  L = \begin{bmatrix}
    1 & 0 \\
    1 & 1 
  \end{bmatrix}, \quad
  R = \begin{bmatrix}
    1 & 1 \\
    0 & 1 
  \end{bmatrix},
  \]
  that is,
  \[
  L(x) = \frac{x}{x+1}, \quad R(x) = x+1,
  \]
  which map the real interval $(0,\infty)$ to the subintervals $(0,1)$ and $(1, \infty)$ respectively. Given $\xi \in (0, \infty)$, repeatedly apply $L^{-1}$ or $R^{-1}$, as needed to keep the value positive, stopping if the value $1$ is reached. If $\xi$ is rational, this process yields a finite $LR$-expansion $R^{a_0} L^{a_1} R^{a_2} \cdots (L \text{ or } R)^{a_k}$ ($a_0 \geq 0$, all other $a_i \geq 1$), characterized by
  \[
  \xi = R^{a_0} L^{a_1} R^{a_2} \cdots (L \text{ or } R)^{a_k}(1) = [a_0, a_1, a_2, \ldots, a_{k-1}, a_k + 1].
  \]
  If $\xi$ is irrational, we instead get an infinite $LR$-expansion $R^{a_0} L^{a_1} R^{a_2} \cdots$, and
  \begin{equation} \label{eq:nested ranges}
    \{\xi\} = \bigintsec_{k \geq 0} R^{a_0} L^{a_1} R^{a_2} \cdots (L \text{ or } R)^{a_k}[0,\infty], \quad \xi = [a_0, a_1, a_2, \ldots].
  \end{equation}
  Conversely, any infinite $LR$-sequence represents a unique positive irrational, unless the sequence ends with a constant tail $L^\infty$ or $R^\infty$. For sequences with a constant tail, the intersection point as in \eqref{eq:nested ranges} is rational; each positive rational has two infinite $LR$-expansions formed by appending $LR^\infty$ or $RL^\infty$ to its canonical finite $LR$-expansion.
  
  Although we will not need it in this paper, we would be remiss to omit the following beautiful geometric interpretation of the $LR$-expansion. Given a positive real number $\xi$, consider the geodesic from $i$ to $\xi$ in the hyperbolic upper half plane (Figure \ref{figure:hyp_tiling1}). As it passes through the tessellation formed by applying $\SL_2\ZZ$ to the geodesic $(0,\infty)$, check whether it exits each successive triangle to the left $(L)$ or the right $(R)$. The resulting sequence is the $LR$-expansion of $\xi$, either finite or infinite.

  \newcommand{\drawArc}[2]{%
    \pgfmathparse{((#2) - (#1))/2} \let\radius\pgfmathresult
    \draw (#2,0) arc (0:180:\radius);
  }
  \newcommand{\periodicArc}[2]{%
    \drawArc{#1}{#2}
    \drawArc{#1 + 1}{#2 + 1}
  }
  \newcommand{\mirrorArc}[2]{%
    \drawArc{#1}{#2}
    \drawArc{1 - #2}{1 - #1}
  }
  \newcommand{\periodicMirrorArc}[2]{%
    \periodicArc{#1}{#2}
    \periodicArc{1 - #2}{1 - #1}
  }
  \begin{figure}[ht]
    \centering
    \begin{tikzpicture}[xscale=4,yscale=4]
      \def\m{4/3};
      \pgfmathparse{(\m + 1/(\m))/2} \let\radius\pgfmathresult
      \pgfmathparse{2*atan(\m)} \let\angle\pgfmathresult
      \draw[red] (\m,0) arc (0:\angle:\radius);
      \node[below] at (0,0) (0) {$0$};
      \node[below] at (1,0) (0) {$1$};
      \node[below] at (2,0) (0) {$2$};
      \node[below] at (\m,0) (0) {$\xi$};
      \node[above right] at (1,0.75) {$R$};
      \node[above right] at (1.2,0.45) {$L$};
      \node[above right] at (1.29,0.23) {$L$};
      \periodicArc{0}{1}
      \periodicMirrorArc{0/1}{1/2}
      \periodicMirrorArc{0/1}{1/3}
      \periodicMirrorArc{0/1}{1/4}
      \periodicMirrorArc{0/1}{1/5}
      \periodicMirrorArc{0/1}{1/6}
      \periodicMirrorArc{1/6}{1/5}
      \periodicMirrorArc{1/5}{1/4}
      \periodicMirrorArc{1/4}{1/3}
      \periodicMirrorArc{1/3}{1/2}
      \periodicMirrorArc{1/3}{2/5}
      \periodicMirrorArc{2/5}{1/2}
      \draw
      (0,0) -- (0,1.2)
      (1,0) -- (1,1.2)
      (0,0) -- (2,0) -- (2,1.2);
    \end{tikzpicture}
    \caption{The expansion $RLL$ of a rational number $\xi = 4/3$}
    \label{figure:hyp_tiling1}
  \end{figure}
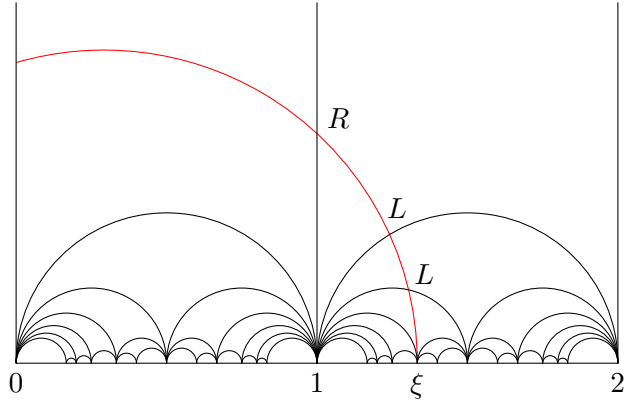
  
  \subsection{The slow Raney transducer}
  A \emph{Raney transducer} is a finite automaton, introduced by Raney in 1973 \cite{Raney_Automata}, associated to an LFT $\gamma \in \PGL_2(\QQ)$, that computes the $LR$-expansion of $\gamma(\xi)$ given that of $\xi$. Such transducers are useful in many questions related to continued fractions; see \cite{LiardetAlgebraic,RadaBounds}. See also Sol's expository thesis \cite{SolRaney}. In this paper, we are interested in the transformation $\gamma(\xi) = n\xi$.
  
  Here we construct a graph which we call the \emph{slow Raney transducer} because it computes the same transformations, albeit in a simpler and somewhat less efficient way. The naturalness of the construction will be useful for our proofs.
  
  Let
  \[
  \gamma(x) = \frac{a x + b}{c x + d}
  \]
  be an LFT. Assume that the coefficients $a,b,c,d$ are nonnegative integers and the determinant $n = ad - bc$ is positive, which implies that $\gamma$ maps $[0,\infty]$ into itself in an orientation-preserving way. Scale $a,b,c,d$ to be coprime nonnegative integers. Let $\xi \in [0,\infty]$ be a real number with $LR$-expansion $S_1S_2S_3\ldots$, where each letter $S_i \in \{L, R\}$. Suppose you want to compute the $LR$-expansion of $\gamma(\xi)$. When the first letter $S_1$ is revealed, it may or may not determine the first letter of $\gamma(\xi)$. The operative condition is whether
  \[
  \gamma\bigl(S_1([0,\infty])\bigr) \subseteq [0,1] \textor [1, \infty]
  \]
  If it is, then $\gamma(\xi)$ has an $LR$-expansion starting with $T_1 = L$ or $T_1 = R$ respectively, and the new remaining task is to compute the $LR$-expansion of
  \[
  \bigl(T_1^{-1} \circ \gamma \circ S_1\bigr) (\xi),
  \]
  where the parenthesized LFT again has nonnegative, coprime integer coefficients and determinant $n$. On the other hand, if $\gamma\bigl(S_1([0,\infty])\bigr)$ straddles the point $1$, then no letters of the $LR$-expansion of $\gamma(\xi)$ can be determined. This happens if $\gamma \circ S_1$ is given by a matrix
  \[
  M = \begin{bmatrix}
    a & b \\
    c & d
  \end{bmatrix}
  \]
  satisfying the inequalities
  \[
  a > c \textand b < d;
  \]
  in other words, $M$ is \emph{row-balanced} in Raney's terminology (i.e.\ it has no dominant row). We can reduce the computation of general rational LFT's to row-balanced ones. This leads to the following algorithm.
  
  \begin{alg}[Slow Raney Transducer]~ \label{alg:slow_Raney}
    \begin{itemize}
      \item[] \textbf{Input:} A row-balanced LFT $\gamma$, and the first letter $S$ of the $LR$-expansion of a positive real number $\xi = S(\xi')$.
      \item[] \textbf{Output:} A new row-balanced LFT $\gamma'$, and a word $W$ in the alphabet $\{L,R\}$ such that $\gamma(\xi) = W \gamma'(\xi')$.
      \item[] \textbf{Method:} \begin{enumerate}
        \item Start by trying $W_1 \leftarrow ()$ (the empty word) and $\gamma' = \gamma S^{-1}$.
        \item\label{it:mat} Write
        \[
        \gamma' = \begin{bmatrix}
          a & b \\
          c & d
        \end{bmatrix},
        \]
        with $a,b,c,d \geq 0$ and $ad - bc = n > 0$.
        \item If $a < c$ (implying $b < d$), set $W \leftarrow W L$, $\gamma' \leftarrow L^{-1} \gamma'$ and return to step \ref{it:mat}.
        \item If $b > d$ (implying $a > c$), set $W \leftarrow W R$, $\gamma' \leftarrow R^{-1} \gamma'$ and return to step \ref{it:mat}.
        \item Otherwise, $\gamma'$ is row-balanced. Return $W$ and $\gamma'$.
      \end{enumerate}
    \end{itemize}
  \end{alg}
  For fixed $n$, there is a finite set $\RB_n$ of row-balanced matrices, also called \emph{orphans} by Nathanson \cite{Nathanson_A_forest}, who produced a theory of continued fraction expansions of LFT's independently from Raney's. The number of orphans of each level $n$ has been tabulated in the OEIS \cite{orphan}. Hence we can encapsulate the results of Algorithm \ref{alg:slow_Raney} in a finite directed graph $\SRT_n$ whose nodes are $\RB_n$ and each node $\gamma$ has two outgoing edges
  \[
  \gamma \xrightarrow{S:W} \gamma'
  \]
  describing the output $(W, \gamma)$ of Algorithm \ref{alg:slow_Raney} to the input $(\gamma,S)$ for $S \in \{L, R\}$. We call this graph the \emph{slow Raney transducer of level $n$}. Some representative examples are shown in Figures \ref{fig:slow_Raney_2}--\ref{fig:slow_Raney_3}.

  \begin{figure}
    \centering
    \begin{tikzpicture}[->,>=stealth',auto,node distance=1.5in]
      \tikzstyle{every state}=[rectangle,inner sep=0]
      \node[state] (0) 
      {$\begin{matrix}
          2 & 0 \\ 0 & 1
        \end{matrix}$};
      \node[state] (0a) [right of=0]
      {$\begin{matrix}
          2 & 0 \\ 1 & 1
        \end{matrix}$};
      \node[state] (1) [below of=0a]
      {$\begin{matrix}
          1 & 0 \\ 0 & 2
        \end{matrix}$};
      \node[state] (1a) [below of=0]
      {$\begin{matrix}
          1 & 1 \\ 0 & 2
        \end{matrix}$};
      
      \path (0) edge [bend left] node {$L:-$ } (0a)
      (0a) edge [bend left] node {$L:L$ } (0)
      (0) edge [loop above] node {$R:RR$ } (0)
      (0a) edge [bend left]  node {$R:RL$} (1)
      (1) edge [bend left] node {$R:-$ } (1a)
      (1a) edge [bend left] node {$R:R$ } (1)
      (1) edge [loop below] node {$L:LL$ } (1)
      (1a) edge [bend left]  node {$L:LR$} (0);
    \end{tikzpicture}
    \caption{The slow Raney transducer $\SRT_2$.}
    \label{fig:slow_Raney_2}
  \end{figure}
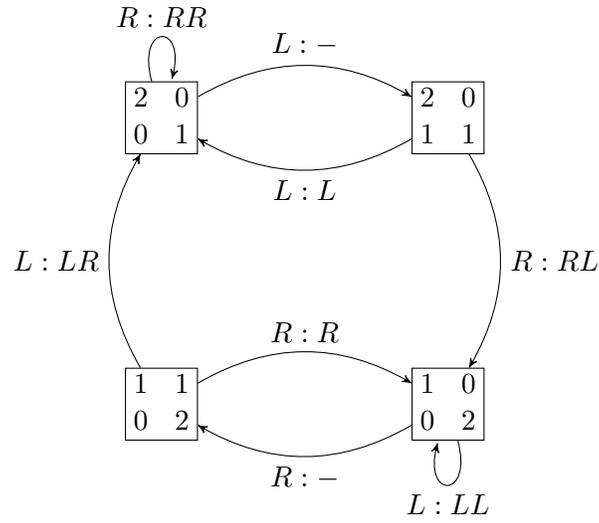
  
  \begin{figure}
    \centering
    \begin{tikzpicture}[->,>=stealth',auto,node distance=1.5in]
      \tikzstyle{every state}=[rectangle,inner sep=0]
      \node[state] (0) 
      {$\begin{matrix}
          3 & 0 \\ 0 & 1
        \end{matrix}$};
      \path (0) ++(-60:1.5in) node [state] (1) 
      {$\begin{matrix}
          2 & 1 \\ 1 & 2
        \end{matrix}$};
      \node[state]         (2) [right of=0]
      {$\begin{matrix}
          1 & 0 \\ 0 & 3
        \end{matrix}$};
      \node[state] (0a) [below left of=0]
      {$\begin{matrix}
          3 & 0 \\ 1 & 1  
        \end{matrix}$};
      \node[state] (0b) [above left of=0]
      {$\begin{matrix}
          3 & 0 \\ 2 & 1  
        \end{matrix}$};
      \node[state] (2a) [below right of=2]
      {$\begin{matrix}
          1 & 1 \\ 0 & 3
        \end{matrix}$};
      \node[state] (2b) [above right of=2]
      {$\begin{matrix}
          1 & 2 \\ 0 & 3  
        \end{matrix}$};
      
      \path
      (0) edge [] node[above left] {$L:-$}  (0a)
      (0a) edge [] node {$L:-$}  (0b)
      (0b) edge [] node[below left] {$L:L$}  (0)
      (0) edge [loop left] node {$R:R^3$}  (0)
      (0a) edge []  node[below] {$LR:R$} (1)
      (0b) edge []  node[pos=.3,above right] {$R:RLL$} (2)
      (1) edge [] node {$L:LR$} (0)
      (1) edge [] node[below right] {$R:RL$} (2)
      (2) edge [loop right] node {$L:L^3$}  (2)
      (2) edge [] node {$R:-$}  (2a)
      (2a) edge [] node[right] {$R:-$}  (2b)
      (2b) edge [] node {$R:R$}  (2)
      (2a) edge []  node[below] {$RL:L$} (1)
      (2b) edge []  node[pos=.3,above left] {$L:LRR$} (0);
    \end{tikzpicture}
    \caption{The slow Raney transducer $\SRT_3$.}
    \label{fig:slow_Raney_3}
  \end{figure}
  
  Given $\xi \in [0,\infty]$, let $S_1S_2S_3\ldots$ be the $LR$-expansion of $\xi$ (or one of the two $LR$-expansions if $\xi$ is a positive rational). To compute $\gamma(\xi)$ for $\gamma \in \RB_n$, use the $S_i$ as directions for a walk along the graph,
  \begin{equation} \label{eq:inf_walk}
  \gamma \xrightarrow{S_1:W_1} 
  \gamma_1 \xrightarrow{S_2:W_2} 
  \gamma_2 \xrightarrow{S_3:W_3} \cdots
  \end{equation}
  to obtain a sequence of words $W_i$. We claim that the concatenation $W_1 W_2 W_3 \ldots$ is the $LR$-expansion of $\gamma(\xi)$. First, we show that the concatenation $W_1 W_2 W_3 \ldots$ is indeed an infinite word:
  \begin{lem}\label{lem:nonempty}
    Any walk
    \[
      \gamma_0 \xrightarrow{S_1:W_1} 
      \gamma_1 \xrightarrow{S_2:W_2} \dots
      \xrightarrow{S_n:W_n} \gamma_n
    \]
    of length $n$ on $\SRT_n$ has at least one nonempty output word $W_i \neq ()$.
  \end{lem}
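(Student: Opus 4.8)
The plan is to track what a walk of length $n$ does to the interval $[0,\infty]$ and argue that if every output word were empty, the matrix would have to "shrink" $n$ times in a way that is incompatible with the determinant staying fixed at $n$. More precisely, suppose for contradiction that $W_1 = W_2 = \cdots = W_n = ()$. By the definition of the edges, this means that at each step we pass directly from the row-balanced matrix $\gamma_{i-1}$ to $\gamma_i = \gamma_{i-1} S_i^{-1}$, and that $\gamma_{i-1} S_i^{-1}$ is already row-balanced (no applications of $L^{-1}$ or $R^{-1}$ are needed in Algorithm \ref{alg:slow_Raney}). So we would have
\[
  \gamma_n = \gamma_0 \, S_1^{-1} S_2^{-1} \cdots S_n^{-1},
\]
with every partial product $\gamma_0 S_1^{-1} \cdots S_j^{-1}$ row-balanced and having nonnegative integer entries.

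First I would set up the key monotonicity: right-multiplication by $L^{-1} = \begin{bmatrix} 1 & 0 \\ -1 & 1\end{bmatrix}$ or by $R^{-1} = \begin{bmatrix} 1 & -1 \\ 0 & 1\end{bmatrix}$ strictly decreases the entry sum $a+b+c+d$ of a matrix with positive entries, as long as the result still has nonnegative entries. Indeed, $M L^{-1}$ replaces the first column $(a,c)$ by $(a-b, c-d)$, dropping the sum by $b+d > 0$; similarly $M R^{-1}$ drops the sum by $a+c>0$. So along the putative all-empty walk the entry sum of $\gamma_i$ is strictly decreasing in $i$. The remaining task is to show this is impossible for a walk of length $n$ — i.e., that a row-balanced matrix of level $n$ cannot have entry sum large enough to absorb $n$ strict decreases while staying a valid (nonnegative, determinant-$n$) matrix at every intermediate step.

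The cleanest way to close this, which I would try first, is a direct lower bound: a row-balanced matrix $\begin{bmatrix} a&b\\c&d\end{bmatrix}$ with $ad-bc=n$, $a\ge c\ge 0$, $b\le d$, $b\ge 0$ satisfies $a + b + c + d \le n + 2$ (with the extremes $\begin{bmatrix} n&0\\0&1\end{bmatrix}$, $\begin{bmatrix}1&0\\0&n\end{bmatrix}$ attaining it after accounting for the coprimality/scaling normalization, or more carefully $a+b+c+d$ is at most some explicit linear function of $n$). Granting such a bound $B(n)$, and noting the entry sum is always $\ge 4$ once the matrix is nontrivial (or handling the degenerate small cases by hand), a walk of length $n$ with all empty outputs forces $B(n) - n \ge 4$, i.e. a bound that fails for all $n \ge 1$ — contradiction. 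Actually the honest statement to prove is just: the number of strict unit-decreases in entry sum available before the sum drops below its minimum is strictly less than $n$; since each empty-output step costs at least one such decrease and is preceded by the relation $\gamma_i = \gamma_{i-1}S_i^{-1}$, not all $n$ steps can be empty.

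The main obstacle I anticipate is pinning down exactly how much the entry sum can decrease per empty step versus how large it can start, because the normalization ("scale $a,b,c,d$ to be coprime") and the requirement that intermediate matrices be row-balanced (not merely nonnegative) both enter. In particular I would need the observation that once a partial product stops being row-balanced, Algorithm \ref{alg:slow_Raney} emits a letter — so the "empty-output" hypothesis is genuinely the statement that all $n$ partial products $\gamma_0 S_1^{-1}\cdots S_j^{-1}$ lie in $\RB_n$. A slicker alternative, which I would keep in reserve, is geometric: each $S_i^{-1}$ pulls back the interval $\gamma_{i-1}([0,\infty])$, and emptiness of $W_i$ means $\gamma_{i-1}(S_i([0,\infty]))$ still straddles $1$; tracking the shrinking subinterval $R^{a_0}L^{a_1}\cdots(L\text{ or }R)^{a_j}[0,\infty]$ from \eqref{eq:nested ranges} and comparing its image length to the $O(1)$-separation forced by row-balancedness should give the same contradiction after $n$ steps. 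Either way, the heart of the argument is the pigeonhole-style counting bound relating "length $n$" to "level $n$."
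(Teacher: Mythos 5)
There is a genuine gap, and in fact two problems. First, a sign issue: an empty-output step of the slow transducer goes from $\gamma_{i-1}$ to $\gamma_i = \gamma_{i-1}S_i$ (right-multiplication by the \emph{input} letter, as the relation $\gamma_0 S_1\cdots S_i = W_1\cdots W_i\gamma_i$ and the edges in Figures \ref{fig:slow_Raney_2}--\ref{fig:slow_Raney_3} show; the ``$\gamma' = \gamma S^{-1}$'' in step 1 of Algorithm \ref{alg:slow_Raney} is a typo — $\gamma S^{-1}$ would have negative entries). So along an all-empty walk the entry sum strictly \emph{increases}, it does not decrease; the inverse letters $L^{-1},R^{-1}$ only act on the left, when output is emitted. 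Second, and more seriously, even after flipping the direction your counting does not reach the stated bound. Your proposed inequality $a+b+c+d\le n+2$ for row-balanced matrices is false: $\bigl[\begin{smallmatrix}1 & n-1\\ 0 & n\end{smallmatrix}\bigr]$ lies in $\RB_n$ with entry sum $2n$. The honest potential argument gives: start sum $\ge \min_{\RB_n}(\text{sum})$, each empty step adds at least $1$ (a column sum), and the sum stays $\le 2n$; but the minimum starting sum can be of size about $2\sqrt{n}$ (e.g.\ $\bigl[\begin{smallmatrix}k & 0\\ 0 & k\end{smallmatrix}\bigr]$ when $n=k^2$), so this only bounds the number of consecutive empty outputs by roughly $2n$, not by $n-1$ as the lemma asserts. (Such a weaker bound would still suffice for the application in Proposition \ref{prop:Ln_closed}, with a worse constant, but it does not prove the statement as given; note also that the bound $n$ is sharp, as the walk starting at $\bigl[\begin{smallmatrix}1 & 0\\ 0 & n\end{smallmatrix}\bigr]$ with inputs $R,R,\ldots$ has exactly $n-1$ empty outputs.)

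The paper's proof uses a different, arithmetic observation that your ``reserve'' geometric idea gestures at but never pins down. If all $W_i$ are empty, then every $\gamma_i=\gamma_0S_1\cdots S_i$ is row-balanced, so $1\in\gamma_0 S_1\cdots S_n(0,\infty)$, i.e.\ the number $\alpha=\gamma_0^{-1}(1)=\frac{d-b}{a-c}$ has $LR$-expansion beginning with the $n$ letters $S_1\cdots S_n$. But $(a-c)(d-b)\le ad-bc=n$, so $\alpha$ is a rational of small height whose finite $LR$-expansion has length at most $n-1$ — contradiction. The decisive point is that row-balancedness at every stage forces the single explicit rational $\gamma_0^{-1}(1)$ to have a long expansion, and its height is controlled by the determinant; ``image length versus $O(1)$-separation'' alone will not produce the sharp count. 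To repair your write-up you would either have to prove a genuinely sharper potential inequality tied to the determinant, or switch to the height argument above.
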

  \begin{proof}
    Suppose that $W_1,\ldots, W_n$ are all empty. This means that
    \[
      1 \in \gamma_n (0,\infty) = \gamma_0 S_1 S_2 \cdots S_n (0,\infty).
    \]
    In particular, the number $\alpha = \gamma_0^{-1}(1)$ has unique $LR$-expansion starting with $S_1 S_2 \cdots S_n$. But, letting $\gamma = \bigl[\begin{smallmatrix}
        a & b \\ c & d
    \end{smallmatrix}\bigr]$, the rational number $\alpha = \frac{d - b}{a - c}$ has numerator and denominator at most $n$ and hence a finite $LR$-expansion of length at most $n-1$, which is a contradiction.
  \end{proof}
  
  Now, with $W_i$ as in \eqref{eq:inf_walk}, the number $\eta$ with $LR$-expansion $W_1 W_2 W_3 \ldots$ is characterized by
  \begin{align*}
    \{ \eta \} 
    &= \bigintsec_i W_1 W_2 \cdots W_i [0,\infty] \\
    &\subseteq \bigintsec_i W_1 W_2 \cdots W_i \gamma_i [0,\infty] \\
    &= \bigintsec_i \gamma S_1 S_2 \cdots S_i [0, \infty] \\
    &= \gamma \left(\bigintsec_i S_1 S_2 \cdots S_i [0, \infty] \right) \\
    &= \gamma (\{\xi\}) \\
    &= \{\gamma(\xi)\}.
  \end{align*}
  
  \subsection{The fast Raney transducer}
  As $n$ grows, the number of nodes grows rather quickly. A more compact alternative is a graph $\T_n$, which we call the \emph{(fast) Raney transducer,} whose nodes are the \emph{doubly balanced} matrices $\DB_n$ satisfying the inequalities
  \[
    a > b, \quad c < d, \quad a > c, \quad b < d
  \]
  (that is, neither row and neither column is dominant). Each edge has a label $V:W$ where both the input $V$ and the output $W$ are finite words in the alphabet $\{L,R\}$. As before, an edge
  \[
    \gamma \xrightarrow{V:W} \gamma'
  \]
  has the property that $\gamma \circ V = W \circ \gamma'$. The input words $V$ are no longer just one letter, but the input words emanating from any node form a \emph{base} for $LR$-sequences, that is, any infinite word in the letters $L$ and $R$ starts with exactly one of them, such as $\{L, RL, RRL, RRR\}$. Further details are found in Raney \cite{Raney_Automata}. The set $\DB_n$ is of manageable size; for $n = p$ prime, we have $\size{\DB_n} = p$. Examples are shown in Figures \ref{fig:Raney2}--\ref{fig:Raney13} (further examples can be found in \cite{Raney_Automata}). The reader is invited to prove the following construction of the fast transducer from the slow one (we will not need it):
  \begin{exercise}~
    \begin{enumerate}[(a)]
      \item A node $\gamma \in \RB_n$ of $\SRT_n$ has indegree $2$ or more if and only if $\gamma \in \DB_n$.
      \item Starting with $\SRT_n$, we repeatedly perform the following operation: Pick any node $\gamma$ with indegree $1$, merge it into its predecessor $\gamma'$, and update the labels of the edges out of $\gamma$ as shown:
      \[
        \gamma' \xrightarrow{V_1:W_1} \gamma \xrightarrow{V_2:W_2} \delta \quad \rightsquigarrow \quad
        \gamma' \xrightarrow{V_1V_2:W_1W_2} \delta
      \]
      When no nodes remain with indegree $1$, the resulting graph is $\T_n$.
    \end{enumerate}
  \end{exercise}

  \ignore{
    The arrangements of the nodes chosen emphasize the symmetry $\bigl[\begin{smallmatrix}
    a & b \\ c & d
  \end{smallmatrix}\bigr] \mapsto \bigl[\begin{smallmatrix}
    d & c \\ b & a
  \end{smallmatrix}\bigr] $, that is, conjugation by $\xi \mapsto 1/\xi$, which swaps the letters $L$ and $R$ in both input and output strings.

, and an overwhelming proportion of these are nodes with indegree $1$. If we merge each such node with its unique predecessor, we obtain a smaller graph, called a \emph{transducer} in Raney \cite{Raney_Automata}. 

    For instance, the Raney transducer of level $4$ computes the LFT
    \[
    \gamma(\xi) = \frac{2\xi + 1}{2} = \xi + \frac{1}{2},
    \]
    the subject of a recent MathOverflow question \cite{128676}.

    The Raney transducer has two symmetries. The first is conjugation by $\xi \mapsto 1/\xi$, which was noted earlier. The second, subtler symmetry sends each node $\bigl[\begin{smallmatrix}
    a & b \\ c & d
  \end{smallmatrix}\bigr]$ to $\bigl[\begin{smallmatrix}
    d & b \\ c & a
  \end{smallmatrix}\bigr] $, reverses arrows, and swaps input with output strings as well as reversing the order of the letters in each string.
  }

  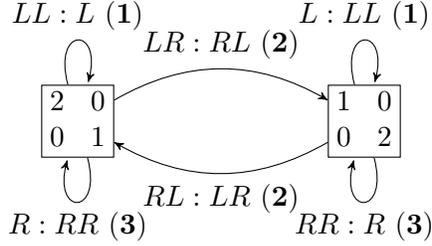
\begin{figure}
    \centering
    \begin{tikzpicture}[->,>=stealth',auto,node distance=1.5in]
      \tikzstyle{every state}=[rectangle,inner sep=0]
      \node[state] (0) 
      {$\begin{matrix}
          2 & 0 \\ 0 & 1
        \end{matrix}$};
      \node[state]         (1) [right of=0]
      {$\begin{matrix}
          1 & 0 \\ 0 & 2
        \end{matrix}$};
      
      \path (0) edge [loop above] node {$LL:L$ (\textbf{1})}  (0)
      edge [loop below] node {$R:RR$ (\textbf{3})}  (0)
      edge [bend left]  node {$LR:RL$ (\textbf{2})} (1)
      (1) edge [loop above] node {$L:LL$ (\textbf{1})}  (1)
      edge [loop below] node {$RR:R$ (\textbf{3})}  (1)
      edge [bend left]  node {$RL:LR$ (\textbf{2})} (0);
    \end{tikzpicture}
    \caption{The Raney transducer $\T_2$. The parenthesized boldface numbers are the corresponding \emph{Romik digits}, used in \cite{RomikDynamics,KimSim}. The lowest point $\min \L_2 = 2\sqrt{2}$ arises by following the $2$-cycle of edges marked (\textbf{2}).}
    \label{fig:Raney2}
  \end{figure}
  
  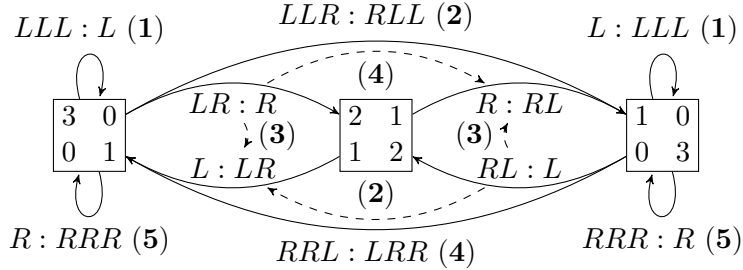
\begin{figure}
    \centering
    \begin{tikzpicture}[->,>=stealth',auto,node distance=1.5in]
      \tikzstyle{every state}=[rectangle,inner sep=0]
      \node[state] (0) 
      {$\begin{matrix}
          3 & 0 \\ 0 & 1
        \end{matrix}$};
      \node[state]         (1) [right of=0]
      {$\begin{matrix}
          2 & 1 \\ 1 & 2
        \end{matrix}$};
      \node[state]         (2) [right of=1]
      {$\begin{matrix}
          1 & 0 \\ 0 & 3
        \end{matrix}$};
      
      \path (0) edge [loop above] node {$LLL:L$ (\textbf{1})}  (0)
      edge [loop below] node {$R:RRR$ (\textbf{5})}  (0)
      edge [bend left]  node[below] (lrxr) {$LR:R$} (1)
      edge [bend left]  node {$LLR:RLL$ (\textbf{2})} (2)
      (1) edge [bend left] node[above] (lxlr) {$L:LR$} (0)
      (1) edge [bend left] node[below] (rxrl) {$R:RL$} (2)
      (2) edge [loop above] node {$L:LLL$ (\textbf{1})}  (2)
      edge [loop below] node {$RRR:R$ (\textbf{5})}  (2)
      edge [bend left]  node[above] (rlxl) {$RL:L$} (1)
      edge [bend left]  node {$RRL:LRR$ (\textbf{4})} (0);

      \path (lrxr) edge [dashed,bend left] node[below] {(\textbf{4})} (rxrl);
      \path (rlxl) edge [dashed,bend left] node[above] {(\textbf{2})} (lxlr);
      \path (lrxr) edge [dashed,bend left] node[right] {(\textbf{3})} (lxlr);
      \path (rlxl) edge [dashed,bend left] node[left]  {(\textbf{3})} (rxrl);
      
    \end{tikzpicture}
    \caption{The Raney transducer $\T_3$. The lowest point $\min \L_3 = 2\sqrt{3}$ arises by following either of the two small $2$-cycles involving the middle node.}
    \label{fig:Raney3}
  \end{figure}
  
  \begin{figure}
    \centering
    \begin{tikzpicture}[,>=stealth',node distance=0.75in]
      \tikzstyle{every state}=[inner sep=0]
      \node[state] (2)               {$2$};
      \node[state, right of=2] (5)   {$5$};
      \node[state, right of=5]  (3)  {$3$};
      \node[state, below left=1.125in and 0.375in of 2, anchor=south west]  (0)  {$0$};
      \node[state, right of=0]  (1) {$1$};
      \node[state, right of=1] (11)  {$11$};
      \node[state, right of=11] (12)   {$12$};
      \node[state, below left=1.125in and 0.375in of 1, anchor=south west]  (10) {$10$};
      \node[state, right of=10] (7)   {$7$};
      \node[state, right of=7]  (9)  {$9$};
      \node[state, below left of=0]  (6)  {$6$};
      \node[state, below right of=12] (8)   {$8$};
      \node[state, above left of=0]  (4)  {$4$};
      
      \path
      (0) edge[double,loop left] (0)
      (0) edge[double] (1)
      (0) edge[double] (2)
      (0) edge[double] (3)
      (0) edge[double] (4)
      (0) edge[double] (5)
      (0) edge[double] (6)
      (0) edge[double] (7)
      (0) edge[double] (8)
      (0) edge[double] (9)
      (0) edge[double] (10)
      (0) edge[double,bend left] (11)
      (0) edge[double,bend right] (12)
      (1) edge[->] (2)
      (1) edge[->] (3)
      (1) edge[->] (4)
      (1) edge[->] (5)
      (1) edge[double] (11)
      (1) edge[double,bend left] (12)
      (2) edge[->,bend left] (3)
      (2) edge[double] (5)
      (2) edge[->] (11)
      (3) edge[->] (5)
      (3) edge[->] (11)
      (4) edge[->] (11)
      (5) edge[->] (11)
      (7) edge[->] (1)
      (7) edge[double] (10)
      (8) edge[->] (1)
      (9) edge[->] (1)
      (9) edge[->] (7)
      (10) edge[->] (1)
      (10) edge[->,bend right] (9)
      (11) edge[->] (7)
      (11) edge[->] (8)
      (11) edge[->] (9)
      (11) edge[->] (10)
      (12) edge[double] (2)
      (12) edge[double] (3)
      (12) edge[double,bend right] (4)
      (12) edge[double] (5)
      (12) edge[double,bend left] (6)
      (12) edge[double] (7)
      (12) edge[double] (8)
      (12) edge[double] (9)
      (12) edge[double] (10)
      (12) edge[double] (11)
      (12) edge[double,loop right] (12);
    \end{tikzpicture}
    
    \begin{tabular}{|c@{}r@{${}:{}$}l@{}c|}
      From & In & Out & To \\ \hline
      $0$ & $L^{13}$ & $L$ & $0$ \\
      $0$ & $R$ & $R^{13}$ & $0$ \\
      $0$ & $LR$ & $R^6$ & $1$ \\
      $0$ & $L^7R$ & $RLR$ & $2$ \\
      $0$ & $L^9R$ & $RLL$ & $3$ \\
      $0$ & $L^8R$ & $RL$ & $4$ \\
      $0$ & $L^{10}R$ & $RLLL$ & $5$ \\
      $0$ & $L^6R$ & $R$ & $6$ \\
      $0$ & $LLR$ & $R^4$ & $7$ \\
      $0$ & $L^4R$ & $RR$ & $8$ \\
      $0$ & $LLLR$ & $RRR$ & $9$ \\
      $0$ & $L^5R$ & $RRL$ & $10$ \\
      $0$ & $L^{11}R$ & $RL^5$ & $11$ \\
      $0$ & $L^{12}R$ & $RL^{12}$ & $12$ \\ \hline
      $1$ & $L^6$ & $LR$ & $0$ \\
      $1$ & $R$ & $RR$ & $2$ \\
      $1$ & $LLR$ & $RL$ & $3$ \\
      $1$ & $LR$ & $R$ & $4$ \\
      $1$ & $LLLR$ & $RLL$ & $5$ \\
      $1$ & $L^4R$ & $RL^4$ & $11$ \\
      $1$ & $L^5R$ & $RL^{11}$ & $12$ \\ \hline
      $2$ & $L^4$ & $LRR$ & $0$ \\
      $2$ & $R$ & $R$ & $3$ \\
      $2$ & $LR$ & $RL$ & $5$ \\
      $2$ & $LLR$ & $RLLL$ & $11$ \\
      $2$ & $LLLR$ & $RL^{10}$ & $12$ \\
    \end{tabular}
    \begin{tabular}{|c@{}r@{${}:{}$}l@{}c|}
      From & In & Out & To \\ \hline
      $3$ & $LLL$ & $LRRR$ & $0$ \\
      $3$ & $R$ & $R$ & $5$ \\
      $3$ & $LR$ & $RLL$ & $11$ \\
      $3$ & $LLR$ & $RL^9$ & $12$ \\ \hline
      $4$ & $LL$ & $LR^4$ & $0$ \\
      $4$ & $R$ & $RL$ & $11$ \\
      $4$ & $LR$ & $RL^8$ & $12$ \\ \hline
      $5$ & $RLL$ & $LR^5$ & $0$ \\
      $5$ & $L$ & $L$ & $2$ \\
      $5$ & $RR$ & $R$ & $11$ \\
      $5$ & $RLR$ & $RL^7$ & $12$ \\ \hline
      $6$ & $L$ & $LR^6$ & $0$ \\
      $6$ & $R$ & $RL^6$ & $12$ \\ \hline
      $7$ & $LRL$ & $LR^7$ & $0$ \\
      $7$ & $LL$ & $L$ & $1$ \\
      $7$ & $R$ & $R$ & $10$ \\
      $7$ & $LRR$ & $RL^5$ & $12$ \\ \hline
      $8$ & $RL$ & $LR^8$ & $0$ \\
      $8$ & $L$ & $LR$ & $1$ \\
      $8$ & $RR$ & $RL^4$ & $12$ \\ \hline
      $9$ & $RRL$ & $LR^{9}$ & $0$ \\
      $9$ & $RL$ & $LRR$ & $1$ \\
      $9$ & $L$ & $L$ & $7$ \\
      $9$ & $RRR$ & $RLLL$ & $12$
    \end{tabular}
    \begin{tabular}{|c@{}r@{${}:{}$}l@{}c|}
      From & In & Out & To \\ \hline
      $10$ & $RRRL$ & $LR^{10}$ & $0$ \\
      $10$ & $RRL$ & $LRRR$ & $1$ \\
      $10$ & $RL$ & $LR$ & $7$ \\
      $10$ & $L$ & $L$ & $9$ \\
      $10$ & $R^4$ & $RLL$ & $12$ \\ \hline
      $11$ & $R^5L$ & $LR^{11}$ & $0$ \\
      $11$ & $R^4L$ & $LR^4$ & $1$ \\
      $11$ & $RRRL$ & $LRR$ & $7$ \\
      $11$ & $RL$ & $L$ & $8$ \\
      $11$ & $RRL$ & $LR$ & $9$ \\
      $11$ & $L$ & $LL$ & $10$ \\
      $11$ & $R^6$ & $RL$ & $12$ \\ \hline
      $12$ & $R^{12}L$ & $LR^{12}$ & $0$ \\
      $12$ & $R^{11}L$ & $LR^5$ & $1$ \\
      $12$ & $R^5L$ & $LLR$ & $2$ \\
      $12$ & $RRRL$ & $LLL$ & $3$ \\
      $12$ & $R^4L$ & $LL$ & $4$ \\
      $12$ & $RRL$ & $L^4$ & $5$ \\
      $12$ & $R^6L$ & $L$ & $6$ \\
      $12$ & $R^{10}L$ & $LRRR$ & $7$ \\
      $12$ & $R^8L$ & $LR$ & $8$ \\
      $12$ & $R^{9}L$ & $LRR$ & $9$ \\
      $12$ & $R^7L$ & $LRL$ & $10$ \\
      $12$ & $RL$ & $L^6$ & $11$ \\
      $12$ & $L$ & $L^{13}$ & $12$ \\
      $12$ & $R^{13}$ & $R$ & $12$
    \end{tabular}
    
    \vspace*{0.5em}
    \begin{tabular}{@{}r@{ }|@{}c@{}c@{}c@{}c@{}c@{}c@{}c@{}c@{}c@{}c@{}c@{}c@{}c@{}}
      Label & $0$ & $1$ & $2$ & $3$ & $4$ & $5$ & $6$ & $7$ & $8$ & $9$ & $10$ & $11$ & $12$ \\ \cline{1-1}
      Matrix & $\begin{bmatrix}
        13 & 0 \\
        0 & 1
      \end{bmatrix}$ &
      $\begin{bmatrix}
        7 & 1 \\
        1 & 2
      \end{bmatrix}$ &
      $\begin{bmatrix}
        5 & 2 \\
        1 & 3
      \end{bmatrix}$ &
      $\begin{bmatrix}
        4 & 3 \\
        1 & 4
      \end{bmatrix}$ &
      $\begin{bmatrix}
        5 & 4 \\
        3 & 5
      \end{bmatrix}$ &
      $\begin{bmatrix}
        3 & 2 \\
        1 & 5
      \end{bmatrix}$ &
      $\begin{bmatrix}
        7 & 6 \\
        6 & 7
      \end{bmatrix}$ &
      $\begin{bmatrix}
        5 & 1 \\
        2 & 3
      \end{bmatrix}$ &
      $\begin{bmatrix}
        5 & 3 \\
        4 & 5
      \end{bmatrix}$ &
      $\begin{bmatrix}
        4 & 1 \\
        3 & 4
      \end{bmatrix}$ &
      $\begin{bmatrix}
        3 & 1 \\
        2 & 5
      \end{bmatrix}$ &
      $\begin{bmatrix}
        2 & 1 \\
        1 & 7
      \end{bmatrix}$ &
      $\begin{bmatrix}
        1 & 0 \\
        0 & 13
      \end{bmatrix}$
    \end{tabular}
    \caption{The Raney transducer $\T_{13}$.}
    \label{fig:Raney13}
  \end{figure}

  \subsection{Connection to Romik expansions}
  In \cite{KimSim} and \cite{ChaEis}, respectively, \emph{Romik expansions} of real numbers are used to understand the spectra here denoted $\L_2$ and $\L_3$. This notion is closely related to Raney transducers of small level. For $\T_2$ (Figure \ref{fig:Raney2}), the three outgoing edges of each node correspond to the Romik digits \textbf{1}, \textbf{2}, and \textbf{3}. For $\T_3$ (Figure \ref{fig:Raney3}), the generalized Romik digits \textbf{1}--\textbf{5} of \cite{ChaEis} correspond either to edges or to two-edge walks (the latter indicated by the dashed arrows) starting and ending at one of the two hub nodes $\bigl[\begin{smallmatrix}
    3 & 0 \\ 0 & 1
  \end{smallmatrix}\bigr]$, $\bigl[\begin{smallmatrix}
    1 & 0 \\ 0 & 3
  \end{smallmatrix}\bigr]$. Any walk on $\T_3$ can be translated into a sequence of Romik digits thanks to the fact that the walk must return at least every two steps to one of these two hub nodes.

  For larger $n$, the picture is quite different, as shown in Figure \ref{fig:Raney13}. Here there are many infinite walks not meeting the two hub nodes labeled $0$ and $12$. In particular, the lowest point $\min \L_{13} = \sqrt{221}/5$ corresponds to a (non-simple!) cycle
  \[
    2 \xrightarrow{R:R} 3 \xrightarrow{R:R} 5 \xrightarrow{L:L} 2 \xrightarrow{LR:RL} 5 \xrightarrow{L:L} 2 
  \]
  (or to its mirror image $10 \to 9 \to 7 \to 10 \to 7 \to 10$) that does not use the hub nodes $0$, $12$ at all. Consequently, we do not expect a system of Romik digits to shed light on $\L_n$ for $n > 3$.

  \begin{rem}
    The dichotomy between the $n \leq 3$ and $n \geq 4$ cases can also be explained geometrically in terms of the images of the geodesic $0$---$i\infty$ under $\Gamma_0(n)$. For $n = 1,2,3$ the resulting geodesics tessellate the hyperbolic plane by congruent regular $3$-, $4$-, and $6$-gons, respectively (see Figures \ref{figure:hyp_tiling1}, \ref{figure:hyp_tiling2}, \ref{figure:hyp_tiling3}), suggesting encodings with $2$, $3$, and $5$ symbols respectively (because a line entering a tile at one side can exit on any of the other sides). For $n \geq 4$, the resulting geodesics do not form regions with finitely many sides (see Figure \ref{figure:hyp_tiling5}).
  \end{rem}

  \begin{figure}[ht]
    \centering
    \begin{tikzpicture}[xscale=7,yscale=7]
      \node[below] at (0,0) (0) {$0$};
      \node[below] at (1,0) (0) {$1$};
      \mirrorArc{0/1}{1/2}
      \mirrorArc{0/1}{1/4}
      \mirrorArc{0/1}{1/6}
      \mirrorArc{0/1}{1/8}
      \mirrorArc{0/1}{1/10}
      \mirrorArc{0/1}{1/12}
      \mirrorArc{1/12}{1/11}
      \mirrorArc{1/11}{1/10}
      \mirrorArc{1/10}{1/9}
      \mirrorArc{1/9}{1/8}
      \mirrorArc{1/8}{1/7}
      \mirrorArc{1/7}{1/6}
      \mirrorArc{1/6}{1/5}
      \mirrorArc{1/5}{1/4}
      \mirrorArc{1/4}{1/3}
      \mirrorArc{1/3}{1/2}
      \mirrorArc{1/3}{3/8}
      \mirrorArc{3/8}{2/5}
      \mirrorArc{2/5}{1/2}
      \draw
      (0,0) -- (0,0.4)
      (0,0) -- (1,0) -- (1,0.4);
    \end{tikzpicture}
    \caption{Tiling by hyperbolic squares corresponding to $\Gamma^0(2)$}
    \label{figure:hyp_tiling2}
  \end{figure}
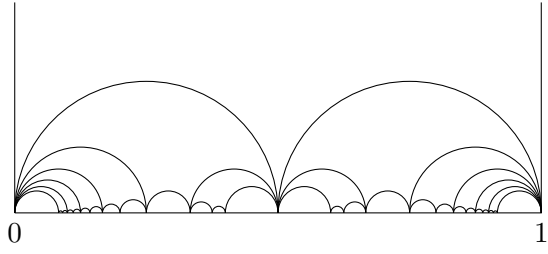

  \begin{figure}[ht]
    \centering
    \begin{tikzpicture}[xscale=7,yscale=7]
      \node[below] at (0,0) (0) {$0$};
      \node[below] at (1,0) (0) {$1$};
      \mirrorArc{0/1}{1/3}
      \mirrorArc{0/1}{1/6}
      \mirrorArc{0/1}{1/9}
      \mirrorArc{1/9}{1/8}
      \mirrorArc{1/8}{2/15}
      \mirrorArc{2/15}{1/7}
      \mirrorArc{1/7}{1/6}
      \mirrorArc{1/6}{1/5}
      \mirrorArc{1/5}{2/9}
      \mirrorArc{2/9}{1/4}
      \mirrorArc{1/4}{1/3}
      \mirrorArc{1/3}{1/2}
      \mirrorArc{1/3}{2/5}
      \mirrorArc{2/5}{5/12}
      \mirrorArc{5/12}{3/7}
      \mirrorArc{3/7}{4/9}
      \mirrorArc{4/9}{1/2}
      \draw
      (0,0) -- (0,0.4)
      (0,0) -- (1,0) -- (1,0.4);
    \end{tikzpicture}
    \caption{Tiling by hyperbolic regular hexagons corresponding to $\Gamma^0(3)$}
    \label{figure:hyp_tiling3}
  \end{figure}
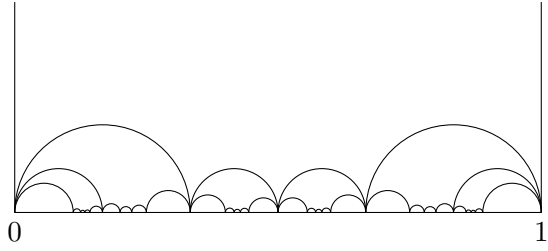

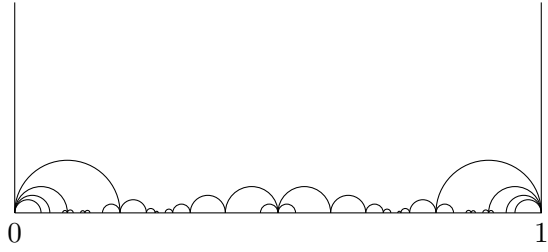
\begin{figure}[ht]
  \centering
  \begin{tikzpicture}[xscale=7,yscale=7]
    \node[below] at (0,0) (0) {$0$};
    \node[below] at (1,0) (0) {$1$};
    \mirrorArc{0/1}{1/5}
    \mirrorArc{0/1}{1/10}
    \mirrorArc{0/1}{1/15}
    \mirrorArc{0/1}{1/20}
    \mirrorArc{1/11}{1/10}
    \mirrorArc{1/10}{1/9}
    \mirrorArc{1/6}{1/5}
    \mirrorArc{1/5}{1/4}
    \mirrorArc{1/3}{2/5}
    \mirrorArc{2/5}{1/2}
    \mirrorArc{2/7}{3/10}
    \mirrorArc{3/10}{1/3}
    \mirrorArc{2/15}{1/7}
    \mirrorArc{1/8}{2/15}
    \mirrorArc{4/15}{3/11}
    \mirrorArc{1/4}{4/15}
    \mirrorArc{7/15}{1/2}
    \draw
    (0,0) -- (0,0.4)
    (0,0) -- (1,0) -- (1,0.4);
  \end{tikzpicture}
  \caption{Tiling  corresponding to $\Gamma^0(5)$. The tiles have infinitely many sides.}
  \label{figure:hyp_tiling5}
\end{figure}
  
  \section{Closedness of the \texorpdfstring{$n$}{n}-Lagrange spectrum}
  \label{sec:closed}

  \begin{prop} \label{prop:Ln_closed}
    For each $n \geq 1$, $\L_n$ is closed.
  \end{prop}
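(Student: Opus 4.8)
The plan is to express $\lambda_n$ in terms of Raney transducers so that closedness of $\L_n$ becomes a compactness statement about walks, and then to adapt the classical argument that $\L$ is closed. By Proposition~\ref{prop:lambda_n->lambda}, $\lambda_n(\xi)=\max_{d\mid n}\lambda(d\xi)$, and by Proposition~\ref{prop:quality}(c) each $\lambda(d\xi)$ is the $\limsup$ of the qualities of the cuts of the continued fraction of $d\xi$; see~\eqref{eq:approx}. Feeding the $LR$-expansion of $\xi$ into the finite product automaton $\T^{(n)}:=\prod_{d\mid n}\SRT_d$ yields, in its $d$-th component, the $LR$-expansion of $d\xi$. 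By Lemma~\ref{lem:nonempty} each component emits at least one letter in any $n$ consecutive steps, so the position in the $LR$-expansion of $d\xi$ is a function of the number of transducer steps with bounded distortion; and by Lemma~\ref{lem:sensitive} the quality of a cut of $d\xi$ depends, up to a geometrically decaying error, only on a bounded window of the walk of $\T^{(n)}$ around the corresponding step. Hence $\lambda_n(\xi)$ is the supremum, over all components $d\mid n$ and all positions, of a quantity ``the quality at a step'' that is a uniformly continuous function of the $\T^{(n)}$-walk near that step. Proving $\L_n$ closed thus amounts to: given bi-infinite walks on $\T^{(n)}$ whose suprema of step-qualities approach $\alpha$, produce a one-sided walk issued from the canonical start state whose $\limsup$ of step-qualities, in some component, equals $\alpha$ while staying $\le\alpha$ in every component.

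First I would extract a bi-infinite walk. Let $\alpha_k=\lambda_n(\xi_k)\to\alpha$. After passing to a subsequence, fix a divisor $g\mid n$ with $\alpha_k=\lambda(g\xi_k)$, and choose a cut of $g\xi_k$ of quality $>\alpha_k-1/k$; let $j_k$ be the corresponding step of the walk of $\T^{(n)}$ on $\xi_k$. Since these qualities are bounded, the leading terms near the cut are bounded, so the window of the $\xi_k$-walk centered at step $j_k$ ranges over a finite (hence precompact) set; re-centering at $j_k$ and letting $j_k\to\infty$, a diagonal extraction over growing windows yields a bi-infinite walk $\Sigma$ on $\T^{(n)}$ such that the step-quality of $\Sigma$ at the central step (in component $g$) equals $\alpha$, while every step-quality of $\Sigma$, in every component, is $\le\alpha$ --- the latter because each such step-quality is a limit of step-qualities of $d\xi_k$, all bounded by $\lambda(d\xi_k)\le\lambda_n(\xi_k)=\alpha_k$.

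Next I would force the extremal behaviour to recur and then return to an honest real number. Here I would work inside the compact, shift-invariant space of bi-infinite $\T^{(n)}$-walks whose every step-quality is $\le\alpha$; on it, the ``sup of step-qualities'' function $\Phi$ is lower semicontinuous and shift-invariant, and $\Sigma$ witnesses the value $\alpha$. A standard recurrence argument --- passing to a suitable minimal subsystem, or, equivalently, concatenating ever-longer central blocks of $\Sigma$ (each delimited by repeated visits to one fixed, frequently recurring state of $\T^{(n)}$) separated by buffer blocks taken from $\Sigma$ so that no junction creates a step-quality above $\alpha$ --- produces a bi-infinite walk $\mathbf c$ all of whose step-qualities are $\le\alpha$ and whose step-qualities approach $\alpha$, in one fixed component $g^\ast$, at steps tending to $+\infty$. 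Every state occurring in $\mathbf c$ already occurs in some walk of $\T^{(n)}$ issued from the canonical start state $\bigl(\bigl[\begin{smallmatrix}d&0\\0&1\end{smallmatrix}\bigr]\bigr)_{d\mid n}$ (the $\xi_k$-walks start there and there are finitely many states), so some finite $LR$-word $v$ steers $\T^{(n)}$ from the start state to the state of $\mathbf c$ at step $0$. Taking $\xi$ to be the irrational with $LR$-expansion $v$ followed by the forward input letters of $\mathbf c$, the $LR$-expansion of $d\xi$ is a finite word followed by the forward $d$-output of $\mathbf c$; hence $\lambda(d\xi)\le\alpha$ for all $d\mid n$, with equality for $d=g^\ast$, so $\lambda_n(\xi)=\alpha$ and $\alpha\in\L_n$.

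The main obstacle is the recurrence step of the third paragraph: turning ``an extremal cut exists once'' into ``extremal cuts recur along a forward ray'' is exactly where the analogous classical proof is most delicate, and one must either run the minimal-subsystem argument carefully (the set ``$\Phi=\alpha$'' need not be closed) or construct the buffer blocks explicitly and check, via Lemma~\ref{lem:sensitive}, that the glued junctions never exceed $\alpha$ in any component. A secondary point is bookkeeping: the correspondence between cuts of $d\xi$ and transducer steps must be made uniform in $k$ (this is what Lemmas~\ref{lem:nonempty} and~\ref{lem:sensitive} secure), and the several passes to subsequences --- fixing $g$, fixing the recurring state, extracting $\mathbf c$ --- must be organized so that a single component $g^\ast$ survives all of them.
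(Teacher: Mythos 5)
Your setup (the product transducer $\prod_{d\mid n}\SRT_d$, Lemma \ref{lem:nonempty} to keep output positions in step with the walk, Lemma \ref{lem:sensitive} to make cut qualities window-continuous) matches the paper's, but the core of your argument has a genuine gap. After the diagonal extraction you retain only a single bi-infinite walk $\Sigma$ with all step-qualities $\le\alpha$ and the central one equal to $\alpha$. That data alone says, in effect, that $\alpha$ is an attained Markoff-type value, and it cannot imply $\alpha\in\L_n$: already for $n=1$ there are values $\alpha\in\M\setminus\L$ realized by exactly such bi-infinite sequences (sup attained at one central cut, both tails having strictly smaller limsup), so no recurrence argument applied to $\Sigma$ by itself can succeed. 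What you discarded in the extraction is precisely the information the paper uses: $\alpha_k=\lambda_n(\xi_k)$ is a \emph{limsup}, so near-extremal cuts recur infinitely often \emph{within each walk} $\W_k$, and by pigeonhole one can find two such cuts $C^{(1)},C^{(2)}$ on the same edge of the finite graph with identical neighborhoods of radius $k$. Keeping only one re-centered cut per $k$ loses this recurrence, and it is not recoverable from the limit object.

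The proposed repair mechanisms also do not work as stated. A minimal subsystem of the orbit closure of $\Sigma$ may entirely miss the states where the quality is near $\alpha$ (the extremal cut can be a transient central feature), and delimiting blocks by ``repeated visits to one fixed state'' does not control junction qualities: a cut quality depends on a window of letters on both sides of the junction, and gluing two blocks each with all qualities $\le\alpha$ can create a strictly larger quality (e.g.\ a tail $\ldots,2,2$ glued to a head $1,1,\ldots$ produces a cut of quality about $2+[0,\overline{2}]+[0,\overline{1}]\approx 3.03$, exceeding $\sqrt{8}$). The paper's proof avoids both problems by a two-stage argument: repeating the segment of $\W$ between $C^{(1)}$ and $C^{(2)}$ (whose matching length-$k$ neighborhoods bound all new junction qualities via Lemma \ref{lem:sensitive} up to an error $2^{3-k/(n\floor{\alpha})}$) yields \emph{periodic} walks, i.e.\ quadratic irrationals, with $\lambda_n$ within $\epsilon$ plus a vanishing error of $\alpha$; this proves $\L_n\subseteq\operatorname{Closure}(\P_n)$, and a second concatenation of the periodic approximants, glued where consecutive walks agree on ever-longer windows around a common cut edge, proves the reverse inclusion and hence closedness. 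If you want to salvage your one-stage plan, you must glue blocks taken from the $\W_k$ themselves at matching windows (not matching states), and accept that each junction only gives $\le\alpha+\epsilon_k$, which forces essentially the paper's limiting structure. (A minor further point: individual cut qualities of $d\xi_k$ are not bounded by the limsup $\lambda(d\xi_k)$; your bound $\le\alpha_k$ holds only for cuts beyond a threshold, so $j_k$ must be chosen late enough -- fixable, but it should be said.)
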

  The proof given below closely follows the proof of closedness of the classical Lagrange spectrum given by Cusick \cite{CusickConnection}, and in particular shows the following stronger result.
  \begin{prop}
    For each $n \geq 1$, $\L_n$ is the closure of the set $\P_n$ of $n$-approximabilities of quadratic irrationals.
  \end{prop}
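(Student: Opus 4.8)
Since a quadratic irrational is in particular a real number, $\P_n \subseteq \L_n$ is immediate, so once $\L_n$ is known to be closed (Proposition~\ref{prop:Ln_closed}) the proposition follows from the \emph{denseness} statement $\L_n \subseteq \overline{\P_n}$; in fact the plan is to establish the denseness and the closedness together by one argument, so that the proof of Proposition~\ref{prop:Ln_closed} costs nothing extra. So fix $\alpha \in \overline{\L_n}$, say $\alpha = \lim_m \lambda_n(\xi^{(m)})$; the case $\alpha = \infty$ is immediate since $\lambda_n([\overline m]) \ge \lambda([\overline m]) \to \infty$ and $\infty \in \L_n$ for any Liouville number, so assume $\alpha < \infty$ and, discarding early terms, $\lambda_n(\xi^{(m)}) < \alpha + 1$ for all $m$. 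By Proposition~\ref{prop:lambda_n->lambda}, $\lambda_n(\xi^{(m)}) = \max_{g \mid n} \lambda(g\xi^{(m)})$, so $\lambda(g\xi^{(m)}) < \alpha+1$ for every $g \mid n$, and hence by \eqref{eq:approx} the partial quotients of each $g\xi^{(m)}$ are $\le \alpha+1$ from some point on. The key device is to feed the $LR$-expansion of $\xi^{(m)}$ into the slow Raney transducer $\SRT_n$ (and, when $n$ is composite, into the $\SRT_g$ for all $g \mid n$ in parallel): this traces an infinite walk through a finite state set, reading the $LR$-expansion of $\xi^{(m)}$ one letter at a time while emitting those of all the $g\xi^{(m)}$; Lemma~\ref{lem:nonempty} guarantees the outputs are genuinely infinite and that advancing far along the walk means advancing far out in $\xi^{(m)}$ and in every output. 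Because the partial quotients are eventually bounded, the local configuration far along the walk---the current state together with the radius-$h$ window of every $LR$-stream, input and output---takes only finitely many values for each fixed $h$.

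\textbf{Extraction of a bi-infinite walk.} For each $m$ I would pick a divisor $g_m \mid n$ with $\lambda(g_m\xi^{(m)}) = \lambda_n(\xi^{(m)})$ and a cut of $g_m\xi^{(m)}$ of quality more than $\lambda_n(\xi^{(m)}) - 1/m$, and pull it back to a position on the walk. Passing to a subsequence we may take $g_m = g_0$ fixed; then a diagonal argument over the radius $h$, using the finiteness of configurations, produces a \emph{bi-infinite} walk $\mathbf w$ on the transducer, with bi-infinite input $LR$-sequence $\mathbf b$ and bi-infinite output $LR$-sequences $\mathbf c^{(g)}$ for $g \mid n$, such that (by Lemma~\ref{lem:sensitive}) the distinguished cut of $\mathbf c^{(g_0)}$ has quality exactly $\alpha$, while every cut of every $\mathbf c^{(g)}$ has quality $\le \alpha$ (because $\lambda(g\xi^{(m)}) \le \lambda_n(\xi^{(m)}) \to \alpha$ and, by Lemma~\ref{lem:sensitive}, any individual cut depends up to arbitrarily small error on only the bounded nearby digits). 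Being a pointwise limit of shifts of honest walks, $\mathbf w$ is \emph{recurrent}: each augmented local configuration occurring along it recurs infinitely often in both directions.

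\textbf{Closing up.} Fix a large radius $K$ and two occurrences $i < j$ of the radius-$K$ configuration at the distinguished position, chosen at distance more than $K$ from it on either side (possible by recurrence); let $w$ be the piece of $\mathbf w$ from $i$ to $j$. Since the configurations at $i$ and $j$ agree to radius $K$ in the state, the input, and every output, the sub-walk $w$ is a cycle in the transducer, so its periodisation is a purely periodic $LR$-sequence $\overline w$ defining a quadratic irrational $\theta_K$ whose outputs $\overline{w^{(g)}}$ (the $LR$-expansions of the $g\theta_K$) are likewise purely periodic; crucially every window of $\overline w$---including the one straddling the period seam---coincides with a true window of $\mathbf b$, and every window of $\overline{w^{(g)}}$ with a true window of $\mathbf c^{(g)}$, so that by \eqref{eq:periodic} and Lemma~\ref{lem:sensitive} one gets $\lambda_n(\theta_K) = \max_{g \mid n} \lambda(g\theta_K) = \alpha + O(2^{-K})$. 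Letting $K \to \infty$ gives $\alpha \in \overline{\P_n}$, the desired denseness; and splicing the blocks $w$ for $K = 1,2,3,\dots$ end to end along their (recurring, nested) radius-$K$ seams yields a one-sided $LR$-sequence that still agrees locally with $\mathbf b$ at every scale while the distinguished pattern recurs along it with windows of growing radius, hence a single real $\xi$ with $\lambda_n(\xi) = \alpha$---which is exactly the closedness of $\L_n$.

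\textbf{Where the difficulty lies.} The crux, and the step that follows Cusick's treatment most closely, is the passage from the bi-infinite walk $\mathbf w$ to an honest quadratic (or real): one must arrange that the extremal ``record'' configuration really does recur along $\mathbf w$, so that the $\limsup$ defining $\lambda$ is still attained after periodising or splicing, and \emph{simultaneously} that the period seam (or each splice) sits at a recurring configuration, so that merging runs there cannot manufacture an inflated partial quotient that would push some $\lambda(g\,\cdot\,)$ above $\alpha$; the recurrence of a pointwise limit of shifts, together with the finiteness of the transducer's state set and of bounded-run-length windows, is precisely what makes these two demands compatible. One small extra subtlety, absent in the classical case: one cannot simply delete the transient part of $\xi^{(m)}$, since that is a $\GL_2\ZZ$- but not a $\Gamma^0(n)$-operation and would alter $\lambda_n$; instead one keeps the transient and runs the transducer from its true initial node $\bigl[\begin{smallmatrix} g & 0 \\ 0 & 1 \end{smallmatrix}\bigr]$, prepending if needed a short word that steers the walk into the state where $\mathbf w$ begins.
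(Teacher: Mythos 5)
Your overall architecture is the paper's: run the slow transducers $\SRT_g$ for all $g \mid n$ in parallel, use the eventual bound on partial quotients plus the finiteness of states to get finitely many local configurations, close a segment of the walk into a cycle to produce quadratic approximants, and splice to get exactness. But the step you yourself call the crux rests on a false claim: a pointwise limit of shifts of walks is \emph{not} in general recurrent, and in particular the radius-$K$ configuration at the distinguished position need not occur anywhere else along $\mathbf{w}$, let alone at distance $>K$ on both sides. For instance, if the $\xi^{(m)}$ have continued fractions $[\overline{1^m\,2}]$, the near-extremal cuts sit at isolated $2$'s in ever longer runs of $1$'s, and the limit of shifts centered there is $\ldots,1,1,2,1,1,\ldots$, in which the window around the $2$ occurs exactly once; the two flanking occurrences $i<j$ that your ``Closing up'' step needs do not exist. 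The paper never forms a bi-infinite limit: in the $(\subseteq)$ direction it pigeonholes \emph{within a single one-sided walk}, using that the limsup defining $\lambda_n(\xi)$ supplies infinitely many near-extremal cuts, all far enough out that every later cut has quality $\le \alpha+\epsilon$, so two of them share the same cut edge and matching radius-$k$ neighborhoods; the cycle runs from one occurrence of the \emph{high-quality cut} to the next, so no recurrence of a central window is ever needed. In the $(\supseteq)$ direction the needed recurrence comes for free from the periodicity of the walks of quadratic irrationals, whose periods are spliced at a common extremal cut edge. Your argument can be repaired along exactly these lines (do the pigeonhole inside a single $\xi^{(m)}$ with $m$ large, for each $K$), but as written the recurrence step fails and everything after it leans on it.

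A second, smaller defect: in ``Closing up'' you take $\theta_K$ purely periodic with expansion $\overline{w}$ and assert that the periodized outputs are the expansions of the $g\theta_K$. They are the expansions of $\gamma_g(\theta_K)$, where $\gamma_g$ is the state of $\SRT_g$ at the cycle's basepoint, and these need not have the same $\lambda$-values as $g\theta_K$, since $\bigl[\begin{smallmatrix} g & 0 \\ 0 & 1 \end{smallmatrix}\bigr]\gamma_g^{-1}$ is generally not integral. Your closing remark (run each transducer from its true start node and prepend a steering word) is the right fix and is exactly what the paper does by retaining the transient part of the walk; but then your approximant must be presented as an eventually periodic number, not the purely periodic $\theta_K$, and the same correction is needed for the spliced sequence in your closedness argument. (Reachability of the basepoint configuration from the start node is automatic, since that configuration occurs along an honest walk.)
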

  \begin{proof}
    Let $G$ be a finite graph that computes, for given $\xi$, the $LR$-expansion of $d\xi$ for all divisors $d\mid n$ simultaneously. Such a graph can be constructed as follows. Take for the nodes $V(G)$ the cartesian product $\prod_{d|n} V(\SRT_d)$ of the nodes of the slow Raney transducers corresponding to the divisors of $n$. Give each node $(\gamma_d)_d$ two outgoing edges
    \[
    (\gamma_d)_d \xrightarrow{S : (W_d)_d} (\gamma'_d)_d
    \]
    whose targets and labels are derived from those of the edges
    \[
    \gamma_d \xrightarrow{S : W_d} \gamma'_d
    \]
    emanating from the respective node $\gamma_d$ in each graph $\S_d$.
    
    Now any infinite $LR$-sequence can be encoded as a walk on $G$ starting from the start node $\bigl(\bigl[\begin{smallmatrix}
      d & 0 \\ 0 & 1
    \end{smallmatrix}\bigr]\bigr)_{d\mid n}$. As we walk, we assemble the words $W_d$ for each divisor $d\mid n$ to get the $LR$-expansion of $d\xi$. We can then compute $\lambda_n(\xi)$ by taking the limsup of the approximation qualities of each cut of each $d\xi$ via Proposition \ref{prop:quality}.
    
    We now proceed to prove that
    \begin{equation} \label{eq:closure}
      \L_n = \operatorname{Closure}(\P_n).
    \end{equation}
    Observe that $\P_n$ is the subset of approximabilities derived from (eventually) \emph{periodic} walks on $G$. We prove the $(\subseteq)$ and $(\supseteq)$ directions separately.
    
    \paragraph{$(\subseteq)$} Let $\alpha = \lambda_n(\xi)$. Then $\alpha = \lambda(d_0\xi)$ for some $d_0 \mid n$ by Proposition \ref{prop:lambda_n->lambda}. View $\xi$ as an infinite walk
    \[
      \W \colon \quad v_0 \xrightarrow[e_0]{S_1:(W_{1,d})_d} 
      v_1 \xrightarrow[e_1]{S_2:(W_{2,d})_d} 
      v_2 \xrightarrow[e_2]{S_3:(W_{3,d})_d} \cdots.
    \]
    Inasmuch as an $LR$-sequence is another form of a continued fraction, we define a \emph{cut} of an $LR$-sequence to be a choice of a (maximal) consecutive block of $L$'s or $R$'s, and the \emph{quality} of the cut to be the quality of the corresponding cut of a continued fraction, as explained in Section \ref{sec:classical}.

    There is a sequence of cuts $C_1,C_2,\ldots$ of the $LR$-expansion of $d_0\xi$ whose qualities converge to $\alpha$. A cut $C$ arises from a block of consecutive $L$'s or $R$'s on the output words $W_{i,d}$ in $G$ giving the $LR$-expansion of $d\xi$ for some $d \mid n$. Define the \emph{cut edge} to be the edge $e_{C}$ of $G$ on which the first $L$ or $R$ of this block is written. Since $G$ is a finite graph, there is for each $k \geq 1$ and $\epsilon > 0$ a pair of cuts $C^{(1)}, C^{(2)}$ such that:
    \begin{itemize}
      \item $C^{(1)}$ has quality $\geq \alpha - \epsilon$.
      \item All cuts of all $d\xi$, $d\mid n$ whose cut edge are at or after the cut edge $e_i$ of $C^{(1)}$ along $\W$, have quality $\leq \alpha + \epsilon$.
      \item $C^{(2)}$ has the same cut edge $e_j = e_i$ as $C^{(1)}$ and corresponds to the same block of $L$'s or $R$'s on the label $W_{i,d}$ of this edge, but occurring later on in $\W$: $j > i$.
      \item Moreover, the $k$ steps before and after the cut edge agree for $C^{(1)}$ and $C^{(2)}$: $e_{i + h} = e_{j + h}$, $-k \leq h \leq k$.
    \end{itemize}
    Also, since $\alpha = \lambda_n(\xi)$ is finite, the terms of the continued fractions of all $d\xi$ are eventually bounded by $\floor{\alpha}$, and we can assume that all the terms from $e_{i-k}$ onward obey this bound.

    Let $\xi'$ be the irrational number corresponding to the following walk $\W'$ on $G$: Begin as for $\W$ until reaching the cut edge $e_j$, and then repeat the portion of $\W$ following $e_i$ up to $e_j$ infinitely. Note that $\W'$ is a periodic walk, so $\xi'$ is a quadratic irrational. By construction, for any cut $C'$ of $\W'$, there is a cut $C$ of $\W$ such that $C'$ and $C$ agree for at least $k$ edges before and after the cut edge. By Lemma \ref{lem:nonempty}, the corresponding $LR$-sequences agree before and after the first letter of the cut for at least $k/n$ letters, yielding at least $k/(n\floor{\alpha}) - 1$ common terms of the continued fraction on each side of the cut term. By Lemma \ref{lem:sensitive}, we have
    \[
      \lambda(C') \leq \lambda(C) + 2^{3-\frac{k}{n\floor{\alpha}}} \leq \alpha + \epsilon + 2^{3-\frac{k}{n\floor{\alpha}}}.
    \]
    Also, the recurring appearance of $e_i$ yields an infinite sequences of cuts $C'$ for which 
    \[
      \lambda(C') \geq \lambda(C^{(1)}) - 2^{3-\frac{k}{n\floor{\alpha}}} \geq \alpha - \epsilon - 2^{3-\frac{k}{n\floor{\alpha}}}.
    \]
    Consequently,
    \begin{equation} \label{eq:close}
      \size{\lambda_n(\xi') - \alpha} \leq \epsilon + 2^{3-\frac{k}{n\floor{\alpha}}}.
    \end{equation}
    Taking $k \to \infty$ and $\epsilon \to 0$, the right-hand side of \eqref{eq:close} goes to $0$, so $\alpha \in \operatorname{Closure}(\P_n)$ as desired.
    
    \paragraph{$(\supseteq)$} Let $\xi_1, \xi_2, \ldots$ be a sequence of quadratic irrationals whose $n$-approximabilities $\alpha_i$ converge to $\alpha \in \operatorname{Closure}(\P_n)$. For each $i$, $\alpha_i$ is the quality of a cut $C_i$ in the periodic part of $d_i\xi_i$, according to \eqref{eq:periodic}. Also, $\xi_i$ corresponds to an eventually periodic walk $\W_i$ on $G$, and $C_i$ has a particular cut edge $e_i$ in the periodic part of $\W_i$. Passing to a subsequence, we may assume that:
    \begin{itemize}
      \item All $d_i = d$ are equal.
      \item All $e_i = e$ are the same edge, and the cuts $C_i$ are at the same block of $L$'s or $R$'s starting on this edge.
      \item The walks $\W_k$ and $\W_{k+1}$ agree for $k$ steps before and after $e$.
    \end{itemize}
    Now construct a walk $\W$ as follows. Follow $\W_1$ until the cut edge $e$, then walk one period of $\W_2$ starting and ending at $e$, then one period of $\W_3$ starting and ending at $e$, and so on. This $\W$ corresponds to an irrational number $\xi$. Similar to the preceding part, we check that $\lambda_n(\xi) = \alpha$, establishing that $\alpha \in \L_n$.
  \end{proof}
  
  
  \section{An algorithm to compute \texorpdfstring{$\min \L_p$}{min ℒₚ}}
  \label{sec:alg}
  
  Raney transducers are ideally suited for computing elements of the $n$-Lagrange and $n$-Markoff spectra, especially if $n = p$ is prime, to which case we now specialize. 
  

  Recall that an $LR$-sequence can be viewed as a continued fraction, where each term of the continued fraction corresponds to a run of consecutive $L$'s or $R$'s in the sequence. Given a finite $LR$-sequence
  \[
  W = L^{a_0} R^{a_1} L^{a_2} \cdots (L\text{ or }R)^{a_k} \textor R^{a_0} L^{a_1} R^{a_2} \cdots (L\text{ or }R)^{a_k},
  \]
  let $\underline\lambda(W)$ be the minimum possible approximability of a completion of $W$, as evidenced by cuts in $W$: namely
  \[
  \underline\lambda(W) = \max_{1 \leq i \leq k} \bigl(a_i + [0, a_{i-1}, a_{i-2}, \ldots, a_{i \mathbin{\%} 2}] + [0, a_{i+1}, a_{i+2}, \ldots, a_{k - ((k - i) \mathbin{\%} 2)}] \bigr),
  \]
  where $i \mathbin{\%} 2$ denotes the least nonnegative residue of $i$ modulo $2$ (either $0$ or $1$).
  
  Let $\beta > 0$. We say that an $LR$-sequence $S$ is \emph{$\beta$-good} if $\underline\lambda(S) \leq \beta$, and \emph{$\beta$-bad} otherwise. We say that a path $P$ of a (slow or fast) Raney transducer is \emph{$\beta$-good} if the input and output $LR$-sequences $I_P, O_P$ formed by walking this path are both $\beta$-good; and \emph{$\beta$-bad} otherwise.
  
  We can now state our algorithm for computing $\min \L_n$, currently implemented only for primes $n = p$.
  
  \begin{alg}~\label{alg:main}
    \begin{itemize}
      \item[] \textbf{Input:}
      \begin{itemize}
        \item A prime $p$
      \end{itemize}
      \item[] \textbf{Output, if the algorithm terminates:}
      \begin{itemize}
        \item The minimal value $\alpha = \min \L_p$
        \item A bound $\beta$ such that $\L_p \intsec [0, \beta) = \{\alpha\}$
        \item A listing of representatives of all $\Gamma^0(p)$-classes of irrationals $\xi$ achieving $\lambda_p(\xi) = \alpha$
      \end{itemize}
      \item[] \textbf{Running variables:}
      \begin{itemize}
        \item $k$, the lengths of paths to be considered (initially $1$)
        \item $\alpha$, the lowest known value in $\L_p$ (initially $\infty$)
        \item $\beta$, the lowest $\underline\lambda$-value found for $\alpha$-bad paths (initially $\infty$)
        \item A list of all good paths of length $k - 1$ (initially, all paths of length $0$ are good)
      \end{itemize}
      \item[] \textbf{Iteration step, for $k = 1,2,\ldots\colon$}
      \begin{enumerate}
        \item List all paths of length $k$ on the Raney transducer $\T_p$ whose subpaths of length $k-1$ are both $\alpha$-good.
        \item For each such path $P$, check whether $P$ is $\alpha$-good, that is, its input and output words $V,W$ satisfy $\underline\lambda(P) = \max\{\underline\lambda(V), \underline\lambda(W) \leq \alpha$.
        \item If $P$ is $\alpha$-good, add $P$ to the list of good paths. \item Additionally, if $P$ is an $\alpha$-good cycle, update $\alpha \leftarrow \min\{\alpha, \lambda(P^\infty)\}$, where $\lambda(P^\infty) = \max\{\lambda(V^\infty), \lambda(W^\infty)\}$ denotes the approximability of $P$ extended periodically as in \eqref{eq:periodic}.
        \item If $P$ is $\alpha$-bad, let $\beta \leftarrow \min\{\beta, \underline\lambda(P)\}$.
      \end{enumerate}
      \item[] \textbf{Stopping condition:}
      We stop if, for some positive integers $m$ and $t$, the good paths have the following property: every path of length $m$ occurring as the middle segment of a good path of length $m + 2t$ has a unique forward extension to a path of length $m+1$ occurring as the middle segment of a good path of length $m + 2t + 1$. If this happens, we deduce that any infinite good path repeats (after at most $t$ transient initial moves) around one of finitely many cycles $P_1,\ldots, P_r$. We compute their approximabilities $\alpha_i = \lambda(P_i^\infty)$ and arrange them so that $\alpha_1\leq \cdots \leq \alpha_r$. Let $\alpha_2'$ be the smallest $\alpha_i$ that exceeds the minimum $\alpha_1$ ($\alpha_2' = \infty$ if no such $\alpha_i$ is found).
      \item[] \textbf{Return values:} Once the stopping condition is achieved, we return
        \begin{itemize}
          \item $\alpha = \alpha_1$: the best $\lambda_p$-value of a cycle found
          \item $\beta \leftarrow \min\{\beta, \alpha_2'\}$
          \item an irrational $\xi$ corresponding to each optimal cycle, which may be retrieved by picking a path from the start node $\bigl[\begin{smallmatrix}
            p & 0 \\ 0 & 1
          \end{smallmatrix}\bigr]$ to the cycle and converting the resulting eventually periodic $LR$-expansion to an irrational number $\xi$. (This is possible because the Raney transducer is strongly connected, as Raney proves \cite[8.3]{Raney_Automata}.)
        \end{itemize}
    \end{itemize}
  \end{alg}
  \begin{rem}
    The choice of $t$ and hence $m$ in the stopping condition is unimportant; it is easy to see that if the stopping condition holds for some $m$ and $t$, it holds for all sufficiently large $m$ and $t$. In our implementation we take $t \approx \sqrt{k}$, $m = k - 2t - 1$ which seems to be good enough.
  \end{rem}
  \begin{thm} \label{thm:alg_OK}
    If this algorithm terminates, it shows that 
    \begin{enumerate}[(a)]
      \item $\alpha = \min \L_p$ is the square root of a rational number, and
      \item $\L_p \intsec [-\infty, \beta) = \{\alpha\}$; in particular, $\alpha$ is an isolated point of $\L_n$ and $\M_n$.
    \end{enumerate}
  \end{thm}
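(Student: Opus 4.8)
The plan is to extract the two claims directly from the structure of the stopping condition. First I would establish the combinatorial heart of the matter: if the algorithm halts at stage $k$, then the stopping condition (every middle-segment path of length $m$ inside a good path of length $m+2t$ has a unique forward extension to a good middle segment of length $m+1$) forces the set of all infinite $\alpha$-good paths on $\T_p$ to be a finite union of ``lassos'': a transient prefix of length $\le t$ followed by one of finitely many cycles $P_1,\dots,P_r$. The argument is a standard pumping/compactness observation — the uniqueness of forward extensions means the good infinite paths form a graph of out-degree $1$ on middle segments, so every infinite good path is eventually periodic, and there are only finitely many periods since $\T_p$ is finite. Crucially, one must check that the stopping test, which only inspects paths that are \emph{middle segments} of good paths of bounded length, really certifies that \emph{arbitrarily long} good paths continue to behave this way; this follows because $\underline\lambda$ of a longer path is at least $\underline\lambda$ of any of its subpaths, so $\alpha$-goodness is inherited by subpaths and hence any long good path is built by concatenating forced one-step extensions.

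Next I would connect good paths to the spectrum. Given any $\xi$ with $\lambda_p(\xi) = \max_{g \mid p}\lambda(g\xi)$ (Proposition~\ref{prop:lambda_n->lambda}, here just $g \in \{1,p\}$), view $\xi$ as an infinite walk on $\T_p$ whose input word is the $LR$-expansion of $\xi$ and whose output word is that of $p\xi$. If $\lambda_p(\xi) < \beta$, then in particular $\lambda_p(\xi) \le \alpha$ (since $\beta \le \alpha_2'$ and, by construction, the only $\lambda$-values of good cycles below $\alpha_2'$ is $\alpha_1=\alpha$ itself), so \emph{every} cut of both $\xi$ and $p\xi$ has quality $\le \alpha$; equivalently every finite subpath of the walk is $\alpha$-good, i.e.\ the walk is an infinite $\alpha$-good path. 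By the first step, that walk is eventually periodic around some $P_i$, so $\lambda_p(\xi) = \lambda(P_i^\infty) = \alpha_i \ge \alpha_1 = \alpha$; combined with $\lambda_p(\xi)<\beta\le\alpha_2'$ this pins $\lambda_p(\xi)=\alpha$. Hence $\L_p \cap [-\infty,\beta) \subseteq \{\alpha\}$. For the reverse inclusion I would exhibit the $\xi$ produced in the return step: picking a path from the hub $\bigl[\begin{smallmatrix} p & 0 \\ 0 & 1\end{smallmatrix}\bigr]$ into an optimal cycle $P_1$ (possible by strong connectedness of $\T_p$, cited from Raney) gives an eventually periodic $LR$-expansion, hence a quadratic irrational $\xi$ with $\lambda_p(\xi)=\alpha$ by the periodic-cut formula \eqref{eq:periodic}; thus $\alpha\in\L_p$. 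Combining, $\L_p\cap[-\infty,\beta)=\{\alpha\}$ and $\alpha=\min\L_p$, which also equals $\min\M_p$ by Proposition~\ref{prop:min}.

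Part (a), that $\alpha$ is the square root of a rational, then follows from the discussion at the end of Section~\ref{sec:quad_approx}: $\alpha = \lambda(P_1^\infty)$ is the $\lambda$-value of a purely periodic continued fraction, and by the formula for the conjugate of a purely periodic continued fraction this $\lambda$-value is realized as $\eta - \bar\eta$ for a quadratic irrational $\eta$, hence is $\sqrt{\Disc}$ for a rational discriminant, and it is irrational because $\alpha \ge \sqrt 5 > 2$ while being attained as a cut value rules out rationality (an approximation of quality $\ge 2$ forces convergents, and a periodic continued fraction that is not eventually constant has irrational cut values). Finally, isolation of $\alpha$ in $\L_p$ and in $\M_p$ is immediate from $\L_p\cap[-\infty,\beta)=\{\alpha\}$ together with $\min\L_p=\min\M_p$ and $\L_p\subseteq\M_p$ (Propositions~\ref{prop:min} and~\ref{prop:LnMn}): no point of $\M_p$ lies in $(\alpha,\beta)$ either, since such a point would by Proposition~\ref{prop:attained} and the reduction of Proposition~\ref{prop:min}-type arguments force an element of $\L_p$ in the same range.

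The main obstacle I expect is the first paragraph's claim that the \emph{bounded} stopping test certifies \emph{unbounded} good-path rigidity — i.e.\ ruling out that some very long good path branches in a way invisible at length $m+2t$. The clean way to handle this is to phrase it as: the set of length-$m$ middle segments appearing in good paths is finite and closed under the ``forced successor'' map, so it is a union of cycles in the de Bruijn-type graph on these segments, and any infinite good path projects to an infinite walk in this finite graph of out-degree $1$, hence is eventually periodic with period bounded by the number of such segments — no appeal to arbitrarily long good paths beyond length $m+2t+1$ is actually needed, because goodness is inherited by subpaths. Making that inheritance and the ``every long good path is a concatenation of forced steps'' claim precise is the real content; everything else is assembling earlier propositions.
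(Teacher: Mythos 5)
The decisive step in your second paragraph does not hold up. You pass from ``$\lambda_p(\xi) < \beta$'' to ``every cut of both $\xi$ and $p\xi$ has quality $\leq \alpha$, i.e.\ the whole walk is an infinite $\alpha$-good path.'' This is doubly flawed. First, the parenthetical justification is circular: you infer $\lambda_p(\xi)\le\alpha$ from the fact that the only good-cycle value below $\alpha_2'$ is $\alpha$, but at that point you do not yet know that $\lambda_p(\xi)$ is the value of a good cycle --- that is exactly what the walk-is-eventually-periodic argument is supposed to deliver. Second, even granting $\lambda_p(\xi)\le\alpha$, the conclusion about cuts is false: $\lambda_p$ is a $\limsup$, so the walk of $\xi$ may contain finitely many (or even infinitely many, with qualities decreasing toward the limsup) windows whose $\underline\lambda$ exceeds $\alpha$; such a walk is not an $\alpha$-good path in your sense, and your ``finite union of lassos'' classification simply does not apply to it. Concretely, a $\xi$ whose continued fraction has a wild transient prefix and a tame tail has $\lambda_p(\xi)$ possibly below $\beta$ while plenty of its finite subpaths are $\alpha$-bad, so your argument says nothing about it.

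What is missing is precisely the quantity the algorithm carries along for this purpose: $\beta'$, the minimum of $\underline\lambda$ over all $\alpha$-bad paths encountered (all of which have length at most $k\ge m+2t$). The paper's proof runs the dichotomy on the walk $W$ of an \emph{arbitrary} $\xi$: either infinitely many length-$m$ windows of $W$ fail to be middle segments of good paths of length $m+2t$ --- then each such failure exhibits a bad path of bounded length, hence one already enumerated, hence with $\underline\lambda\ge\beta'$, and since $\underline\lambda$ gives lower bounds on actual cut qualities this forces $\lambda_p(\xi)\ge\beta'\ge\beta$; or only finitely many windows fail, in which case the tail of $W$ is governed by the stopping condition's forced-successor map and $W$ is eventually periodic around one of $P_1,\dots,P_r$, so $\lambda_p(\xi)\in\{\alpha_1,\dots,\alpha_r\}$ and, if below $\beta\le\alpha_2'$, equals $\alpha_1$. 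Your first paragraph (goodness inherited by subpaths, out-degree-one graph on middle segments, eventual periodicity of good walks) is fine and matches the intended use of the stopping condition, and your treatment of (a) via the periodic-cut formula is correct; but without the $\beta'$ bridge handling walks that are not everywhere good, the inclusion $\L_p\intsec[-\infty,\beta)\subseteq\{\alpha\}$ is not established. (Your closing remark on isolation in $\M_p$ via Proposition \ref{prop:attained} is also only a sketch --- that proposition yields an $\L_p$ point only when the supremum in \eqref{eq:mu_n} is \emph{not} attained --- but this is secondary to the main gap.)
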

  \begin{proof}
    If the stopping condition is satisfied for some $t$ and $m$, then already a cycle $P$ has been found of a finite approximability $\alpha = \lambda(P^\infty)$. Its approximability $\lambda_n(P^\infty)$ is always the square root of a rational number for the reasons pointed out in Section \ref{sec:quad_approx}. Let $\beta'$ be the value of $\beta$ just before the final step, where we return $\beta = \min\{\beta', \alpha_2'\}$. Let $\gamma = \lambda_n(\xi) \in \L_n$ be a value. We may assume that $\xi$ is positive and thus corresponds to an infinite walk $W$ on $\T_p$. There are two cases:
    \begin{enumerate}
      \item There are infinitely many paths of length $m$ in the walk $W$ that are \emph{not} the middle segment of a good path of length $m + 2t$. Each such path corresponds to an approximation to $\xi$ whose $n$-quality is at least $\beta$, implying $\gamma \geq \beta'$.
      \item After a certain point, every path of length $m$ in the walk $W$ is the middle segment of a good path of length $m + 2t$. By the stopping condition, $W$ is eventually periodic, with period being one of the cycles enumerated at the last step. So $\gamma$ is one of $\alpha_1,\ldots, \alpha_t$.
    \end{enumerate}
    Hence $\L_n \intsec [0,\beta') = \{\alpha_1,\ldots, \alpha_t\} \intsec [0,\beta')$. In particular, the lowest point $\alpha_1$ of $\L_n$ is isolated, and the next lowest point is at least $\min\{\alpha_2', \beta'\} = \beta$, as desired.

    For each good cycle $\alpha$, the corresponding irrational $\xi$ is unique up to changing finitely many initial steps, which corresponds to a transformation in $\Gamma^0(p)$ for the following reason. If we have two eventually identical paths
    \[
      \begin{tikzpicture}[->,>=latex]
        \node (g0) at (0,0) {$\gamma_0$};
        \node (g) at (2,0) {$\gamma$};
        \node (dots) at (4,0) {$\cdots$};
        \path (g0) edge [bend left] node[above] {$V_1 : W_1$} (g);
        \path (g0) edge [bend right] node[below] {$V_2 : W_2$} (g);
        \path (g) edge node[above] {$V:W$} (dots);
      \end{tikzpicture}
    \]
    then the corresponding real numbers $\xi_1, \xi_2$ with LR-expansions $V_1V, V_2V$ differ by the LFT
    \[
      \xi_2 = V_2 V_1^{-1} \xi_1,
    \]
    where, by the defining relation of the Raney transducer,
    \begin{align*}
      V_2 V_1^{-1} &= (\gamma_0^{-1} W_2 \gamma) (\gamma_0^{-1} W_1 \gamma)^{-1} \\
      &= \gamma_0^{-1} W_2 W_1^{-1} \gamma_0 \\
      &\in \SL_2 \ZZ \intsec \gamma_0^{-1} \cdot \SL_2 \ZZ \cdot \gamma_0 \\
      &= \Gamma^0(p). \qedhere
    \end{align*}
  \end{proof}
  \begin{rem}
    Together with the computations in the attached code, this shows Theorems \ref{thm:main} and \ref{thm:67}. It would be attractive to prove that, conversely, if the lowest point is isolated in one or both spectra then the algorithm terminates; but this does not seem easy.
  \end{rem}
  
  \section{Markoff triples and low-lying points in \texorpdfstring{$\L_p$}{ℒₚ}}
  \label{sec:Markoff}
  Before presenting our data on the lowest point in $\L_p$, we must explain certain points in $\L$ that recur in many $\L_p$.
  
  As is classically known (see \cite[Chapter 2, Theorems II and III]{CasselsDio}), the points $\alpha \in \L$ (equivalently $\alpha \in \M$) in the discrete portion below $3$ are parametrized by \emph{Markoff triples,} positive integer solutions $(x,y,z)$ to the \emph{Markoff equation} $x^2 + y^2 + z^2 = 3xyz$. For each Markoff triple with $x \leq y \leq z$, there is an attached irrational number $\xi$ with approximability $\lambda_z = \sqrt{9 - 4/z^2}$. The famous longstanding \emph{Markoff uniqueness conjecture} states that any positive integer $z$ occurs at most once as the largest element of a Markoff triple; it implies that the numbers $\xi$ with approximability values $\lambda_z < 3$ are uniquely determined by their approximability $\lambda_z$ up to $\GL_2 \ZZ$-transformation. We would like to propose a strengthening of this conjecture:
  \begin{conj}[Strong Markoff Uniqueness Conjecture]
    Each of the badly approximable irrationals $\xi$ corresponding to a Markoff triple $(x,y,z)$ has a different field of definition $\QQ\bigl[\sqrt{9 z^2 - 4}\bigr]$. In other words, if $(x,y,z)$ and $(x',y',z')$ are distinct Markoff triples, then
    \[
    \frac{9 z^2 - 4}{9 z'^2 - 4}
    \]
    is not a square in $\QQ$.
  \end{conj}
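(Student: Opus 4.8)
The plan is to recast the conjecture as an injectivity statement and then attack it along two complementary routes, since an unconditional proof is not to be expected. Writing $D(z)$ for the squarefree kernel of $9z^2 - 4 = (3z-2)(3z+2)$, and noting that an odd prime $\ell$ ramifies in $\QQ\bigl[\sqrt{9z^2-4}\bigr]$ precisely when $\ell \mid D(z)$, the conjecture is equivalent to the injectivity of $D$ on the set of Markoff numbers, equivalently to the claim that the set of primes dividing $(3z-2)(3z+2)$ to odd multiplicity determines $z$. One should observe at the outset that this \emph{implies} the classical Markoff uniqueness conjecture --- if two distinct Markoff triples shared a largest element $z=z'$ then $D(z)/D(z')=1$ is a square --- so a full proof would be a major advance; the realistic targets are (i) unconditional uniqueness within structured subfamilies, (ii) a conditional implication from classical uniqueness plus standard arithmetic input, and (iii) numerical verification for all Markoff $z$ below a large bound.

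The engine common to (i) and (ii) is a Pell reduction. If $D(z)=D(z')=d$ with $z\neq z'$ both Markoff, then $(3z,v)$ and $(3z',v')$ solve the Pell equation $X^2 - dY^2 = 4$; all of its solutions are obtained from finitely many base solutions by multiplying by powers of the fundamental unit of $\QQ(\sqrt d)$, so $9z^2-4$ and $9z'^2-4$ lie on finitely many ``orbits'' within each of which successive members grow geometrically in the orbit index. The conjecture thus reduces to the assertion that \emph{each such orbit contains at most one Markoff number}, which is precisely the kind of statement underpinning classical uniqueness and is the central obstacle.

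For (i), I would use that an odd $z$ has $\gcd(3z-2,3z+2)=1$, so $D(z)$ is the product of the squarefree kernels of $3z-2$ and of $3z+2$; when $3z\pm 2$ is a prime $p$ (a case in which classical uniqueness is already known) then $p$ divides $9z^2-4$ exactly once, hence $p\mid d$, so $D(z)=D(z')$ forces $3z'\equiv\pm 2\pmod p$, confining $z'$ to two residue classes mod $p$, after which one hopes to eliminate all but $z'=z$ by combining growth estimates for Markoff numbers along paths of the Markoff tree with the spacing of solutions in a single Pell orbit. For (ii), granting classical uniqueness one must still exclude the genuinely new coincidences in which $z\neq z'$ lie on a common Pell orbit, and I expect this to require either an effective linear-forms-in-logarithms bound contradicting the spacing of Markoff numbers, or a congruence obstruction showing the orbits attached to $z$ and to $z'$ are incompatible modulo a well-chosen auxiliary prime. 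Every route ends at the task of isolating a single Markoff number inside an infinite Pell family --- the same difficulty that has kept the classical conjecture open --- so the honest expectation is a partial theorem together with wide-range numerics, a full proof being at least as hard as classical Markoff uniqueness.
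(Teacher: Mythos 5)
There is no proof of this statement in the paper for your proposal to be measured against: the Strong Markoff Uniqueness Conjecture is stated precisely as a conjecture, the authors noting only that it strengthens the classical Markoff uniqueness conjecture and that they have verified it numerically for all $z, z' \leq 10^{30}$. Your reformulations are correct and consistent with that framing --- the conjecture is indeed equivalent to injectivity of the squarefree kernel of $9z^2-4=(3z-2)(3z+2)$ on Markoff numbers, it does imply classical uniqueness by taking $z=z'$, and a coincidence $D(z)=D(z')=d$ does place $(3z,v)$ and $(3z',v')$ among the solutions of $X^2-dY^2=4$. Your honest assessment of the difficulty also matches the paper's implicit stance.

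The substantive weakness is that your ``Pell reduction'' is not actually a reduction. For $X^2-dY^2=4$ the positive solutions form a single sequence generated by the fundamental solution (your ``finitely many orbits'' collapses to essentially one), so the assertion ``each orbit contains at most one Markoff number'' is just the conjecture restated, not a smaller target; and even if several orbits occurred, you would still have to exclude distinct Markoff numbers on \emph{different} orbits of the same $d$, which your formulation silently drops. Likewise, in route (i) the congruence $3z'\equiv\pm2\pmod p$ confines $z'$ to residue classes but gives no handle on the infinitely many Markoff numbers in those classes, and the appeal to linear forms in logarithms in route (ii) is gestured at rather than set up (what inequality between which quantities would be contradicted is never specified). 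So nothing in the proposal advances beyond the paper's own position --- conjecture plus numerics --- and it cannot be read as a proof; to match the paper's verifiable content you would at minimum need to carry out your item (iii), the large-scale computation over the Markoff tree, which the authors did to $10^{30}$.
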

  To our knowledge, this conjecture has not appeared before in the literature. It is easily checked numerically, and we have computed that it holds for $z, z' \leq 10^{30}$. This conjecture implies a handy characterization for when one of the Markoff points $\lambda_z \in \L$ appears in one of the $p$-spectra $\L_p$.
  \begin{prop}
    Let $(x,y,z)$ be a Markoff triple; let $\lambda_z = \sqrt{9 - 4/z^2} \in \L$ be the corresponding approximability, and let
    \[
    D = \begin{cases*}
      9 z^2 - 4, & $z$ odd \\
      \dfrac{9 z^2 - 4}{4}, & $z$ even
    \end{cases*}
    \]
    be the discriminant of the associated quadratic form. For a prime $p$, we have:
    \begin{enumerate}[$($a$)$]
      \item If $p$ is the product of two principal primes in the quadratic order $\OO_D$ of discriminant $D$, then $\lambda_p \in \L_p$.
      \item Conversely, if $\lambda_p \in \L_p$ and the Strong Markoff Uniqueness Conjecture holds for $z' \leq z$, then $p$ in the product of two principal primes in $\OO_D$.
    \end{enumerate}
  \end{prop}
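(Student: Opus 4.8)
The plan is to recognize the membership $\lambda_z \in \L_p$ as a condition on the $2\times 2$ integer matrices fixing the Markoff quadratic irrational attached to $(x,y,z)$, and then to translate that condition into the splitting behaviour of $p$ in the order $\OO_D$. Fix a purely periodic Markoff irrational $\xi_z$ with $\lambda(\xi_z)=\lambda_z$; it is classical that the period of a Markoff irrational is a cyclic rotation of its reverse, so $\xi_z$ is $\GL_2\ZZ$-equivalent to its conjugate $\bar\xi_z$. Let $f_z = a_0x^2+b_0xy+c_0y^2$ be the associated primitive binary quadratic form, so $\gcd(a_0,b_0,c_0)=1$ and $b_0^2-4a_0c_0 = D$ by definition of $D$; since dividing a discriminant by $4$ does not change the field, $\QQ[\xi_z]=\QQ[\sqrt D]=\QQ[\sqrt{9z^2-4}]$. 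By Proposition~\ref{prop:lambda_n->lambda}, $\lambda_p(\xi)=\max\{\lambda(\xi),\lambda(p\xi)\}$, so $\lambda_z\in\L_p$ is equivalent to the existence of an irrational $\xi$ with $\lambda(\xi)\le\lambda_z$, $\lambda(p\xi)\le\lambda_z$, and equality for at least one of the two.

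The crux is the following dictionary. An integer matrix $M=\bigl[\begin{smallmatrix}a&b\\c&d\end{smallmatrix}\bigr]$ fixes $\xi_z$ as a linear fractional transformation iff $c\xi_z^2+(d-a)\xi_z-b=0$, i.e.\ iff $(c,d-a,-b)=r(a_0,b_0,c_0)$ for some $r\in\QQ$; primitivity of $f_z$ forces $r\in\ZZ$, and such an $M$ automatically fixes $\bar\xi_z$. Hence the ring of integer matrices fixing $\xi_z$ is $\ZZ[M_0]$ with $M_0=\bigl[\begin{smallmatrix}0&-c_0\\a_0&b_0\end{smallmatrix}\bigr]$, whose characteristic polynomial $t^2-b_0t+a_0c_0$ has discriminant $D$; so this ring is isomorphic to $\OO_D$, with $\det$ corresponding to the norm $N=N_{\QQ[\sqrt D]/\QQ}$. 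Since $p$ is prime, any integer matrix of determinant $\pm p$ has Smith normal form $\operatorname{diag}(1,p)$. Combining these facts: there is an integer matrix of determinant $\pm p$ fixing $\xi_z$ $\iff$ there is $\mu\in\OO_D$ with $N(\mu)=\pm p$ $\iff$ $p$ is the product of two principal primes of $\OO_D$ (an element of norm $\pm p$ generates such a prime, a generator of a principal prime above $p$ has norm $\pm p$, and when $p$ is inert neither side holds).

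For (a), start from $\mu\in\OO_D$ with $N(\mu)=\pm p$, take the corresponding integer matrix $M$ fixing $\xi_z$, and use its Smith normal form to write $M=h^{-1}\bigl[\begin{smallmatrix}p&0\\0&1\end{smallmatrix}\bigr]g$ with $g,h\in\GL_2\ZZ$. Putting $\xi=g(\xi_z)$, the relation $M(\xi_z)=\xi_z$ together with $\bigl[\begin{smallmatrix}p&0\\0&1\end{smallmatrix}\bigr]\colon x\mapsto px$ gives $p\xi=h(\xi_z)$, so $\lambda(\xi)=\lambda(p\xi)=\lambda(\xi_z)=\lambda_z$ by $\GL_2\ZZ$-invariance, whence $\lambda_p(\xi)=\lambda_z$ and $\lambda_z\in\L_p$. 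For (b), suppose $\lambda_p(\xi)=\lambda_z$. Then $\lambda(\xi),\lambda(p\xi)$ lie in $[\sqrt5,\lambda_z]\subseteq[\sqrt5,3)$, a range in which $\L$ consists only of the discrete points $\lambda_m$ ($m$ a Markoff number) by Markoff's theorem \cite[Chapter~2]{CasselsDio}; so $\lambda(\xi)=\lambda_{z_1}$ and $\lambda(p\xi)=\lambda_{z_2}$ with $z_1,z_2\le z$, each realized up to $\GL_2\ZZ$ by a Markoff irrational, and $\xi$, $p\xi$ generate the same quadratic field. That field is $\QQ[\sqrt{9z_1^2-4}]=\QQ[\sqrt{9z_2^2-4}]$, so $(9z_1^2-4)/(9z_2^2-4)$ is a rational square; by the Strong Markoff Uniqueness Conjecture applied for $z'\le z$ the two triples coincide — in particular $z_1=z_2$, and with $\max\{\lambda_{z_1},\lambda_{z_2}\}=\lambda_z$ this forces $z_1=z_2=z$ and gives ordinary uniqueness for these $z'$. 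Therefore $\xi$ and $p\xi$ are each $\GL_2\ZZ$-equivalent to $\xi_z$ (absorbing the ambiguity with $\bar\xi_z$ via $\xi_z\sim_{\GL_2\ZZ}\bar\xi_z$); writing $\xi=g(\xi_z)$, $p\xi=h(\xi_z)$ with $g,h\in\GL_2\ZZ$, the matrix $M:=h^{-1}\bigl[\begin{smallmatrix}p&0\\0&1\end{smallmatrix}\bigr]g$ is an integer matrix of determinant $\pm p$ fixing $\xi_z$, so by the dictionary $p$ is the product of two principal primes of $\OO_D$.

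The part I expect to be the main obstacle is making the dictionary of the second paragraph airtight: verifying that the ring of integer matrices fixing $\xi_z$ is exactly the order $\OO_D$ of the stated discriminant (the computation above, but the bookkeeping with $a_0,b_0,c_0$, and the case of even $z$ where $D=(9z^2-4)/4$, needs care), and pinning down that "there is $\mu\in\OO_D$ with $N(\mu)=\pm p$" is genuinely equivalent to "$p$ is a product of two principal primes of $\OO_D$", including the behaviour at primes dividing the conductor of $\OO_D$. A secondary point is the classical input invoked in (b): that every point of $\L$ below $3$ is realized, uniquely up to $\GL_2\ZZ$, by a Markoff irrational with the expected field of definition, together with the palindrome fact $\xi_z\sim_{\GL_2\ZZ}\bar\xi_z$ that lets us match both $\xi$ and $p\xi$ to the single quadratic irrational $\xi_z$.
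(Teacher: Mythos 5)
Your proposal is correct, but it reaches the conclusion by a different mechanism than the paper. You fix the single Markoff irrational $\xi_z$, identify the ring of integer matrices fixing $\xi_z$ with $\OO_D$ (the automorph ring of the primitive form, with determinant corresponding to norm), and translate $\lambda_z \in \L_p$ into the existence of a determinant-$\pm p$ matrix in that ring via Smith normal form; in part (b) this forces you first to collapse both $\xi$ and $p\xi$ onto $\xi_z$ using Markoff's theorem, the uniqueness supplied by the Strong Markoff Uniqueness Conjecture, and the palindromic symmetry $\xi_z \sim_{\GL_2\ZZ} \bar\xi_z$. The paper instead works throughout with ideals: it encodes the numbers with $\lambda(\xi)=\lambda_z$ as the modules $\ZZ\langle 1,\xi\rangle$ lying in the (invertible, ambiguous) ideal class $[\aa]$ attached to the Markoff form; for (a) it takes the nested pair $\pp\aa \subset \aa$ with $\pp$ a principal prime and scales so that $\aa = \langle 1,\xi\rangle$, $\pp\aa = \langle 1,p\xi\rangle$, and for (b) it recovers the principal prime directly as the quotient ideal $\pp = \aa\aa'^{-1}$ of the two index-$p$ nested representatives $\aa=\langle 1,\xi\rangle \supset \aa'=\langle 1,p\xi\rangle$, invoking the Strong Markoff Uniqueness Conjecture exactly as you do. The two routes are equivalent dictionaries (an element of norm $\pm p$ versus a principal prime of norm $p$), so neither has a substantive gap the other avoids: yours trades the paper's manipulation of ideals in a possibly non-maximal order for concrete matrix algebra plus Smith normal form, at the cost of the extra classical inputs $\xi_z \sim_{\GL_2\ZZ} \bar\xi_z$ and the $\GL_2\ZZ$-uniqueness of the Markoff class, which the ideal formulation absorbs automatically. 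The two points you flag are indeed the only ones needing care, and both go through: primitivity of the Markoff form gives the stabilizer ring $\ZZ[M_0]\cong\OO_D$ with determinant equal to norm, and an element $\mu$ with $N(\mu)=\pm p$ always yields $(p)=(\mu)(\bar\mu)$ with both factors of index $p$, hence prime, even when $p$ divides the conductor.
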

  \begin{proof}
    The quadratic form $f(x,y)$ of discriminant $D$ corresponds to an ideal class $[\aa]$ of $\OO_D$, invertible because $f$ is primitive, and also ambiguous (that is, $2$-torsion) \cite[Lemma 9]{CasselsDio}. We have $\lambda(\xi) = \lambda_z$ for only one $\GL_2 \ZZ$-class of real numbers $\xi$, namely those for which $\ZZ\<1,\xi\>$ is a representative of $[\aa]$.
    
    If $p$ is the product of two principal primes, necessarily of the form $\pp \ba \pp$, then $\aa \subset \pp\aa$ give two ideals of class $[\aa]$. We may scale the ideals so that $1$ is a primitive element of both of them and write $\aa = \<1, \xi\>$, $\pp \aa = \<1, p\xi\>$. Then $\lambda_p(\xi) = \max\{\lambda(\xi), \lambda(p\xi)\} = \max\{\lambda_z, \lambda_z\} = \lambda_z$.
    
    Conversely, suppose $\lambda_z \in \L_p$, so there is an element $\xi$ such that
    \[
    \lambda_z = \lambda_p(\xi) = \max\{\lambda(\xi), \lambda(p\xi)\}.
    \]
    By flipping $\xi \mapsto p/\xi$, we may assume that $\lambda(\xi) = \lambda_z$. Then $\lambda(p\xi) \leq \lambda_z < 3$, so $p\xi$ is also an irrational of Markoff type, corresponding to a Markoff triple $(x',y',z')$ with $z' \leq z$. But $\xi$ and $p\xi$ are defined over the same quadratic field. By the Strong Markoff Uniqueness Conjecture, this implies that $(x',y',z') = (x,y,z)$. Then the ideals
    $\aa = \<1,\xi\>$ and $\aa' = \<1, p\xi\>$ are  two representatives $\aa \supset \aa'$ of the ideal class $[\aa]$ corresponding to the Markoff form $f$. We have $\aa/\aa'$ cyclic of order $p$. Since $[\aa]$ is invertible, we may form the quotient ideal $\pp = \aa \aa'^{-1}$, a principal prime ideal of norm $p$. We have $p = \pp \ba \pp$, a product of two principal primes as desired. 
  \end{proof}
  If $\Cl(\OO_D)$ is pure $2$-torsion, then Gauss's genus theory shows that the primes $p$ that split into two principal primes in $\OO_D$ are cut out by congruence conditions mod $D$. For the first six Markoff triples, this holds, and we get the following:
  \begin{cor}~\label{cor:Markoff} The presence of points $\alpha \leq 10\sqrt{26}/17 = 2.999423\ldots$ in $\L_p$ is governed by congruence conditions:
    \begin{enumerate}[(a)]
      \item $(1,1,1)\colon$ $\sqrt{5} \in \L_p$ if and only if $p \equiv 0, \pm 1 \mod 5$.
      \item $(1,1,2)\colon$ $2\sqrt{2} \in \L_p$ if and only if $p \equiv 2, \pm 1 \mod 8$.
      \item $(1,2,5)\colon$ $\sqrt{221}/5 \in \L_p$ if and only if $p$ is a square modulo $13$ and $17$.
      \item $(1,5,13)\colon$ $\sqrt{1517}/13 \in \L_p$ if and only if $p$ is a square modulo $37$ and $41$.
      \item $(2,5,29)\colon$ $\sqrt{7565}/29 \in \L_p$ if and only if $p$ is a square modulo $5$, $17$, and $89$.
      \item $(1,13,34)\colon$ $10\sqrt{26}/17 \in \L_p$ if and only if $p \equiv \pm 1 \mod 8$ is a square modulo $13$.
    \end{enumerate}
  \end{cor}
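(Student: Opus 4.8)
The plan is to derive the corollary by specializing the preceding Proposition to the six Markoff triples with smallest maximal entry and then making the genus theory completely explicit.

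First, for each triple $(x,y,z)$ I would record $\lambda_z=\sqrt{9z^2-4}/z$ and the discriminant $D$ produced by the formula in the Proposition, namely
\[
D=5,\ 8,\ 221=13\cdot17,\ 1517=37\cdot41,\ 7565=5\cdot17\cdot89,\ 2600=2^3\cdot5^2\cdot13
\]
for (a)--(f) respectively, checking these match the listed values $\sqrt5,\ 2\sqrt2,\ \sqrt{221}/5,\ \sqrt{1517}/13,\ \sqrt{7565}/29,\ 10\sqrt{26}/17$. Then I would verify, by a finite computation, that $\Cl(\OO_D)$ is pure $2$-torsion in each case: the class numbers are $1,1,2,2,4,4$, and for the last two the group is elementary abelian $(\ZZ/2)^2$. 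In case (f) one notes that $D=2600$ is not fundamental, so $\OO_{2600}$ is the order of conductor $5$ inside the maximal order of $\QQ(\sqrt{26})$ (which has class number $2$), and one still checks $\Cl(\OO_{2600})\isom(\ZZ/2)^2$.

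Granting pure $2$-torsion, the Proposition together with the paragraph preceding the corollary gives that $\lambda_z\in\L_p$ iff $p$ is a product of two principal primes of $\OO_D$, and because the principal genus is then trivial, this holds iff every one of Gauss's assigned genus characters for $D$ takes the value $+1$ at $p$ — a congruence condition modulo $D$. The remaining work is to name those characters. For the fundamental odd discriminants $D=5,221,1517,7565$ the assigned characters are the Legendre symbols $\bigl(\tfrac pq\bigr)$ for the prime divisors $q$ of $D$ (all of which are $\equiv1\pmod4$), so the condition is exactly ``$p$ is a square modulo each such $q$''; a ramified prime $p\mid D$ contributes $p=\pp\ba\pp$ with $\pp$ principal because the corresponding ambiguous ideal class is trivial, and these boundary primes are precisely the $p\equiv0$ residues folded into the stated congruences (one checks $p=5,13,17,37,41,89$ directly). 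For $D=8$ the single character is $\bigl(\tfrac2p\bigr)$, i.e.\ $p\equiv\pm1\pmod8$, with $p=2$ ramified and principal, producing the extra residue $2\bmod8$. For $D=2600$ the $2$-part of the discriminant contributes $\bigl(\tfrac2p\bigr)$ (that is, $p\equiv\pm1\pmod8$), the prime $13$ contributes $\bigl(\tfrac p{13}\bigr)$, and one must then analyze the genus character attached to the conductor prime $5$ and determine its exact effect on the final congruence, together with the ramified prime $p=2$. Finally I would assemble the conditions and compare with (a)--(f).

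The main obstacle is case (f): because $\disc=2600$ is non-fundamental, genus theory for the form class group carries an extra character coming from the conductor $5$ (tied to the facts that $5$ splits in $\QQ(\sqrt{26})$ and the fundamental unit has norm $-1$), and correctly identifying this character and pinning down the resulting congruence is the delicate step. The other fiddly points are the $2$-adic bookkeeping — choosing the right $2$-part character for $D=8$ and $D=2600$ and incorporating the ramified prime $p=2$ — and confirming the class-number and class-group-structure facts used in the pure-$2$-torsion step.
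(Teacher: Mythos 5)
Your overall route is the same as the paper's implicit one: specialize the preceding Proposition to the six triples, check that $\Cl(\OO_D)$ is $2$-torsion for $D=5,8,221,1517,7565,2600$, and convert ``$p$ splits into two principal primes of $\OO_D$'' into genus-character conditions; the paper offers no more detail than that, and your discriminants and class-group data are the right finite verifications. Two phrasing caveats: the ``only if'' directions also use Proposition~(b), i.e.\ the Strong Markoff Uniqueness input for $z'\le z$ (for these six triples this is just the check that the six fields are pairwise distinct); and ``$\Cl(\OO_D)$ pure $2$-torsion'' is not the same as ``the principal genus is trivial'' --- the form class group is the narrow one (for $D=2600$ it is $\ZZ/2\times\ZZ/4$, with $4$ genera), and what one actually needs is that the principal genus consists of the principal class together with the class of the negative of the principal form, so that ``all assigned characters equal $+1$'' is equivalent to $p=\lvert x^2-(D/4)y^2\rvert$ (or the odd-$D$ analogue), i.e.\ to ordinary principality in $\OO_D$.

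The genuine gap is precisely the step you defer in case (f), and it does not resolve the way one would need. The conductor-$5$ character does not drop out: if $p=\lvert x^2-650y^2\rvert$ then $p\equiv\pm x^2\pmod 5$, and since $-1$ is a square mod $5$ this forces $\bigl(\tfrac{p}{5}\bigr)=1$. So executing your plan for $D=2600$ produces the condition ``$p\equiv\pm1\pmod 8$ and $p$ a square modulo both $5$ and $13$'' (density $1/8$, matching $2h(\OO_{2600})=8$), which is strictly stronger than the printed condition in (f) (density $1/4$). The printed condition is instead the splitting-into-principal-primes criterion for the maximal order $\ZZ[\sqrt{26}]$: for example $p=17$ satisfies $17\equiv1\pmod 8$ and $\bigl(\tfrac{17}{13}\bigr)=1$, and $17=\lvert 3^2-26\rvert$, but $17$ is not $\lvert x^2-650y^2\rvert$; since the Proposition's $D$ for $z=34$ is $2600$ (the discriminant of the primitive Markoff form $17x^2-30xy-25y^2$), the Proposition then gives $10\sqrt{26}/17\notin\L_{17}$. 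Hence your proposal as written cannot be assembled into (f) as stated: either the extra requirement that $p$ be a square mod $5$ must be added to (f), or one must justify replacing $\OO_{2600}$ by the maximal order, which the Proposition does not license.
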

  
  The first four of these show up frequently as $\min \L_p$. Note that the last two points can never be $\min \L_p$, because the associated congruence conditions imply the presence of an even lower point in $\L_p$: $\sqrt{5} \in \L_p$ and $2\sqrt{2} \in \L_p$ respectively.
  
  For the next few Markoff triples following the first six, the class group of $\OO_D$ is not $2$-torsion and, indeed, is expected to be large owing to the presence of a small unit
  \[
  u = \frac{3 z + \sqrt{9 z^2 - 4}}{2} \in \OO_D^\cross.
  \]
  For fixed $z$, by the Chebotarev density theorem, half of all primes split in $\OO_D$ and their factors are equidistributed in the class group of $\OO_D$. It is then perhaps not surprising that there are a good many $p$ for which $\L_p$ has no point below $3$, as we illustrate below.
  
  \section{Numerical data} \label{sec:data}
  Algorithm \ref{alg:main} allows us to compute the minimal element of $\L_p$ for any $p$, assuming the algorithm terminates. In Table \ref{tab:min_Ln}, we list the lowest value of $\L_p$ for primes $p < 2000$ not covered by Corollary \ref{cor:Markoff}, which covers a proportion $55/64 \approx 86\%$ of primes. In Figure \ref{fig:min_Ln}, we show the same data in graphical format.
  
  \begin{table}
    \begin{tabular}{r>{\raggedright\arraybackslash}p{3in}p{1.6in}l}
      $p$    & $\min \L_p = \min \M_p$      & Continued fraction & Symmetry \\ \hline
      $3   $ & $3.46410161513775 = 2\sqrt{3}$ & $[21]$ & S \\
      $67  $ & $3.67821975514076 = \sqrt{7157}/23$ & $[33211112]$ & S \\
      $163 $ & $3.42607262955615 = 3\sqrt{4853}/61$ & $[22121211]$ & R \\
      $227 $ & $3.03973683071413 = \sqrt{231}/5$ & $[22211111]$ & S \\
      $277 $ & $3.04378880403255 = 13\sqrt{29}/23$ & $[2222111]$ & S \\
      $283 $ & $3.04378880403255 = 13\sqrt{29}/23$ & $[2222111]$ & S \\
      $293 $ & $3.33732764987912 = 2\sqrt{19182}/83$ & $[2212111211]$ & R \\
      $317 $ & $3.20024999023514 = \sqrt{6401}/25$ & $[211211111]$ & S \\
      $347 $ & $3.20578401169006 = \sqrt{8643}/29$ & $[22211211111211]$ & S \\
      $397 $ & $3.11986602498262 = \sqrt{2813}/17$ & $[22221]$ & S \\
      $547 $ & $3.11986602498262 = \sqrt{2813}/17$ & $[22221]$ & S \\
      $557 $ & $3.11986602498262 = \sqrt{2813}/17$ & $[22221]$ & S \\
      $587 $ & $3.17316332974547 = 2\sqrt{2117}/29$ & $[2211211]$ & S \\
      $643 $ & $3.11986602498262 = \sqrt{2813}/17$ & $[22221]$ & S \\
      $653 $ & $3.28110118710167 = \sqrt{689}/8$ & $[22121121]$ & S \\
      $683 $ & $3.11986602498262 = \sqrt{2813}/17$ & $[22221]$ & S \\
      $773 $ & $3.20471655999477 = \sqrt{4522693909}/20985$ & $[221121^82112211122111]$ & S \\
      $827 $ & $3.22597102376446 = $ & $[22221121111$ & R \\
      & \qquad $2\sqrt{18981183427599}/2701041$ & \qquad $\allowbreak 211112211111211]$ \\
      $853 $ & $3.11735557792563 = \sqrt{1625621}/409$ & $[2222122111]$ & R \\
      $907 $ & $3.28110118710167 = \sqrt{689}/8$ & $[22121121]$ & S \\
      $947 $ & $3.32456094061007 = 2\sqrt{224439}/285$ & $[22121111121111]$ & A \\
      $997 $ & $3.04963643415425 = \sqrt{174557}/137$ & $[22211222111]$ & S \\
      $1013$ & $3.40942239592361 = 2\sqrt{64517}/149$ & $[221212211]$ & S \\
      $1093$ & $3.00461791388810 = \sqrt{29870597}/1819$ & $[2 2 1 1 2 2 1^{12}]$ & S \\
      $1123$ & $3.08622198700304 = \sqrt{574738353221}/245645$ & $[2 2 2 2 2 2 1 1 1 2 2 1 2 2 1 1 1 1 1 1]$ & R \\
      $1163$ & $3.19289664252119 = \sqrt{9797}/31$ & $[21121111]$ & S \\
      $1213$ & $3.03973683071413 = \sqrt{231}/5$ & $[22211111]$ & S\\
      $1237$ & $3.08977153564575 = \sqrt{1535117}/401$ & $[2 2 1 2 2 1 2 2 1 1]$ & S \\
      $1493$ & $3.33732764987912 = 2\sqrt{19182}/83$ & $[2212111211]$ & A \\
      $1523$ & $3.32267143334034 = \sqrt{10538139}/977$ & $[1 2 1^7 2 1 2 2 1 1 1 1 1]$ & R \\
      $1597$ & $3.16759838060491 = {}$ & $[2^{16}
      1122112112112211]$ & S \\
      & \qquad $\sqrt{11527532430881}/1071860$ \\
      $1627$ & $3.28696105669264 = \sqrt{354606557}/5729$ & $[2 2 2 2 2 1 1 1 2 1 1 2 1 2]$ & R \\
      $1637$ & $3.04378880403255 = 13\sqrt{29}/23$ & $[2222111]$ & S \\
      $1693$ & $3.19289664252119 = \sqrt{9797}/31$ & $[21121111]$ & S \\
      $1747$ & $3.08627411116922 = \sqrt{1720469}/425$ & $[221221^9]$ & S \\
      $1787$ & $3.00487903864412 = \sqrt{60713}/82$ & $[2 2 1 1 2 2 1^7]$ & S \\
      $1867$ & $3.19289664252119 = \sqrt{9797}/31$ & $[21121111]$ & S \\
      $1907$ & $3.04378880403255 = 13\sqrt{29}/23$ & $[2222111]$ & S \\
      $1933$ & $3.25087478718599 = \sqrt{5834157}/743$ & $[211121^{13}]$ & S \\
      $1987$ & $3.00022037758055 = \sqrt{27229}/55$ & $[221^7]$ & S \\
      $1997$ & $3.12428276843314 = 2\sqrt{69697}/169$ & $[222122111]$ & R
    \end{tabular}
    \label{tab:min_Ln}
    \caption{Minimum point of the $p$-Lagrange spectrum $\L_p$ for primes $p < 2000$ not covered by Corollary \ref{cor:Markoff}. For brevity, the terms $1$, $2$, $3$ of the continued fractions are written without separators, and the notation $a^b$ means that the term $a$ is repeated $b$ times.}
  \end{table}

  \begin{figure}[hbtp]
    \centering
    \begin{tikzpicture}
      \begin{axis}[
        width=\textwidth,
        xlabel={Prime $p$},
        ylabel={Lowest point $\alpha = \min\L_p = \min\M_p$},
        legend entries={%
          {Markoff (Cor.~\ref{cor:Markoff})},
          {symmetric non-Markoff (S)},
          {asymmetric alike (A)},
          {reversals (R)}
        }
        ]
        \addplot[
          scatter, only marks,
          point meta=explicit symbolic,
          scatter/classes={
            M={mark=+,mark size=2,green!70!brown},
            S={mark=diamond,mark size=3,brown},
            A={mark=square,mark size=2,blue},
            R={mark=x,mark size=3,red}
          },
        ] table [x=p,y=min, meta=sym] {lowest.dat};
      \end{axis}
    \end{tikzpicture}
    \caption{Minimum point of the $p$-Lagrange spectrum $\L_p$ for primes $p $}
    \label{fig:min_Ln}
  \end{figure}
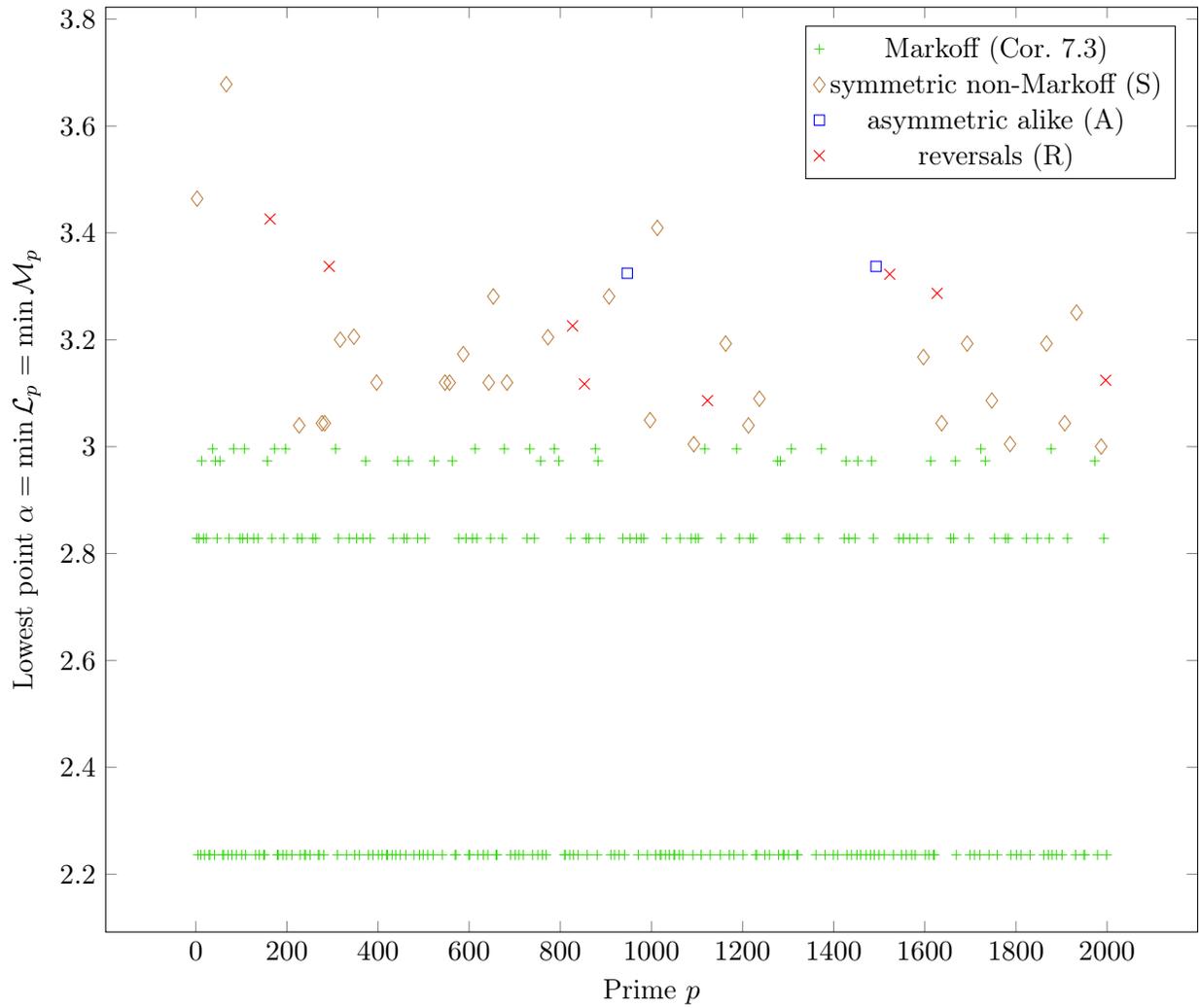
  
  In each case, the continued fraction for $\xi$ is shown (periodic part only). The continued fraction for $p\xi$ is derived by one of three symmetry relations, listed in the last column:
  \begin{itemize}
    \item The continued fraction for $\xi$ and $p\xi$ are \textbf{S}ymmetric and alike (as occurs in all the Markoff cases).
    \item The continued fractions for $\xi$ and $p\xi$ are \textbf{A}symmetric and alike.
    \item the continued fractions for $\xi$ and $p\xi$ are asymmetric and mutual \textbf{R}eversals.
  \end{itemize}
  
  We notice a few patterns. All the values shown are greater than $3$, some only slightly so; that is to say, none of the countably many Markoff numbers show up in $\L_p$. Markoff numbers beyond those covered by Corollary \ref{cor:Markoff} \emph{can} show up as $\min \L_p$, but rarely; for instance, the Markoff triple $(5,29,433)$ gives the minimum point for one out of every $512$ primes, such as
  \[
    \min \L_{3907} = 2.99999644423373236\ldots = \sqrt{9 - \frac{4}{433^2}}.
  \]

  The three largest observed values of $\min \L_p$ occur for $p = 3, 67, 163$, which are (absolute) discriminants of imaginary quadratic fields of class number $1$ (a.k.a.\ Heegner numbers). This does not seem to be entirely a coincidence, although its full significance remains unclear. In both problems the prime $p$ must satisfy the condition that certain small primes are non-squares modulo $p$. The most striking behavior occurs for $p = 67$, which is the unique $p$ found for which $\min \L_p$ lies to the right of the gap $(2\sqrt{3}, \sqrt{13})$ in $\L$ separating continued fractions built of $1$'s and $2$'s from those containing a term at least $3$. In view of the data, and by analogy with the finitude of the Heegner numbers, we conjecture:
  \begin{conj}
    $\min \L_p$ attains a maximum at $p = 67$, where $\min \L_{67} = \sqrt{7157}/23$.
  \end{conj}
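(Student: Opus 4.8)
Being a conjecture, this admits no proof here; what follows is a plan of attack. The key first move is a reduction to a clean combinatorial statement. The value $\min\L_{67}=\sqrt{7157}/23$ is already rigorous, since Algorithm \ref{alg:main} terminates for $p=67$ and by Theorem \ref{thm:alg_OK} its output constitutes a genuine proof. Now recall the gap $(2\sqrt3,\sqrt{13})$ in $\L$: a real number $\eta$ has $\lambda(\eta)<\sqrt{13}$ if and only if its continued fraction eventually has all partial quotients $\le 2$, and then $\lambda(\eta)\le 2\sqrt3$. Combined with Proposition \ref{prop:lambda_n->lambda} (so $\lambda_p(\xi)=\max\{\lambda(\xi),\lambda(p\xi)\}$ for $p$ prime), this shows $\min\L_p<\sqrt{13}$ holds exactly when there is an irrational $\xi$ with both $\xi$ and $p\xi$ eventually built from the digits $\{1,2\}$, in which case $\min\L_p\le 2\sqrt3<\sqrt{7157}/23$. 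Hence it suffices to prove that $p=67$ is the \emph{only} prime admitting no such $\xi$ --- which is Theorem \ref{thm:67} with the range $p<2000$ dropped and uniqueness asserted for all primes; this implies the conjecture, with $67$ the unique maximizer.

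A large proportion of primes is then disposed of unconditionally: by the ``if'' halves of Corollary \ref{cor:Markoff} (which do not invoke the Strong Markoff Uniqueness Conjecture), every prime in one of the listed residue classes, e.g.\ $p\equiv 0,\pm1\bmod 5$, already has a Markoff point such as $\sqrt5$ in $\L_p$, whence $\min\L_p<3$; these are $55/64\approx 86\%$ of all primes. For a prime $p$ outside these classes I would hunt for an eventually-$\{1,2\}$ pair $(\xi,p\xi)$ via the ideal-theoretic mechanism of Section \ref{sec:Markoff}: a purely periodic $\xi=[\overline{w}]$ with $w$ a word in $\{1,2\}$ determines a real quadratic discriminant $D$ and an ideal class $[\aa]$ of $\OO_D$ with $\ZZ\langle 1,\xi\rangle\sim\aa$, and (up to the flip $\xi\mapsto p/\xi$ and conjugation) $\lambda_p(\xi)\le 2\sqrt3$ holds precisely when some prime $\pp\mid p$ of $\OO_D$ makes $[\pp][\aa]$ again a class hosting a $\{1,2\}$-word. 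The plan is to fix a finite list of discriminants $D$ coming from short $\{1,2\}$-words, describe inside each $\Cl(\OO_D)$ the set $N_D$ of classes realized by $\{1,2\}$-continued fractions, and verify that the Frobenius conditions ``$[\pp]\in N_DN_D^{-1}$'' needed across the list cover every prime $p$ --- using genus theory for the small $D$ with $2$-torsion class group and Chebotarev equidistribution of $[\pp]$ for the rest.

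A purely automata-theoretic alternative avoids ideals: an eventually-$\{1,2\}$ pair $(\xi,p\xi)$ is the same as a cycle in the fast Raney transducer $\T_p$ whose input and output labels, concatenated periodically, contain no run $LLL$ or $RRR$. Equivalently, letting $\T_p^{(2)}\subseteq\T_p$ be the subgraph retaining only edges whose input and output words have no run of three equal letters and can be glued at their endpoints without creating one, we must show $\T_p^{(2)}$ contains a cycle for every $p\ne 67$. Since $\size{\DB_p}=p$ and the input labels out of each node form a base, one could attempt to force a short such cycle by pigeonhole on the $p$ nodes --- a $\{1,2\}$-walk in $\T_p$ that is long enough must revisit a node and close up --- provided one can rule out that all $\{1,2\}$-walks in $\T_p$ are uniformly short.

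The main obstacle is precisely this last point, and more broadly the lack of any description of $\T_p$ uniform in $p$: one cannot even prove that Algorithm \ref{alg:main} terminates for every prime. The ideal-theoretic route needs, for each $p$, some discriminant on the list in which $p$ is ``well split,'' and making the covering rigorous runs into the same kind of unproven input --- which ideal classes carry $\{1,2\}$-continued fractions, and a Strong-Markoff-Uniqueness-type control on collisions --- that the paper verifies only numerically; the automaton route needs a lower bound, valid for all $p\ne 67$, on the length of $\{1,2\}$-walks in $\T_p$. Even the termination conjecture of the abstract would not by itself yield a uniform-in-$p$ ceiling preventing $\min\L_p$ from drifting above $\sqrt{7157}/23$ for large $p$, so genuinely new input is required. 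I therefore expect the realistic deliverable to be the reduction of the first paragraph together with the two conditional strategies above, rather than a complete proof.
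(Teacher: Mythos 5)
This statement is a conjecture, and the paper offers no proof of it: its only support in the paper is the numerical verification for $p<2000$ (Theorem \ref{thm:main}, Theorem \ref{thm:67}) plus the informal remark about $p=67$ lying beyond the gap $(2\sqrt3,\sqrt{13})$ and the Heegner-number coincidence. You correctly refuse to claim a proof, and the parts of your write-up that are assertions rather than strategy are sound: $\min\L_{67}=\sqrt{7157}/23$ is indeed rigorous via Algorithm \ref{alg:main} and Theorem \ref{thm:alg_OK}; and the reduction, using Proposition \ref{prop:lambda_n->lambda} for $p$ prime together with the classical gap $(2\sqrt3,\sqrt{13})$ separating eventually-$\{1,2\}$ continued fractions from those with infinitely many terms $\ge 3$, correctly shows that the conjecture would follow from extending Theorem \ref{thm:67} to all primes (every $p\ne 67$ admitting such a $\xi$ forces $\min\L_p\le 2\sqrt3<\sqrt{7157}/23$). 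This reduction is essentially the paper's own implicit framing made explicit, so on that portion you are aligned with the paper; note only that it is a sufficient condition, not an equivalence --- the conjecture could survive even if some large prime also lacked a $\{1,2\}$ pair, so long as its minimum stayed below $\sqrt{7157}/23$, so you are proposing to prove something slightly stronger. Your observation that the ``if'' halves of Corollary \ref{cor:Markoff} are unconditional and dispose of about $86\%$ of primes is also correct. The two conditional strategies (class-group/Chebotarev covering, and cycles in a $\{1,2\}$-restricted subgraph of $\T_p$) go beyond anything in the paper and are reasonable directions, and you are right to flag the genuine obstructions: no description of $\T_p$ uniform in $p$, no proof that Algorithm \ref{alg:main} always terminates, and the unproven inputs of Strong-Markoff-Uniqueness type. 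In short: no gap to report, because you prove exactly what can currently be proved and clearly label the rest as a plan.
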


  The symmetry relations for $\xi$ and $n\xi$ are striking. Indeed, the S type occurs in a large majority of cases, even for some very long continued fractions where the symmetry cannot be attributed to mere chance. When the continued fraction for $\xi$ is asymmetric, the R relation is apparently more frequent than the A relation, but both occur. Note that the continued fraction $[2,2,1,2,1,1,1,2,1,1]$, with approximability value $\lambda = 2\sqrt{19182}/83$, occurs twice, once for $p = 293$ with the R relation, and once for $p = 1493$ with the A relation. We make the following conjecture (which may be hard, perhaps as hard as the Markoff uniqueness conjecture):

  \begin{conj}
    For each prime $p$, the periodic part of the continued fraction of $\xi$ attaining the minimal $\lambda_p(\xi) = \alpha$ is unique up to reversal. We have $\lambda(\xi) = \lambda(p\xi) = \alpha$, and the periodic parts of $\xi$ and $p\xi$ are either equal or mutual reversals, depending only on $p$.
  \end{conj}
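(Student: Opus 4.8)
The plan is to reformulate the statement in terms of the fast Raney transducer $\T_p$ and to isolate the single hard kernel. As in the proof of Theorem~\ref{thm:alg_OK}, an irrational $\xi$ with $\lambda_p(\xi)=\alpha=\min\L_p$ corresponds, up to a transient prefix (equivalently, up to $\Gamma^0(p)$), to a cycle $P$ on $\T_p$ whose concatenated input and output labels, continued periodically, are precisely the periodic parts of the continued fractions of $\xi$ and of $p\xi$; and $\alpha=\lambda_p(\xi)=\max\{\lambda(\xi),\lambda(p\xi)\}$ by Proposition~\ref{prop:lambda_n->lambda}. Call such a $P$ an \emph{optimal cycle} and write $I_P^\infty,\,O_P^\infty$ for the two periodic $LR$-sequences it produces, so that $\lambda(I_P^\infty)=\lambda(\xi)$ and $\lambda(O_P^\infty)=\lambda(p\xi)$ are both at most $\alpha$ with maximum $\alpha$. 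I would aim to derive the entire conjecture from the single assertion
\[
\text{(U)}\qquad\text{all optimal cycles $P$ have the same $I_P^\infty$, up to cyclic rotation and reversal.}
\]

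Granting (U), the rest is finitary bookkeeping on periodic words, resting on two features of $\T_p$. First, $\T_p$ is deterministic: the input labels leaving any node form a base for $LR$-sequences by construction, so a bi-infinite periodic $LR$-input determines its walk, and hence its output, uniquely up to rotation. Second, Raney's transducer admits an arrow-reversing automorphism $\tau$, acting on nodes by $\bigl[\begin{smallmatrix}a&b\\c&d\end{smallmatrix}\bigr]\mapsto\bigl[\begin{smallmatrix}d&b\\c&a\end{smallmatrix}\bigr]$, which interchanges the input and output label of every edge and reverses the letter order of each; being a permutation of the edges it carries cycles to cycles, and on an optimal $P$ it yields an optimal cycle $P^\tau$ with $I_{P^\tau}^\infty=\operatorname{rev}(O_P^\infty)$ and $O_{P^\tau}^\infty=\operatorname{rev}(I_P^\infty)$, using only that $\lambda$ of a periodic continued fraction is invariant under rotation and reversal of the period (immediate from \eqref{eq:periodic}). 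Applying (U) to $P^\tau$ forces $\operatorname{rev}(O_P^\infty)$, hence $O_P^\infty$, to be a rotation of $I_P^\infty$ or of its reversal — this is the ``equal or mutual reversal'' clause — and in particular $\lambda(p\xi)=\lambda(O_P^\infty)=\lambda(I_P^\infty)=\lambda(\xi)$, which with $\max\{\lambda(\xi),\lambda(p\xi)\}=\alpha$ gives $\lambda(\xi)=\lambda(p\xi)=\alpha$. The remaining ``depending only on $p$'' clause follows from a short case analysis on the inputs of two optimal cycles (which lie in one reversal-orbit by (U)): two optimal cycles with cyclically equal inputs have cyclically equal outputs by determinism, and the only other case is disposed of by applying $\tau$, which visibly preserves whether $O_P^\infty$ is a rotation of $I_P^\infty$.

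The whole weight thus falls on (U): the lowest point of $\L_p$ is realized by an essentially unique infinite walk on $\T_p$. This is the step I expect to be the main obstacle, and nothing in the present toolkit seems to touch it. Even its classical shadow — that $\min\L=\sqrt5$ is attained only by the golden ratio up to $\GL_2\ZZ$ — is the base case of Markoff's analysis, and for most primes $\min\L_p$ lies inside or above the fractal part of $\L$, where deciding which continued fractions achieve a prescribed approximability is exactly the province of the Markoff uniqueness conjecture; so the authors' remark that proving this ``may be hard, perhaps as hard as the Markoff uniqueness conjecture'' is, on this analysis, quite literal. A realistic intermediate target would be to prove (U) conditionally on the output of Algorithm~\ref{alg:main}: applying the cycle-surgery technique of Section~\ref{sec:closed} to pairs of optimal cycles might reduce (U), for each fixed $p$, to a finite check on the finitely many optimal cycles the algorithm returns — upgrading the per-prime computational verification to a per-prime structural proof, even if a uniform statement over all $p$ stays out of reach.
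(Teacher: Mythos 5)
The statement you are trying to prove is not a theorem of the paper at all: it is stated as a conjecture, offered with no proof and with the authors' explicit caveat that it ``may be hard, perhaps as hard as the Markoff uniqueness conjecture.'' Your proposal does not close that gap, and you say so yourself: everything is funnelled into assertion (U), which is left unproven. But note that (U) --- all optimal cycles have the same input word up to rotation and reversal --- is, under the cycle/continued-fraction dictionary you set up, literally the first clause of the conjecture (``the periodic part of the continued fraction of $\xi$ attaining the minimum is unique up to reversal''). So the proposal is best described as deriving the second and third clauses from the first, not as a proof or even a reduction to something genuinely smaller; the entire content of the statement remains untouched, exactly where the authors located the difficulty. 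Two further assumptions you make are themselves conditional: that every minimizer corresponds to a \emph{cycle} on $\T_p$ is Conjecture \ref{conj:main} (or termination of Algorithm \ref{alg:main}) rather than a known fact, and the duality $\tau$ you invoke (nodes $\bigl[\begin{smallmatrix}a&b\\c&d\end{smallmatrix}\bigr]\mapsto\bigl[\begin{smallmatrix}d&b\\c&a\end{smallmatrix}\bigr]$, arrows reversed, input and output swapped and reversed) is true --- it comes from $M\mapsto JM^{T}J$ with $J=\bigl[\begin{smallmatrix}0&1\\1&0\end{smallmatrix}\bigr]$, which fixes $L$ and $R$ and anti-commutes with products --- but it needs to be verified (or cited from Raney) that it actually permutes the \emph{edges} of the fast transducer, since the paper never states this outside a commented-out remark.

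There is also a soft spot in the derivation you do carry out. The ``depending only on $p$'' clause is argued via ``two optimal cycles with cyclically equal inputs have cyclically equal outputs by determinism,'' but determinism of $\T_p$ fixes the walk only once the starting node is chosen: a bi-infinite periodic input can in principle be read around cycles based at two different doubly balanced nodes (already $L^{\infty}$ is read by loops at both hub nodes of $\T_2$ and $\T_3$), and then nothing immediate forces the two outputs to agree up to rotation, so one of them could be ``alike'' and the other a strict reversal even granting (U). Your closing suggestion --- using the algorithm's output to turn the per-prime computation into a per-prime structural proof of (U) --- is reasonable and is essentially how Theorem \ref{thm:alg_OK} certifies the list of optimal cycles for each $p$ that was actually run; but as a proof of the conjecture as stated (for each prime $p$, unconditionally), the proposal has a genuine and acknowledged gap.
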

    
\bibliography{biblio}
  \bibliographystyle{plain}
  
\end{document}